\documentclass[10pt]{amsart}
\usepackage{amssymb,amsbsy,amsmath,amsfonts,amssymb,amscd}
\usepackage{latexsym,euscript,exscale}
\title[Generic measure preserving homeomorphisms and isometries]{The generic isometry and measure preserving homeomorphism are conjugate to their powers}
\author {Christian Rosendal}
\address{Department of Mathematics, Statistics, and Computer Science (M/C 249)\\
University of Illinois at Chicago\\
851 S. Morgan St.\\
Chicago, IL 60607-7045\\
USA}
\email{rosendal@math.uic.edu}
\urladdr{http://www.math.uic.edu/$_~$rosendal}
\linespread {1}

\newcommand {\ca} {{2^\N}}

\newcommand {\A}{{\mathbf A}}
\newcommand {\B}{{\mathbf B}}
\newcommand {\D}{{\mathbf D}}
\newcommand{\F}{\mathbb F}

\newcommand{\aaa}{\go A}
\newcommand {\N}{\mathbb N}
\newcommand {\Q}{\mathbb Q}
\newcommand {\R}{\mathbb R}
\newcommand {\Z}{\mathbb Z}
\newcommand {\C}{{\mathbf C}}
\newcommand {\U}{\mathbb U}

\renewcommand{\leq}{\ensuremath{\leqslant}}
\renewcommand{\geq}{\ensuremath{\geqslant}}

\newcommand {\E}{\mathbf E}

\newcommand{\eps}{\epsilon}

\newcommand{\iso}{\cong}

\newcommand{\tom} {\emptyset}

\newcommand{\inj}{\hookrightarrow}

\newcommand{\equi}{\Leftrightarrow}

\newcommand{\til}{\rightarrow}

\newcommand{\Lim}[1]{\mathop{\longrightarrow}\limits_{#1}}

\newcommand {\del}{ \; \big| \;}

\newcommand {\og}{\; \& \;}

\newcommand {\go} {\mathfrak}
\newcommand {\ku} {\mathcal}

\newcommand{\inv}{^{-1}}

\newcommand {\e} {\exists}
\renewcommand {\a} {\forall}

\newtheorem{thm}{Theorem}
\newtheorem{cor}[thm]{Corollary}
\newtheorem{lemme}[thm]{Lemma}
\newtheorem{prop} [thm] {Proposition}

\begin{document}
\thanks{Research partially supported by NSF grant DMS 0556368.}
\subjclass[2000]{03E15, 37A05}

\keywords{Measure preserving homeomorphism, Cantor space, Isometry, Urysohn space, Rohlin's Lemma}

\begin{abstract}It is known  that there is a
comeagre set of mutually conjugate measure preserving homeomorphisms
of Cantor space equipped with the coinflipping probability measure, i.e., Haar measure.
We show that the generic measure preserving homeomorphism is
moreover conjugate to all of its powers. It follows that the generic
measure preserving homeomorphism extends to an action of $(\Q,+)$ by
measure preserving homeomorphisms, and, in fact, to an action of the locally compact ring $\aaa$ of finite ad\`eles.

Similarly, S. Solecki has proved that there is a comeagre set of mutually conjugate isometries of the rational Urysohn metric space. We prove that these are all conjugate with their powers and therefore also embed into  $\Q$-actions. In fact, we extend these actions to actions of $\aaa$ as in the case of measure preserving homeomorphisms.

We also consider a notion of topological similarity in Polish groups and use this to give simplified proofs of the meagreness of conjugacy classes in the automorphism group of the standard probability space and in the isometry group of the Urysohn metric space.
\end{abstract}

\maketitle

\tableofcontents

\section{Introduction}

Suppose $M$ is a compact metric space and let ${\rm Homeo}(M)$ be its group of homeomorphisms. We equip ${\rm Homeo}(M)$ with the topology of uniform convergence or what is equivalent, since $M$ is compact metric, the compact-open topology. Thus, in this way, a neighbourhood basis at the identity consists of the sets
$$
\{h\in {\rm Homeo}(M)\del h(C_1)\subseteq V_1\;\&\;\ldots\;\&\; h(C_n)\subseteq V_n\},
$$
where $V_i\subseteq M$ are open and $C_i\subseteq V_i$ compact. Under this topology the group operations are continuous and thus ${\rm Homeo}(M)$ is a topological group. Moreover, the topology is {\em Polish}, that is, ${\rm Homeo}(M)$ is separable and its topology can be induced by a complete metric.

Now consider the case when $M$ is Cantor space $\ca$. Then, as any two disjoint closed sets in $\ca$ can be separated by a clopen set, we get a neighbourhood basis at the identity consisting of sets of the form
$$
\{h\in {\rm Homeo}(\ca)\del h(C_1)=C_1\;\&\;\ldots\;\&\; h(C_n)=C_n\},
$$
where $C_1,\ldots,C_n\subseteq \ca$ is a partition of $\ca$ into clopen sets.

By Stone duality, the homeomorphisms of Cantor space are just the automorphisms of the Boolean algebra of clopen subsets of $\ca$, which we denote by $\B_\infty$. Thus, viewed in this way, the neighbourhood basis at the identity has the form
$$
\{h\in {\rm Homeo}(\ca)\del h|_\C={\rm id}_\C\}
$$
where $\C$ is a finite subalgebra of $\B_\infty$.

Cantor space $\ca$ is of course naturally homeomorphic to the Cantor group $(\Z_2)^\N$ and therefore comes equipped with Haar measure $\mu$. Up to a homeomorphism of Cantor space, $\mu$ is the unique atomless Borel probability measure on $\ca$ such that
\begin{itemize}
\item if $C\in \B_\infty$, then $\mu(C)$ is a {\em dyadic rational}, i.e., of the form $\frac n{2^k}$,
\item if $C\in \B_\infty$ and $\mu(C)=\frac n{2^k}>0$, then for all $l\geq k$, there is some clopen $B\subseteq C$ such that $\mu(B)=\frac 1{2^l}$,
\item if $\tom\neq C\in \B_\infty$, then $\mu (C)> 0$.
\end{itemize}
The measure $\mu$ is of course the product probability measure of the coinflipping measure on each factor $2=\{0,1\}$. For simplicity, we call $\mu$ Haar measure on $\ca$.

One easily sees that the group of Haar measure preserving homeomorphisms ${\rm Homeo}(\ca,\mu)$ of $\ca$ is a closed subgroup of ${\rm Homeo}(\ca)$ and therefore a Polish group in its own right.
It was proved by A. S. Kechris and C. Rosendal in \cite{turbulence} that there are comeagre conjugacy classes in both ${\rm Homeo}(\ca)$ and ${\rm Homeo}(\ca,\mu)$. In fact, the result for ${\rm Homeo}(\ca,\mu)$ is rather simple and also holds for many other sufficiently homogeneous measures on $\ca$ (see E. Akin \cite{akin}). This result allows us to refer to the {\em generic} measure preserving homeomorphism of Cantor space (with Haar measure), knowing that generically they are all mutually conjugate. One of the aims of this paper is to show that the generic measure preserving homeomorphism is conjugate to its non-zero powers, which in turn will show that it is a part of an action of the additive group $(\Q, +)$ by measure preserving homeomorphisms of $\ca$.

In one sense this is an optimal result, as we cannot extend these actions of $(\Q,+)$ to actions of $(\R,+)$. For, as ${\rm Homeo}(\ca,\mu)$ is totally disconnected, there are no non-trivial continuous homomorphism (or even measurable homomorphisms) from $\R$ into ${\rm Homeo}(\ca,\mu)$, and thus $\R$ cannot act non-trivially by (measure preserving) homeomorphisms on $\ca$.
However, we shall see that the generic measure preserving homeomorphism generates a closed subgroup of ${\rm Homeo}(\ca,\mu)$, that is topologically isomorphic to the profinite completion of the integers, and this allows us to extend this group within ${\rm Homeo}(\ca,\mu)$ to the additive group $(\aaa,+)$ of the locally compact ring $\aaa$ of finite ad\`eles by carefully adding roots.

Our result is the natural analogue of a result due to T. de la Rue and J. de Sam Lazaro \cite{rue} stating that for the generic element $g\in {\rm Aut}([0,1],\lambda)$ there is a continuous homomorphism $\phi\colon \R\til {\rm Aut}([0,1],\lambda)$ such that $\phi(1)=g$, i.e., that the generic measure preserving transformation is in the image of a $1$-parameter subgroup. Of course, our group ${\rm Homeo}(\ca,\mu)$ sits inside ${\rm Aut}([0,1],\lambda)$ as a dense subgroup, but the topology on ${\rm Homeo}(\ca,\mu)$ is much finer than that induced from ${\rm Aut}([0,1],\lambda)$, and there seems to be no way of directly relating the two results. 

Our result also gives hope that one could develop some rudimentary ad\`elic Lie theory in ${\rm Homeo}(\ca,\mu)$, since our result implies that there is a rich supply of $1$-parameter ad\`elic subgroups of ${\rm Homeo}(\ca,\mu)$. There have been many attempts of expanding Lie theory to a more general context of topological groups, e.g., W. Wojty\'nski \cite{ww}, but there are also hindrances to this for the groups treated in this paper. For example, almost all of the non-trivial properties developed in \cite{ww} depend on the topological group being {\em analytic}, i.e., that the intersection of the closed central descending sequence is trivial. In our case, however, every element of ${\rm Homeo}(\ca,\mu)$ is a commutator and so all terms of the central descending sequence are just ${\rm Homeo}(\ca,\mu)$ itself. Nevertheless, it would be interesting to see if alternative developments are possible. This would certainly also provide a strong external motivation for expanding the ideas presented here.

\

The {\em Urysohn metric space} $\U$ is a universal separable metric space first constructed by P. Urysohn in the posthomously published
\cite{ury}. It soon went out of fashion following the discovery that many separable Banach spaces are already universal separable metric spaces, but has come to the forefront over the last twenty years as an analogue of  Fra\"iss\'e theory in the case of metric spaces.

The Urysohn space $\U$ is characterised up to isometry by being separable and complete, together with the  following extension property.
\begin{quotation}
If $\phi\colon\A\til \U$ is an isometric embedding of a
finite metric space $\A$ into $\U$ and $\B=\A\cap \{y\}$ is a one
point metric extension of $\A$, then $\phi$ extends to an isometric
embedding of $\B$ into $\U$.
\end{quotation}
There is also a rational variant of $\U$ called the {\em rational Urysohn metric space}, which we denote by $\Q\U$. This is, up to isometry, the unique countable metric space with only rational distances such that the following variant of the above extension property holds.
\begin{quotation}
If $\phi\colon\A\til \Q\U$ is an isometric embedding of a
finite  metric space $\A$ into $\Q\U$ and $\B=\A\cap \{y\}$ is a one
point metric extension of $\A$ whose metric only takes rational distances, then $\phi$ extends to an isometric
embedding of $\B$ into $\Q\U$.
\end{quotation}
We denote by ${\rm Iso}(\Q\U)$ and ${\rm Iso}(\U)$ the isometry groups of $\Q\U$ and $\U$ respectively. These are Polish groups when equipped with the topology of pointwise convergence on $\Q\U$ seen as a discrete set and $\U$ seen as a metric space respectively. Thus, the basic neighbourhoods of the identity in ${\rm Iso}(\Q\U)$ are of the form
$$
\{h\in {\rm Iso}(\Q\U)\del h|_\A={\rm id}_\A\},
$$
where $\A$ is a finite subset of $\Q\U$, while, on the other hand, the basic open neighbourhoods of the identity in ${\rm Iso}(\U)$ are of the form
$$
\{h\in {\rm Iso}(\U)\del \a x\in \A\; d(hx,x)<\eps\},
$$
where $\A$ is a finite subset of $\U$ and $\eps>0$.

In \cite{solecki} S. Solecki proved, building on work of B. Herwig and D. Lascar \cite{herwig}, the following result.
\begin{thm}[S. Solecki \cite{solecki}]\label{solecki}
Let $\A$ be a finite rational metric space. Then there is a finite rational metric space $\B$ containing $\A$ and such that any partial isometry of $\A$ extends to a full isometry of $\B$.
\end{thm}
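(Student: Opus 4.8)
The plan is to derive this from the Herwig--Lascar theorem on extending partial automorphisms for classes of finite relational structures defined by forbidding finitely many homomorphic images, after recasting finite rational metric spaces as relational structures. First I would reduce to integer distances: if $N$ is a common denominator of the finitely many distances occurring in $\A$, then $N\cdot d_\A$ is integer-valued with all distances bounded by $K:=N\cdot\mathrm{diam}(\A)$, and a solution $\B$ for $(\A,N\cdot d_\A)$ rescales back to a solution for $\A$, since rescaling a metric does not change which maps are (partial) isometries. So I may assume that all distances lie in $\{1,\dots,K\}$.

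To such a space associate the structure in the finite relational language $L=\{R_1,\dots,R_K\}$ with $R_i(x,y)$ read as ``$d(x,y)=i$''. Then partial isometries of a metric space are precisely the partial $L$-automorphisms of the associated structure, so the theorem becomes an instance of the extension property for partial automorphisms. The key point is that once distances are bounded by $K$ the triangle inequality becomes a \emph{finitary} constraint: let $\mathcal{T}$ be the set of ``bad cycles'', i.e.\ $L$-structures on vertices $v_0,\dots,v_n$ carrying $R_{i_1}(v_0,v_1),\dots,R_{i_n}(v_{n-1},v_n)$ and $R_{i_0}(v_0,v_n)$ with $i_0>i_1+\dots+i_n$, together with the one-point structures $R_i(v,v)$. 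This set is finite, because $i_0\le K$ and $i_1+\dots+i_n\ge n$ force $n<K$. Every finite metric space with distances at most $K$ omits all homomorphic images of members of $\mathcal{T}$, so $\A$ lies in the class $\mathrm{Forb}_h(\mathcal{T})$ of finite $L$-structures into which no member of $\mathcal{T}$ maps homomorphically; the Herwig--Lascar theorem then yields a finite structure $\B_0\in\mathrm{Forb}_h(\mathcal{T})$ in which $\A$ sits as an induced substructure and every partial automorphism of $\A$ extends to an automorphism of $\B_0$.

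It remains to \emph{metrize} $\B_0$, which need not itself be a metric space (some pairs may carry no relation, or several). On the underlying set of $\B_0$ I would set $d_\B(x,y)=\min\bigl(K,\ell(x,y)\bigr)$, where $\ell(x,y)$ is the length of the shortest $R$-path from $x$ to $y$, an $R_i$-edge contributing $i$, with $\ell=\infty$ when no path exists. One checks that truncating the graph distance at $K$ gives a metric (truncation at a constant preserves the triangle inequality); that $d_\B$ agrees with $d_\A$ on $\A$, the inequality $\ell(a,a')\le d_\A(a,a')$ coming from the direct edge present in $\B_0$ while $\ell(a,a')<d_\A(a,a')$ together with that edge would exhibit a homomorphic image of a bad cycle in $\mathcal{T}$, contradicting $\B_0\in\mathrm{Forb}_h(\mathcal{T})$; and that any automorphism of the structure $\B_0$ preserves every $R_i$, hence all path-lengths, hence is an isometry of $(\B_0,d_\B)$. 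Rescaling by $1/N$ returns the desired finite rational metric space.

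The deep input, and the main obstacle, is the Herwig--Lascar theorem itself, whose proof is delicate; one genuinely needs the version that forbids homomorphic rather than merely embedded images (a repeated vertex along the shortcutting path produces only a homomorphism, not an embedding, of a bad cycle), and one must attend to the usual bookkeeping --- irreflexivity and symmetry of the $R_i$ --- when passing between metric spaces and $L$-structures. Granting that theorem, the only subtlety on our side is the metrization step, where it is essential that $\mathcal{T}$ contain \emph{all} short bad cycles, so that no shortest path in $\B_0$ can shortcut a distance already realized in $\A$. As a sanity check, when all distances of $\A$ lie within a factor of two of one another the set $\mathcal{T}$ is empty, the triangle inequality holds automatically, and the argument reduces to the extension property for complete edge-colourings.
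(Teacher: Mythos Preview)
The paper does not prove this theorem at all; it is quoted as a result of Solecki (building on Herwig--Lascar) and used as a black box. Your outline is essentially Solecki's own argument: reduce to bounded integer distances, encode as a finite relational structure, observe that the triangle inequality amounts to forbidding homomorphic images of finitely many ``bad cycles'', invoke Herwig--Lascar, and then metrize the output by truncated path-length. The sketch is correct, including the crucial observation that one needs the homomorphism-forbidding form of Herwig--Lascar (since a shortcutting path may revisit vertices) and that truncation at $K$ preserves the triangle inequality.
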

This is turn has the consequence that ${\rm Iso}(\Q\U)$ has a comeagre conjugacy class and we can therefore refer to its elements as {\em generic} isometries of $\Q\U$. The second aim of our paper is to prove that these are all conjugate to their non-zero powers, which again suffices to show that they  all are part of an action of the additive group $(\Q,+)$ by isometries of $\Q\U$. Again, by extra care in this construction, we extend this action to an action of the locally compact ring $\aaa$.

\

In the last section we briefly consider a coarse notion of conjugacy in Polish groups. We say that $f$ and $g$ belonging to a Polish group $G$ are {\em topologically similar} if for all increasing sequences $(s_n)$ we have $f^{s_n}\Lim{n\til \infty} 1$ if and only if $g^{s_n}\Lim{n\til \infty}e$. As opposed to automorphism groups of countable structures there tend not to be comeagre conjugacy classes in large connected Polish groups and we shall provide new simple proofs of this for ${\rm Aut}([0,1],\lambda)$ and ${\rm Iso}(\U)$ by showing that in fact their topological similarity classes are meagre.

\

\noindent {\bf Acknowledgement}: I would like to thank J. Melleray, S. Solecki, R. Takloo-Bighash, and S. Thomas for many helpful discussions on the topic of this paper and, in particular, I would like to thank the anonymous referee for  many insightful remarks and especially for suggesting the relevance of the group of $p$-adic numbers, which led me to the results of Sections  \ref{adic1} and \ref{adic2}.

\section{Powers of generic measure preserving homeomorphisms}

\subsection{Free amalgams of measured Boolean algebras}
We first review the notion of free amalgams of Boolean algebras, as this will be the basis for our construction later on.
Suppose $\B_1, \B_2,\ldots, \B_n$ are finite Boolean algebras
containing a common subalgebra $\A$. We define the {\em free
amalgam}
$$
\otimes_\A^l\B_l=\B_1\otimes_\A\B_2\otimes_\A\ldots\otimes_\A\B_n
$$ of $\B_1,\ldots, \B_n$ over $\A$ as follows.

By renaming, we can suppose that $\B_i\cap \B_j=\A$ for all $i\neq
j$. We then take as our atoms the set of formal products
$$
b_1\otimes\ldots\otimes b_n,
$$
where each $b_i$ is an atom in $\B_i$ and such that for some atom
$a$ of $\A$ we have $b_i\leq a$ for all $i$. Also, for simplicity,
if $c_i\in \B_i$ is not necessarily an atom, but nevertheless we
have some atom $a$ of $\A$ such that $c_i\leq a$ for all $i$, we
write
$$
c_1\otimes\ldots\otimes c_n=\bigvee\{b_1\otimes\ldots\otimes b_n\del
b_i\text{ is an atom in }\B_i \text { and } b_i\leq c_i\}.
$$
We can now embed each $\B_i$ into $\otimes_\A^l\B_l$ by defining for
each  $b\in \B_i$, minorising an atom $a\in \A$,
$$
\pi_i(b)=a\otimes\ldots\otimes a\otimes b\otimes
a\otimes\ldots\otimes a,
$$
where the $b$ appears in the $i$'th position. In particular,
$$
\pi_i(a)=a\otimes\ldots \otimes a
$$
for all atoms $a$ of $\A$. Thus, for each $i$, $\pi_i\colon \B_i\inj
\otimes_\A^l\B_l$ is an embedding of Boolean algebras and if
$\iota_i\colon\A\inj \B_i$ denotes the inclusion mapping, then the
following diagram commutes
$$
\begin{CD}
\A @>\iota_i>> \B_i\\
@V\iota_jVV   @VV\pi_iV\\
\B_j@>>\pi_j> \otimes_\A^l\B_l
\end{CD}
$$

When $\A$ is the trivial subalgebra $\{0,1\}$, we shall write $\B_1\otimes\B_2\otimes\ldots\otimes\B_n$ instead of $\B_1\otimes_\A\B_2\otimes_\A\ldots\otimes_\A\B_n$.

Now, if $\mu_i$ are  measures on $\B_i$ agreeing on $\A$, then we
can define a new measure $\mu$ on $\otimes_\A^l\B_l$ by setting for
all $b_i\in \B_i$, minorising the same atom $a\in \A$,
$$
\mu(b_1\otimes\ldots\otimes
b_n)=\frac{\mu_1(b_1)\cdots\mu_n(b_n)}{\mu_1(a)^{n-1}}.
$$
Thus,
\begin{align*}
\mu(\pi_i(b))&=\mu(a\otimes\ldots\otimes a\otimes b\otimes a\otimes
\ldots\otimes
a)\\
&=\frac{\mu_1(a)\cdots\mu_{i-1}(a)\mu_i(b)\mu_{i+1}(a)\cdots\mu_n(a)}{\mu_1(a)^{n-1}}\\
&=\frac{\mu_1(a)\cdots\mu_{1}(a)\mu_i(b)\mu_{1}(a)\cdots\mu_1(a)}{\mu_1(a)^{n-1}}\\
&=\mu_i(b).
\end{align*}
So $\pi_i\colon (\B_i,\mu_i)\til (\otimes_\A^l\B_l,\mu)$ is an
embedding of measured Boolean algebras.

A special case is when $\A$ and each $\B_i$ are {\em equidistributed
dyadic} algebras, i.e., have $2^k$ atoms  each of measure $2^{-k}$
for some $k\geq 0$. Then this implies that for each $i$, all atoms
of $\A$ are the join of the same number of atoms of $\B_i$, namely,
$2^{k_i-m}$, where $\A$ has $2^m$ atoms and $\B_i$ has $2^{k_i}$
atoms. In this case, one can verify that $\otimes_\A^l\B_l$ has
$2^{k_1+\ldots+k_n-(n-1)m}$ atoms each of measure
$2^{(n-1)m-k_1-\ldots-k_n}$. So again this is an equidistributed
dyadic algebra.

A similar construction works for {\em equidistributed } algebras,
i.e., those having a finite number of atoms of the same (necessarily
rational) measure. In this case, the amalgam is also
equidistributed.

There is, of course, a well known graphical representation of the amalgamated product of two Boolean algebras, which is useful for guiding the intuition. For example, consider an amalgam of two measured Boolean algebras $\B$ and $\C$ over a common subalgebra $\A$ with atoms $a_1,\ldots,a_4$ and where we have made explicit the atoms of $\B\otimes_\A\C$ below $a_1\otimes a_1$:
\setlength{\unitlength}{.35cm}
\begin{center}
\begin{picture}(22,21)

\put(0,0){\line(1,0){20}}
\put(0,0){\line(0,1){20}}
\put(20,0){\line(0,1){20}}
\put(0,20){\line(1,0){20}}

\put(8,0){\line(0,1){10}}
\put(10,8){\line(0,1){6}}
\put(14,10){\line(0,1){10}}

\put(0,8){\line(1,0){10}}
\put(8,10){\line(1,0){6}}
\put(10,14){\line(1,0){10}}

\put(10,-3.7){$\B$}
\put(24,10){$\C$}

\put(2,8.5){$\scriptstyle a_1\otimes a_1$}
\put(6.5,10.5){$\scriptstyle a_2\otimes a_2$}
\put(8.5,14.5){$\scriptstyle a_3\otimes a_3$}
\put(10.5,17){$\scriptstyle a_4\otimes a_4$}

\put(4,-1){$a_1$}
\put(9,-1){$a_2$}
\put(12,-1){$a_3$}
\put(17,-1){$a_4$}

\put(21,4){$a_1$}
\put(21,9){$a_2$}
\put(21,12){$a_3$}
\put(21,17){$a_4$}

\put(4,0){\line(0,1){8}}
\put(0,3){\line(1,0){8}}
\put(0,6){\line(1,0){8}}

\put(12,10){\line(0,1){4}}
\put(10,11){\line(1,0){4}}
\put(10,12){\line(1,0){4}}
\put(10,13){\line(1,0){4}}
\put(16,14){\line(0,1){6}}
\put(18,14){\line(0,1){6}}
\put(14,18){\line(1,0){6}}
\put(9,8){\line(0,1){2}}
\put(13,10){\line(0,1){4}}

\put(0.5,1.5){$\scriptstyle b_1\otimes c_1$}
\put(4.5,1.5){$\scriptstyle b_2\otimes c_1$}

\put(0.5,4.4){$\scriptstyle b_1\otimes c_2$}

\put(4.5,4.4){$\scriptstyle b_2\otimes c_2$}

\put(0.5,6.8){$\scriptstyle b_1\otimes c_3$}

\put(4.5,6.8){$\scriptstyle b_2\otimes c_3$}

\linethickness{1.5pt}
\put(20.3,8){\line(-1,0){.6}}
\put(20.3, 10){\line(-1,0){.6}}
\put(20.3,14){\line(-1,0){.6}}

\put(8,-0.3){\line(0,1){.6}}
\put(10,-.3){\line(0,1){.6}}
\put(14,-.3){\line(0,1){.6}}

\end{picture}
\end{center}

\

\

\

\

In general, an automorphism of a finite Boolean algebra arises from
a permutation of the atoms, but in the case of equidistributed, resp.
dyadic equidistributed, algebras, any permutation of the atoms conversely gives rise to a
measure preserving automorphism. Thus, for equidistributed algebras
an automorphism is necessarily a measure preserving automorphism and
we can therefore be a bit forgetful about the measure.

Suppose $\A$ is an equidistributed Boolean algebra. By a {\em partial automorphism} of $\A$ we understand an isomorphism $\phi\colon \B\til \C$ between two subalgebras $\B$ and $\C$ of $\A$ preserving the measure.

\begin{lemme}
Let $\A$ be an equidistributed, resp. dyadic equidistributed, finite Boolean algebra. Then
any measure preserving partial automorphism of $\A$ extends to an
automorphism of $\A$.
\end{lemme}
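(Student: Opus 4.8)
The plan is to reduce the statement to the purely combinatorial fact that a partial permutation of a finite set extends to a total permutation. Let $\phi\colon\B\til\C$ be a measure preserving isomorphism between subalgebras $\B,\C$ of $\A$. Since $\A$ is equidistributed (say with $N$ atoms, each of measure $\frac 1N$), every subalgebra of $\A$ is automatically a ``sub-partition'' algebra: its atoms are joins of atoms of $\A$, and because all atoms of $\A$ have equal measure, an atom $b$ of $\B$ has measure $\frac{|b|}N$ where $|b|$ denotes the number of atoms of $\A$ below $b$. The measure preserving condition on $\phi$ therefore says exactly that $|\phi(b)|=|b|$ for every atom $b$ of $\B$; that is, $\phi$ matches atoms of $\B$ with atoms of $\C$ of the same cardinality (as sets of atoms of $\A$).

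First I would fix, for each atom $b$ of $\B$, an arbitrary bijection $\sigma_b$ from the set of atoms of $\A$ below $b$ onto the set of atoms of $\A$ below $\phi(b)$; this is possible precisely because these two sets have the same size. The atoms of $\B$ partition the whole atom set of $\A$ (as $\B$ contains $1_\A=1$), and likewise the atoms of $\C$ partition it; hence the union $\sigma=\bigcup_b\sigma_b$ is a well-defined bijection of the atom set of $\A$ onto itself. Any permutation of the atoms of $\A$ induces an automorphism $\hat\sigma$ of $\A$, and since $\A$ is equidistributed this automorphism automatically preserves $\mu$ (in the dyadic equidistributed case the same remark applies, as was noted just before the lemma). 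Finally I would check that $\hat\sigma$ extends $\phi$: for an atom $b$ of $\B$, $\hat\sigma(b)=\bigvee\{\sigma(a)\del a\text{ atom of }\A,\ a\leq b\}=\bigvee\{\text{atoms of }\A\text{ below }\phi(b)\}=\phi(b)$, and since both $\hat\sigma$ and $\phi$ are join-preserving and every element of $\B$ is a join of atoms of $\B$, agreement on atoms gives $\hat\sigma|_\B=\phi$.

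There is essentially no hard step here; the only thing to be careful about is the reduction in the first paragraph, namely observing that in an equidistributed algebra ``measure preserving'' for a partial automorphism is equivalent to ``atom-count preserving,'' so that the local bijections $\sigma_b$ exist. Everything else is the standard correspondence between automorphisms of a finite Boolean algebra and permutations of its atoms, together with the already-recorded fact that for equidistributed (or dyadic equidistributed) algebras every atom permutation is measure preserving.
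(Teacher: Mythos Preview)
Your proposal is correct and follows essentially the same approach as the paper: both observe that measure preservation forces each atom $b$ of $\B$ to contain the same number of atoms of $\A$ as $\phi(b)$, then extend $\phi$ by choosing arbitrary bijections between these constituent atoms. Your write-up is simply more explicit about why the resulting atom permutation is a well-defined bijection and why the induced automorphism agrees with $\phi$ on all of $\B$.
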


\begin{proof}
Suppose that $\B$ and $\C$ are subalgebras of $\A$ and $g\colon
\B\til \C$ a measure preserving isomorphism. Then if $b$ is an atom
of $\B$, we have, as $g$ is measure preserving, that $b$ and $g(b)$
are composed of the same number of atoms of $\A$. Therefore, we can
extend $g$ to an automorphism of $\A$ by choosing a bijection
between the constituents of $b$ and $g(b)$ for each atom $b$ of
$\B$.
\end{proof}

\subsection{Roots of measure preserving homeomorphisms}

\begin{prop}\label{divisibility}
Suppose $\A\subseteq \B$ are equidistributed, resp. dyadic equidistributed, Boolean
algebras, $g$ an automorphism of $\A$ and $f$ an automorphism of
$\B$ such that $f|\A=g^n$. Then there is an equidistributed, resp. dyadic equidistributed,
algebra $\C\supseteq \B$ and an automorphism $h$ of $\C$ extending
$g$ and such that $h^n|\B=f$.
\end{prop}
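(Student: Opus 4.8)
The plan is to realize $\C$ as a free amalgam of $n$ copies of $\B$ over $\A$, with the automorphism $h$ defined so that it cyclically shifts the copies while applying $g$ (and partly $f$) in a controlled way, so that $h^n$ collapses back to $f$ on each copy. Concretely, let $\B_0,\B_1,\ldots,\B_{n-1}$ be $n$ disjoint isomorphic copies of $\B$, each containing the common subalgebra $\A$, with fixed isomorphisms $\theta_l\colon \B\til \B_l$ restricting to the identity on $\A$. Set $\C=\B_0\otimes_\A\B_1\otimes_\A\cdots\otimes_\A\B_{n-1}$, which by the discussion preceding the proposition is again equidistributed (resp. dyadic equidistributed), and identify $\B$ with $\pi_0[\B_0]\subseteq\C$ via $b\mapsto\pi_0(\theta_0(b))$. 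We must now build an automorphism $h$ of $\C$ with $h|\A=g$ and $h^n|\B=f$.

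The idea for $h$ is the following "companion matrix" construction. Thinking of an atom of $\C$ as a tuple $(b_0,\ldots,b_{n-1})$ of atoms of $\B$ lying below a common atom $a$ of $\A$, we want $h$ to send this, roughly, to a tuple of the form $(g b_1, g b_2,\ldots, g b_{n-1}, f' b_0)$, where $f'$ is a suitable automorphism of $\B$ to be chosen so that composing $n$ such shifts yields exactly $f$ in the zeroth coordinate and $g^n$ (hence $f$, on $\A$) everywhere. Iterating the shift $n$ times, the zeroth coordinate of $h^n$ applied to $(b_0,\ldots,b_{n-1})$ becomes $g^{n-1} f' b_0$; so the correct choice is $f'=g^{1-n}f$, which makes sense as an automorphism of $\B$ since $g$ is an automorphism of $\A$ and $f|\A=g^n$ forces $g^{1-n}f$ to restrict to the identity on $\A$ — wait, more precisely $g^{1-n}f|\A = g^{1-n}g^n = g|\A$, so it is not the identity but agrees with $g$ on $\A$. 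This is the key point requiring care: one checks that with $f'=g^{1-n}f$ the map $h$ defined coordinatewise by the cyclic shift-and-twist is a well-defined automorphism of $\C$ (it permutes atoms within each fibre over $\A$, because each $g,f,f'$ restricted to $\A$ maps the atom $a$ to the atom $g(a)$, so the shifted tuple still lies below a common atom, namely $g(a)$), that $h|\A=g$, and that $h^n$ acts on the zeroth coordinate exactly as $f$ while acting as $g^n=f$ on $\A$. Since $\B$ was identified with the zeroth copy, this gives $h^n|\B=f$.

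The one genuine subtlety — the main obstacle — is bookkeeping the restriction conditions so that $h$ is literally an automorphism of the Boolean algebra $\C$ and not merely a measure-preserving bijection of atoms that fails to respect the fibre structure; this is exactly where one uses that $\A$ is equidistributed so that $g$ (and likewise $f$, $f'$) is automatically measure preserving and permutes the atoms of $\A$, and that the amalgam's atoms below a given $\A$-atom $a$ are precisely the tuples of $\B$-atoms below $a$. Once the shift is seen to send the fibre over $a$ bijectively onto the fibre over $g(a)$, it extends to a unique Boolean automorphism of $\C$, and it is measure preserving because on an equidistributed (resp. dyadic equidistributed) algebra every atom permutation is. Finally, it is worth noting that the edge case $n=1$ is trivial (take $\C=\B$, $h=f$) and that for $n\geq 2$ the exponents $g^{1-n}$ are well defined because $g$ is invertible; no appeal to the earlier extension lemma is even needed, though one could alternatively phrase the construction as first defining $h$ on a subalgebra and then invoking that lemma to extend.
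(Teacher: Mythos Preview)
Your strategy is exactly the paper's: form the free amalgam of $n$ copies of $\B$ over $\A$ and let $h$ act as a cyclic shift of the factors, twisted so that one full revolution reproduces $f$ on the distinguished copy. Up to reversing the order of the factors, your formula $h(b_0,\ldots,b_{n-1})=(gb_1,\ldots,gb_{n-1},f'b_0)$ with $f'=g^{1-n}f$ is the same as the paper's $h(b^{(1)}\otimes\cdots\otimes b^{(n)})=b_{\phi(i)}^{\psi(i,j_n)}\otimes b_{\phi(i)}^{j_1}\otimes\cdots\otimes b_{\phi(i)}^{j_{n-1}}$, and your computation that the distinguished coordinate of $h^n$ is $g^{n-1}f'=f$ is correct.

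There is, however, one genuine gap in the write-up. You repeatedly apply $g$ to atoms $b_l$ of $\B$ and form the composite $g^{1-n}f$ as an automorphism of $\B$, but $g$ is only given as an automorphism of $\A$, not of $\B$. As written, ``$gb_1$'' and ``$g^{1-n}f$'' are undefined. What you need is a fixed extension $\tilde g$ of $g$ to an automorphism of $\B$ (which exists by the extension lemma, since $\B$ is equidistributed); then $f'=\tilde g^{\,1-n}f$ is a well-defined automorphism of $\B$ with $f'|\A=g$, your map $h$ becomes $(b_0,\ldots,b_{n-1})\mapsto(\tilde g b_1,\ldots,\tilde g b_{n-1},f'b_0)$, and everything goes through. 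The paper handles this implicitly by fixing an enumeration $b_i^1,\ldots,b_i^k$ of the atoms of $\B$ below each atom $a_i$ of $\A$; the map $b_i^j\mapsto b_{\phi(i)}^j$ is then precisely such a lift $\tilde g$. Once you make this choice explicit, your argument is complete and coincides with the paper's.
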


\begin{proof}
Enumerate the atoms of $\A$ as $a_1,\ldots, a_m$ and the atoms of
$\B$ as
$$
b_1^1,b_1^2,\ldots, b_1^k,b_2^1,b_2^2,\ldots, b_2^k,\ldots,
b_m^1,b_m^2,\ldots,b_m^k,
$$
where
$$
a_i=b_i^1\vee b_i^2\vee \ldots \vee b_i^k.
$$
Since $g$ is an automorphism of $\A$ we can find a permutation
$\phi$ of $\{1,\ldots,m\}$ such that
$$
g(a_i)=a_{\phi(i)}
$$
for all $i$. Similarly, we can find a function $\psi\colon
\{1,\ldots,m\}\times \{1,\ldots, k\}\til \{1,\ldots,k\}$ such that
for all $i$ and $j$
$$
f(b_i^j)=b_{\phi^n(i)}^{\psi(i,j)}.
$$
For $f(a_i)=g^n(a_i)=a_{\phi^n(i)}$ and thus $f(b_i^j)\leq f(a_i)=
a_{\phi^n(i)}$, whence $f(b_i^j)=b_{\phi^n(i)}^{\psi(i,j)}$ for some
$\psi(i,j)\in\{1,\ldots, k\}$. Also, since
\begin{align*}
b_{\phi^n(i)}^{\psi(i,1)}\vee b_{\phi^n(i)}^{\psi(i,2)}
\vee\ldots\vee b_{\phi^n(i)}^{\psi(i,k)}
&=f(b_i^1\vee b_i^2\vee \ldots \vee b_i^k)\\
&=f(a_i)\\
&=a_{\phi^n(i)}\\
&=b_{\phi^n(i)}^1\vee b_{\phi^n(i)}^2\vee \ldots \vee
b_{\phi^n(i)}^k,
\end{align*}
we see that $\psi(i,\cdot)\colon \{1,\ldots,k\}\til \{1,\ldots,k\}$
is a bijection for each $i$.

Let $\B_1=\B_2=\ldots=\B_n=\B$ and consider the free amalgam
$\otimes_\A^l\B_l$. We can now define the automorphism $h$ of
$\otimes_\A^l\B_l$ as follows.
\begin{align*}
h(b_{i}^{j_1}\otimes b_{i}^{j_2}\otimes\ldots\otimes b_{i}^{j_n})
&=b_{\phi(i)}^{\psi(i,j_n)}\otimes b_{\phi(i)}^{j_1}\otimes\ldots
\otimes b_{\phi(i)}^{j_{n-1}}.
\end{align*}
It follows from the fact that $\psi(i,\cdot)$ is a bijection that
$h$ also is a bijection of the atoms of $\otimes_\A^l\B_l$ and thus
defines an automorphism of $\otimes_\A^l\B_l$. Consider now
\begin{align*}
h^n(b_{i}^{j_1}\otimes b_{i}^{j_2}\otimes\ldots\otimes b_{i}^{j_n})
&=h^{n-1}(b_{\phi(i)}^{\psi(i,j_n)}\otimes
b_{\phi(i)}^{j_1}\otimes\ldots \otimes b_{\phi(i)}^{j_{n-1}})\\
&=h^{n-2}(b_{\phi^2(i)}^{\psi(\phi(i),j_{n-1})}\otimes
b_{\phi^2(i)}^{\psi(i,j_n)}\otimes b_{\phi^2(i)}^{j_1}\otimes\ldots
\otimes b_{\phi^2(i)}^{j_{n-2}})\\
&=\ldots\\
&=b_{\phi^n(i)}^{\psi(\phi^{n-1}(i),j_{1})}\otimes
b_{\phi^n(i)}^{\psi(\phi^{n-2}(i),j_2)}\otimes \ldots \otimes
b_{\phi^n(i)}^{\psi(i,j_n)}.
\end{align*}
Thus,
\begin{align*}
h^n(a_i\otimes a_i\otimes\ldots&\otimes a_i\otimes b_{i}^{j_n})\\
=&h^n\big(\bigvee_{j_1=1}^k \bigvee_{j_2=1}^k \ldots
\bigvee_{j_{n-1}=1}^k b_{i}^{j_1}\otimes
b_{i}^{j_2}\otimes\ldots\otimes
b_{i}^{j_n}\big) \\
=&\bigvee_{j_1=1}^k \bigvee_{j_2=1}^k \ldots \bigvee_{j_{n-1}=1}^k
h^n\big(b_{i}^{j_1}\otimes
b_{i}^{j_2}\otimes\ldots\otimes b_{i}^{j_n}\big)\\
=&\bigvee_{j_1=1}^k \bigvee_{j_2=1}^k \ldots \bigvee_{j_{n-1}=1}^k
b_{\phi^n(i)}^{\psi(\phi^{n-1}(i),j_1)}\otimes
b_{\phi^n(i)}^{\psi(\phi^{n-2}(i),j_2)}\otimes \ldots \otimes
b_{\phi^n(i)}^{\psi(i,j_n)}\\
=&a_{\phi^n(i)}\otimes a_{\phi^n(i)}\otimes \ldots \otimes
a_{\phi^n(i)}\otimes b_{\phi^n(i)}^{\psi(i,j_n)}.
\end{align*}
Similarly,
\begin{align*}
h(a_i\otimes a_i\otimes\ldots\otimes a_i)
=&h\big(\bigvee_{j_1=1}^k \bigvee_{j_2=1}^k \ldots
\bigvee_{j_{n}=1}^k b_{i}^{j_1}\otimes
b_{i}^{j_2}\otimes\ldots\otimes
b_{i}^{j_n}\big) \\
=&\bigvee_{j_1=1}^k \bigvee_{j_2=1}^k \ldots \bigvee_{j_{n}=1}^k
h\big(b_{i}^{j_1}\otimes
b_{i}^{j_2}\otimes\ldots\otimes b_{i}^{j_n}\big)\\
=&\bigvee_{j_1=1}^k \bigvee_{j_2=1}^k \ldots \bigvee_{j_{n}=1}^k
b_{\phi(i)}^{\psi(i,j_n)}\otimes b_{\phi(i)}^{j_1}\otimes\ldots
\otimes b_{\phi(i)}^{j_{n-1}}\\
=&a_{\phi(i)}\otimes a_{\phi(i)}\otimes \ldots \otimes a_{\phi(i)}.
\end{align*}

We now identify $\B$ with the image of $\B_n$ by the embedding
$\pi_n$ of $\B_n$ into $\otimes_\A^l\B_l$. Thus, the atoms of $\B$
are of the form
$$
a_i\otimes a_i\otimes\ldots\otimes a_i\otimes b_{i}^{j}
$$
and the atoms of $\A$ are
$$
a_i\otimes a_i\otimes\ldots\otimes a_i.
$$
Moreover, $g$ acts by
\begin{align*}
g(a_i\otimes a_i\otimes\ldots\otimes a_i) &=g(a_i)\otimes
g(a_i)\otimes\ldots \otimes g(a_i)\\
&=a_{\phi(i)}\otimes a_{\phi(i)}\otimes \ldots\otimes a_{\phi(i)},
\end{align*}
while $f$ acts by
\begin{align*}
f(a_i\otimes a_i\otimes\ldots\otimes a_i\otimes b_{i}^{j})
&=a_{\phi^n(i)}\otimes a_{\phi^n(i)}\otimes\ldots\otimes
a_{\phi^n(i)}\otimes b_{\phi^n(i)}^{\psi(i,j)}.
\end{align*}
Therefore, $h$ extends $g$, while $h^n$ extends $f$, which was what
we wanted.
\end{proof}

\begin{prop}\label{power}
Let $n\geq 1$. Then the generic measure preserving homeomorphism of
Cantor space is conjugate with its $n$th power.
\end{prop}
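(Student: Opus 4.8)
The plan is to reformulate the statement in terms of automorphisms of the Boolean algebra $\B_\infty$ of clopen subsets of $\ca$, using Stone duality, and then to run a Baire category argument inside ${\rm Homeo}(\ca,\mu)$. Recall that the basic open sets in this Polish group have the form $V_{\C,g}=\{f\in {\rm Homeo}(\ca,\mu)\del f|_\C=g\}$ where $\C$ is a finite (dyadic equidistributed) subalgebra of $\B_\infty$ and $g$ is a measure preserving automorphism of $\C$; moreover, since ${\rm Homeo}(\ca,\mu)$ has a comeagre conjugacy class, it suffices to exhibit a \emph{single} measure preserving homeomorphism $f$ that is conjugate to $f^n$ and lies in the comeagre class, or — what is cleaner — to show that the set $A_n=\{f\in {\rm Homeo}(\ca,\mu)\del f\text{ is conjugate to }f^n\}$ is comeagre, since then it meets the comeagre conjugacy class and every element of that class is conjugate to its $n$th power. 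I would aim for the latter: show $A_n$ is comeagre, or rather show directly that $A_n$ contains a dense $G_\delta$.

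First I would fix a compatible complete metric on ${\rm Homeo}(\ca,\mu)$ and observe that the relation ``$f$ is conjugate to $f^n$'' is $\boldsymbol\Sigma^1_1$, hence the set $A_n$ need not itself be Borel; to get around this I would instead build, by a standard back-and-forth/Baire-category construction, a comeagre set of pairs or — more precisely — show that the generic $f$ admits a witnessing conjugator. Concretely, I would enumerate a countable dense sequence of finite subalgebras $\C_0\subseteq \C_1\subseteq\cdots$ of $\B_\infty$ with $\bigcup_k\C_k$ dense, and on the product space ${\rm Homeo}(\ca,\mu)\times {\rm Homeo}(\ca,\mu)$ consider the closed set $\{(f,w)\del wf w^{-1}=f^n\}$; it suffices to show that its projection to the first coordinate is comeagre, and for that it is enough (by the Kuratowski--Ulam theorem, since the fibres are closed) to show that the set of $(f,w)$ with $wfw^{-1}=f^n$ is dense $G_\delta$ in a suitable comeagre box — but the honest way is a direct genericity argument: show that for a dense $G_\delta$ set of $f$, for every finite subalgebra $\C$ and every $\eps$ there is $w$ with $wfw^{-1}$ and $f^n$ agreeing on $\C$, arranged coherently across stages so the $w$'s converge. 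The engine driving each step is Proposition~\ref{divisibility}: given that on a finite algebra $\B$ we have a partial picture where $f$ restricted to a subalgebra $\A$ equals $g^n$, we can extend to a larger algebra $\C$ and find $h$ with $h|\A=g$ and $h^n|\B=f$ — i.e. we can always \emph{take $n$th roots} of finite approximations in a way compatible with a prescribed root on a subalgebra.

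The construction itself: build simultaneously an increasing chain of finite dyadic equidistributed algebras $\A_0\subseteq\A_1\subseteq\cdots$ with dense union, a coherent sequence of automorphisms $f_k$ of $\A_k$ (the approximations to the generic homeomorphism $f$, which will converge to an element of the comeagre conjugacy class provided we also meet the relevant density requirements from the proof in \cite{turbulence}), and automorphisms $h_k$ of $\A_k$ converging to some $h\in{\rm Homeo}(\ca,\mu)$ with $h^n=f$ and $h$ also of the same conjugacy type. Having $h^n=f$ with both $h$ and $f$ in the unique comeagre conjugacy class immediately gives that $f$ is conjugate to $h$, hence to $h^n=\dots$ — wait, that is not quite the target; rather, the point is: if $h$ and $f=h^n$ both lie in the comeagre conjugacy class, then $f$ is conjugate to $h$; applying this with $h$ in place of $f$ and noting $h^n=f$, one gets $f$ conjugate to $h^n$ is trivial, so the real content is that $f^n$ also lies in the comeagre conjugacy class and hence $f\sim f^n$. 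So the key extra ingredient is: \textbf{the $n$th power map preserves the comeagre conjugacy class}, equivalently, if $f$ is generic then so is $f^n$ — and this is exactly what having $h$ generic with $h^n=f$ gives when we apply it to the generic $f$ itself: find generic $h$ with $h^n=f$, then $f=h^n$ with $h$ generic, so $f$ and $h$ being both generic are conjugate, and then $f^n=(h^n)^n=(h^n)\cdot$... Let me instead state it cleanly: it suffices to show (i) the generic $f$ has an $n$th root $h$ in ${\rm Homeo}(\ca,\mu)$ which is itself generic, and (ii) conclude $f\sim h$ and $f^n\sim h^n=f$. Step (i) is the crux and is carried out by iterating Proposition~\ref{divisibility} along the back-and-forth, interleaving with the density conditions that characterize the comeagre class (every partial automorphism of a finite subalgebra extends — the Herwig--Lascar/KR amalgamation), so that both $h$ and $f=h^n$ satisfy them. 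The main obstacle, and the place requiring care, is ensuring that the approximations $h_k$ genuinely converge in ${\rm Homeo}(\ca,\mu)$ — i.e. that the algebras and automorphisms produced by repeated free amalgamation do not ``run away'' — and that $f=h^n$ continues to satisfy the genericity (extension) requirements; this is handled by doing the amalgamation steps for $h$'s requirements and for $f$'s requirements alternately, and by using that Proposition~\ref{divisibility} keeps us within the category of dyadic equidistributed algebras so that Haar measure is respected throughout.
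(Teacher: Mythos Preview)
Your proposal, after the detours through Kuratowski--Ulam and the comeagreness of $A_n$, lands on exactly the paper's argument: build by induction finite equidistributed algebras $\A_i$ with automorphisms $g_i$ (your $h_k$) and $f_i=g_i^n$, using Proposition~\ref{divisibility} at each step to extend while meeting the dense-open conditions $V_i$ defining the comeagre class for \emph{both} $g=\bigcup g_i$ and $f=g^n=\bigcup f_i$, then conclude $g\sim g^n$ since both lie in the comeagre conjugacy class. Your deduction $f\sim h\Rightarrow f^n\sim h^n=f$ is the same endgame; the reference to Herwig--Lascar is a slip (that is the isometry analogue used in Section~3), and the initial attempts via projections of closed sets are unnecessary once you have the direct construction.
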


We recall that by a theorem of Kechris and the author \cite{turbulence}, there is a comeagre conjugacy class $C$ in ${\rm Homeo}(2^\N, \mu)$ and thus it makes sense to speak of the elements of this conjugacy class as the generic elements of ${\rm Homeo}(2^\N, \mu)$.

Also, note that the  basic open sets in ${\rm Homeo}(2^\N,\mu)$ are of
the form
$$
U(h,\A)=\{g\in {\rm Homeo}(\ca,\mu)\del g|_\A=h|_\A\},
$$
where $\A$ is a finite equidistributed subalgebra of $\B_\infty$ and
$h\in {\rm Homeo}(\ca,\mu)$. We shall use this notation throughout.

\begin{proof}We claim that for any $U(h,\A)$ there
is some finite equidistributed $\B\subseteq \B_\infty$ containing
$\A$ and some measure preserving homeomorphism $k$ leaving $\B$
invariant, such that $U(k,\B)\subseteq U(h,\A)$. To see this, suppose  $h$ and
$\A$ are given. Then for some $n$, both $\A$ and $h(\A)$ are subalgebras of the equidistributed algebra $\B$ having atoms $N_s=\{x\in 2^\N\del s\sqsubseteq x\}$, where $s\in 2^n$. By equidistribution, the partial automorphism $h\colon \A\til
h(\A)$ of $\B$ extends to an automorphism $\hat h$ of $\B$. So let
$k$ be any measure preserving homeomorphism of $\ca$ that extends
$\hat h$. Then $\B$ is $k$-invariant while $U(k,\B)\subseteq
U(h,\A)$.

For simplicity, if $k$ is an automorphism of a finite equidistributed algebra $\B$, we also write $U(k,\B)$ to denote the set $\{g\in {\rm Homeo}(\ca,\mu)\del g|_\B=k\}$. The previous claim amounts to the fact that the open sets $U(k,\B)$, where $k$ is an automorphism of a finite equidistributed algebra $\B$, form a $\pi$-basis for the topology, i.e., any open set contains some such $U(k,\B)$. However, they do not form a basis, as for example, a Bernoulli shift has no non-trivial finite invariant subalgebras and therefore does not belong to any $U(k,\B)$ of this form for $\B\neq \{\tom,2^\N\}$.

Let now $C$ be the comeagre conjugacy class of ${\rm
Homeo}(\ca,\mu)$  and find dense open sets $V_i\subseteq {\rm
Homeo}(\ca,\mu)$ such that $C=\bigcap_iV_i$.  Enumerate the clopen
subsets of $\ca$ as $a_0,a_1,a_2,\ldots$. We shall define a sequence
of finite equidistributed algebras $\A_0\subseteq \A_1\subseteq
\A_2\subseteq \ldots$ of clopen sets and automorphisms $g_i$ and
$f_i$ of $\A_i$ such that
\begin{itemize}
  \item[(1)] $a_i\in \A_{i+1}$,
  \item[(2)] $g_{i+1}$ extends $g_i$,
  \item[(3)] $f_{i+1}$ extends $f_i$,
  \item[(4)] $g_i^n=f_i$,
  \item[(5)] $U(g_{i+1},\A_{i+1})\subseteq V_{i}$,
  \item[(6)] $U(f_{i+1},\A_{i+1})\subseteq V_{i}$.
\end{itemize}
To begin, let $\A_0$ be the trivial algebra with automorphism
$g_0=f_0$. So suppose $\A_i$, $g_i$, and $f_i$ are defined. We let
$\B$ be an equidistributed algebra containing both $a_i$  and $\A_i$
and let $h$ be any automorphism of $\B$ extending $g_i$. As $V_i$ is
dense open we can find some $U(k,\C)\subseteq V_i$, where $\C$ is a
$k$-invariant equidistributed algebra containing $\B$ and $k$
extends $h$. Again, as $V_i$ is dense open, we can  find some $U(p,
\D)\subseteq V_i$, where $\D$ is a equidistributed algebra
containing $\C$, $p$ a measure preserving homeomorphism leaving $\D$
invariant and extending $k^n|_\C$.

Now, by Proposition \ref{divisibility}, we can find an
equidistributed algebra $\E$ containing $\D$ and an automorphism $q$
of $\E$  extending $k|_\C$ such that $q^n$ extends $p|_\D$. Finally,
set $\A_{i+1}=\E$,
$$
g_{i+1}=q\supseteq k|_\C\supseteq h\supseteq g_i,
$$
and
$$
f_{i+1}=q^n\supseteq p|_\D\supseteq k^n|_\C\supseteq g_i^n=f_i.
$$
Then $U(g_{i+1},\A_{i+1})\subseteq U(k,\C)\subseteq V_n$ and
$U(f_{i+1},\A_{i+1})\subseteq U(p,\D)\subseteq V_n$.

Set now $g=\bigcup_ig_i$ and $f=\bigcup_if_i$. By (1),(2), and (3),
$f$ and $g$ are measure preserving automorphisms of $\B_\infty$ and,
thus by Stone duality, measure preserving homeomorphisms of $\ca$.
And by (4), $g^n=f$, while by (5) and (6), $f, g\in \bigcap_iV_i=C$.
Thus, $f$ and $g$ belong to the comeagre conjugacy class and are
therefore mutually conjugate.
\end{proof}

\begin{prop}\label{inverse}
Let $G$ be  a Polish group with a comeagre conjugacy class. Then the
generic element of $G$ is conjugate to its inverse.
\end{prop}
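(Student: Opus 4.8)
The plan is to exploit the standard characterisation of comeagre conjugacy classes due to the ``weak amalgamation'' / topological Rokhlin-type criterion: if $G$ has a comeagre conjugacy class $C$, then $C$ is closed under any operation that is ``local'' in the sense of being approximable on dense open sets, and in particular $C=C^{-1}$ would already follow once we know that the map $g\mapsto g^{-1}$ is a homeomorphism of $G$ sending comeagre sets to comeagre sets. So the first and essentially only step is to observe that inversion $\iota\colon G\til G$, $\iota(g)=g^{-1}$, is a homeomorphism of the Polish group $G$. Since homeomorphisms preserve the Baire category of sets, the image $\iota(C)=\{g^{-1}\del g\in C\}$ is again comeagre in $G$.

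Next I would note that $\iota(C)$ is itself a conjugacy class: if $g\in C$ and $x\in G$, then $(xgx^{-1})^{-1}=xg^{-1}x^{-1}$, so the set of inverses of elements of a conjugacy class is again a single conjugacy class. Thus $\iota(C)$ is a comeagre conjugacy class. But in a Polish group there can be at most one comeagre conjugacy class (two comeagre sets must intersect, and two conjugacy classes are either equal or disjoint). Hence $\iota(C)=C$, i.e.\ $C$ is invariant under inversion. Concretely, this says that for the generic $g\in G$ (meaning $g\in C$) we have $g^{-1}\in C$ as well, so $g$ and $g^{-1}$ lie in the same conjugacy class; that is, there is $x\in G$ with $xgx^{-1}=g^{-1}$, which is exactly the assertion that the generic element is conjugate to its inverse.

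There is essentially no obstacle here: the only thing one has to be a little careful about is the trivial observation that a Polish group cannot have two distinct comeagre conjugacy classes, which follows because any two comeagre subsets of a Baire space have nonempty intersection while distinct conjugacy classes are disjoint. Everything else is the bookkeeping that inversion is a continuous involution of $G$ (built into the definition of topological group) and hence a homeomorphism, so it preserves comeagreness. One could phrase the whole argument in one line --- ``$C$ comeagre, $C^{-1}$ comeagre conjugacy class, uniqueness forces $C^{-1}=C$'' --- but it is worth writing it out to make clear that no structure of $G$ beyond being a Polish group (with a comeagre conjugacy class) is used, in contrast to Proposition~\ref{power} which genuinely exploited the combinatorics of ${\rm Homeo}(\ca,\mu)$.
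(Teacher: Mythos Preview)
Your proof is correct and follows essentially the same idea as the paper's: both use that inversion is a homeomorphism so $C^{-1}$ is comeagre, and then the Baire category theorem forces $C$ and $C^{-1}$ to meet. The only cosmetic difference is that you conclude $C^{-1}=C$ directly from uniqueness of a comeagre conjugacy class, whereas the paper picks $g\in C\cap C^{-1}$ and then notes that ``being conjugate to one's inverse'' is a conjugacy-invariant property; these are equivalent one-line observations.
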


\begin{proof}
Let $C$ be the comeagre conjugacy class of $G$. Then also $C\inv$ is
comeagre, so must intersect $C$ in some point $g$. Thus both $g$ and
$g\inv$ are generic and hence conjugate.  Now, being conjugate with
your inverse is a conjugacy invariant property and thus holds
generically in $G$.
\end{proof}

\begin{thm}\label{homeomorphisms}
Let $n\neq 0$. Then the generic measure preserving homeomorphism of
Cantor space is conjugate with its $n$'th power and hence has roots
of all orders.

Thus, for the generic measure preserving homeomorphism $g$, there is
an action of $(\Q,+)$ by measure preserving homeomorphisms of $\ca$
such that $g$ is the action by $1\in \Q$.
\end{thm}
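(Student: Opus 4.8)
The plan is to deduce Theorem~\ref{homeomorphisms} from Propositions~\ref{power} and~\ref{inverse} by first establishing the case of positive $n$, then handling negative $n$, and finally assembling the $(\Q,+)$-action from coherent families of roots. For positive $n$, Proposition~\ref{power} already gives that the generic measure preserving homeomorphism is conjugate to its $n$th power. Since being conjugate to one's $n$th power is a conjugacy-invariant property (if $g = kg^nk\inv$ and $g' = \ell g \ell\inv$, then $g' = (\ell k \ell\inv)(g')^n(\ell k \ell\inv)\inv$), and the generic elements form a single conjugacy class $C$, it follows that \emph{every} element of $C$ is conjugate to its $n$th power, for every fixed $n \geq 1$. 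Intersecting over all $n \geq 1$ (still a conjugacy-invariant condition satisfied on all of $C$), we get that for generic $g$ and every $n \geq 1$ there is $h_n$ with $h_n^n = g$; i.e.\ $g$ has roots of all orders. For $n \leq -1$, write $g^n = (g^{-1})^{|n|}$; by Proposition~\ref{inverse} the generic $g$ is conjugate to $g^{-1}$, hence (the relevant property again being conjugacy invariant and hence holding on all of $C$) $g^{-1} \in C$ whenever $g \in C$, so $g^{-1}$ too is conjugate to all its positive powers, and therefore $g$ is conjugate to $g^n$ for all $n \neq 0$.

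For the $(\Q,+)$-action, the idea is to build, for a fixed generic $g$, a coherent system of roots indexed by the positive integers under divisibility. Concretely, enumerate the positive integers and successively choose homeomorphisms $g_{1} = g$ and, whenever $m \mid m'$, elements $g_{m'}$ with $g_{m'}^{m'/m} = g_m$; the point is that at each stage we only need a single root of an element already in $C$, and by the previous paragraph such roots exist. To make the choices coherent one works along the directed system $(\N_{>0}, \mid)$: having defined $g_m$ for $m$ in a finite divisor-closed set, and wishing to add $m_0$, pick $\ell$ a common multiple already handled or, better, organize the construction so that one only ever passes from $g_m$ to $g_{mp}$ for a prime $p$, taking a $p$th root each time; the resulting map $\frac{a}{m} \mapsto g_m^{a}$ is then well defined on $\Q$ because $g_{mp}^{p} = g_m$ forces $g_{mp}^{ap} = g_m^{a}$, i.e.\ the value of $\frac{a}{m}$ is independent of the representation. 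This yields a homomorphism $\phi\colon (\Q,+) \to {\rm Homeo}(\ca,\mu)$ with $\phi(1) = g$; since $\Q$ is given the discrete topology no continuity statement is needed, and composing with the canonical action of ${\rm Homeo}(\ca,\mu)$ on $\ca$ gives the asserted action of $(\Q,+)$ by measure preserving homeomorphisms with $g$ acting as~$1$.

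The main obstacle is the coherence of the chosen roots: a naive choice of a $p$th root of $g_m$ at each step need not respect already-made choices if the divisor lattice is approached out of order, so one must fix, once and for all, a linear extension of the divisibility order on $\N_{>0}$ (equivalently, enumerate the primes and take roots one prime at a time, so that every $m$ is reached by a unique increasing chain $1 \mid p_{i_1} \mid p_{i_1}p_{i_2} \mid \cdots$) and only ever extract a single prime-order root of the most recently constructed element along that chain. With this organization each extraction is legitimate because the element whose root is taken lies in $C$ and hence, by the first part, has a root of that prime order; and well-definedness of $\phi$ reduces to the single identity $g_{mp}^{p} = g_m$, which holds by construction. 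Everything else — that each $g_m$ is a measure preserving homeomorphism, that $\phi$ is additive, that the induced map is an action — is routine.
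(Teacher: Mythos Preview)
Your reduction of the first assertion to Propositions~\ref{power} and~\ref{inverse} is correct and matches the paper. The gap is in the second half, and it is the one the paper handles explicitly but you do not: you need each successive root to lie again in the comeagre class $C$. You assert that ``the element whose root is taken lies in $C$'' but never justify it. The point is that the $n$th root you obtain is not an arbitrary root but a \emph{conjugate} of $g$: from $g\sim g^n$ one gets $f$ with $fg^nf\inv=g$, i.e.\ $(fgf\inv)^n=g$, so the root $fgf\inv$ is itself in $C$. Without this observation the inductive extraction of roots is not legitimate, since you only know that elements of $C$ have roots, not that arbitrary elements of ${\rm Homeo}(\ca,\mu)$ do.

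Your construction of the $(\Q,+)$-action is also more fragile than it needs to be. Defining $g_m$ for \emph{all} $m\in\N_{>0}$ via prime-by-prime root extraction along a fixed linear extension of divisibility does not automatically yield coherence: if $g_6$ is defined as a cube root of $g_2$, there is no reason that $g_6^2=g_3$, so the relation $g_{m'}^{m'/m}=g_m$ can fail for pairs $m\mid m'$ not lying on your chosen chain, and then $\frac am\mapsto g_m^a$ is not well defined. (Your parenthetical ``every $m$ is reached by a unique chain $1\mid p_{i_1}\mid p_{i_1}p_{i_2}\mid\cdots$'' is also literally false if the $p_{i_j}$ are meant to be distinct, since non-squarefree $m$ are missed.) The paper sidesteps all of this by working along a single cofinal chain: set $g_1=g$ and inductively choose a generic $(n{+}1)$st root $g_{n+1}$ of $g_n$; then $\frac{k}{n!}\mapsto g_n^k$ is well defined on $\Q$ because any two representations $\frac{k}{n!}=\frac{k'}{n'!}$ are compared via the single relation $(g_{n+1})^{n+1}=g_n$. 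This is both simpler and avoids the coherence issue entirely.
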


\begin{proof} By Propositions \ref{power} and \ref{inverse}, we know that the generic $g$ is conjugate to all its positive powers
and to $g\inv$. But then $g\inv$ is generic and thus conjugate to
$(g\inv)^n=g^{-n}$, whence $g$ is conjugate with $g^{-n}$, $n\geq
1$.

So suppose $g$ is generic and $n\geq 1$. Then,  there is some $f$
such that $(fgf\inv )^n=fg^nf\inv =g$, and hence $g$ has a generic
$n$th root, namely, $fgf\inv$. This means that we can define a
sequence $g=g_1,g_2,\ldots$ of generic elements such that $g_{n+1}$
is an $(n+1)$st root of $g_n$, $(g_{n+1})^{n+1}=g_n$. The following
therefore defines an embedding of $(\Q,+)$ into ${\rm
Homeo}(\ca,\mu)$ with $1=\frac1{1!}\mapsto g_1$,
$$
\frac{k}{n!}\mapsto g_{n}^k,
$$
$k\in \Z$, $n\geq 1$.
\end{proof}

\subsection{The ring of finite ad\`eles}\label{loc comp}
Fix a prime number $p$ (the reader is referred to the article ``Global Fields'' by J. W. S. Cassels in \cite{cassels} for more details of the following construction). We recall the $p$-adic valuation on $\Q$, which is the function $|\cdot|_p\colon \Q\til [0,+\infty[$ defined by $|0|_p=0$ and
$$
\Big|p^k\frac ab\Big|_p=p^{-k},
$$
whenever $a,b$ are non-zero integers not divisible by $p$ and $k\in \Z$. It is easily seen that $|st|_p=|s|_p\cdot |t|_p$ and $|s+t|_p\leq \max\{|s|_p,|t|_p\}$ for all $s,t\in \Q$. It follows that $d_p(s,t)=|s-t|_p$ defines a translation invariant metric on $\Q$ such that if $(s_n)$ and $(t_n)$ are Cauchy sequences in $\Q$ then so are $(s_n\inv)$, $(s_n+t_n)$, and $(s_nt_n)$. Thus, if $\Q_p$ denotes the metric completion of  $\Q$, $\Q_p$ is a topological field, known as the field of {\em $p$-adic numbers}.

One way of representing the elements of the field $\Q_p$ is as infinite series
$$
\sum_{i=k}^\infty a_ip^i,
$$
where $k\in \Z$, $a_i\in \{0,\ldots, p-1\}$. Note that any such series is $d_p$-Cauchy. Moreover, the valuation extends to all of $\Q_p$ by
$$
\Big|\sum_{i=k}^\infty a_ip^i\Big|_p=p^{-k},
$$
assuming $a_k\neq 0$. Here, the usual ring of integers $\Z$ can be recognised as the set of finite series $\sum_{i=0}^ka_ip^i$, where $k<\infty$. The closure of $\Z$ within $\Q_p$, called the ring of {\em $p$-adic integers} and denoted by $\Z_p$, consists of all expressions $\sum_{i=0}^\infty a_ip^i$ and is a compact, open subgroup
subring of $\Q_p$. Note however that
$$
\Z_p=\{x\in \Q_p\del |x|_p\leq 1\},
$$
so despite its name, $\Z_p$ contains all rational numbers of the form $p^k\frac ab$, where $k\geq 0$ and $a,b$ are non-zero integers not divisible by $p$.

We now define the restricted product $\prod_{p}'\Q_p$ with respect to the compact open subsets $\Z_p$. $\prod_p'\Q_p$ consists of all sequences $(s_p)\in \prod_p\Q_p$, where the index $p$ runs over all primes, such that $s_p\in \Z_p$ for all but finitely many primes $p$. Moreover, $\prod_p'\Q_p$ has as basis for its topology the sets of the form
$$
\prod_{p\in F}U_p\times \prod_{p\notin F}\Z_p,
$$
where  $F$ is a finite set of primes and $U_p\subseteq \Q_p$ is open for all $p\in F$. In particular, we see that $\prod_p\Z_p$ is a compact open subring of $\prod_p'\Q_p$.

Now if $s\in\Q^*$, then writing
$$
s=\frac{p_1^{n_1}\ldots p_l^{n_l}}{q_1^{m_1}\ldots q_k^{m_k}},
$$
where $p_i$ and $q_i$ are distinct primes and $n_i,m_i\in \N$, we see that $|s|_p=1$ for all $p\neq p_i,q_i$, and so if $s_p$ denotes the element of $\Q_p$ corresponding to $s$, then
$(s_p)\in \prod_p'\Q_p$. It follows that we can identify $\Q$ with a subfield of the ring $\prod_p'\Q_p$ via the embedding $s\mapsto (s_p)$. Also, if $s\in \Q$ is such that $(s_p)\in\prod_p\Z_p$, then $|s|_p\leq 1$ for all $p$, so actually $s\in \Z$. Therefore, if $(t_n)$ is a sequence in $\Q$, we see that $t_n\til 0$ in the $\prod_p'\Q_p$-topology if and only if $t_n\in \Z$ for all but finitely many $n$ and, moreover, for any power $p^k$ of a prime, $k\geq 1$, $t_n$ is an integer multiple of $p^k$ for all but finitely many $n$.

The ring $\prod_p'\Q_p$ is called the {\em ring of finite ad\`eles} and will henceforth be denoted by $\aaa$. A fact, which will be important to us, is that $\Q$ is a dense subset of $\aaa$. This follows from the Strong Approximation Theorem (Cassels \cite{cassels}, \S 15). Also, $\aaa$ is a locally compact ring.

\

We shall now present another direct construction of $\aaa$, which is  closer to the viewpoint of this article (one can consult the book by L. Ribes and P. Zalesskii \cite{ribes} for more information on the profinite completion of $\Z$). Consider the embedding $\theta$ of $\Z$ into the group $\prod_{n=1}^\infty\Z/{n\Z}$ given by $\theta(a)=(a(n))_{n=1}^\infty$, where $a(n)\equiv a \mod n$ for every $n$. We define the {\em profinite completion} $\Z$ to be compact subgroup of $\prod_{n=1}^\infty\Z/{n\Z}$ given by
$$
\hat \Z=\overline {\theta(\Z)},
$$
and see that $\hat \Z$ is the subgroup consisting of all sequences $(a(n))_{n\geq 1}$ such that $a(m)\equiv a(n) \mod n$, whenever $n$ divides $m$.
Identifying $\Z$ with its image by $\theta$, the induced topology is called the {\em profinite topology} on $\Z$.  So if $(a_i)$ is a sequence in $\Z$, then $a_i\til 0$ in the profinite topology if and only if for every integer $n$, $a_i\equiv 0\mod n$ for all but finitely many $i$.
It follows from the Chinese Remainder Theorem  that  $\hat \Z\iso \prod_p\Z_p$.

Now, let $\|\cdot\|$ be the norm on $\Q$ defined by setting $\|0\|=0$ and for $s\in \Q^*$
$$
\|s\|=2^{-\min (n\geq 1\,|\, \frac sn\notin \Z)}.
$$
Then for any $n\geq 1$ and $s,t\in \Q$,  if $\frac sn, \frac tn\in \Z$, also $\frac{s+t}n,\frac {st}n\in \Z$, which implies that
$$
\|s+t\|\leq \max \{\|s\|,\|t\|\}
$$
 and
 $$
 \|st\|\leq \max \{\|s\|,\|t\|\}.
 $$
We can now define a translation invariant ultra-metric on $\Q$ by $d(s,t)=\|s-t\|$, and notice that from $\|s+t\|\leq \max \{\|s\|,\|t\|\}$ it follows that if $(s_n)$ and $(t_n)$ are Cauchy sequences, then so is $(s_n+t_n)$.

For $s\in \Q^*$, we define a clopen subgroup of $\Q$ by
$$
\langle s\rangle=\{ ns\;\Big|\; n\in \Z\}.
$$
To see that it is open, just note that if $s=\frac ab$, with $a,b\in \Z$, then
$$
(*)\qquad (\langle s\rangle)_{2^{-a}}=\{t\in \Q\del d(t,\langle s\rangle)<2^{-a}\}=\langle s\rangle.
$$
For if $\|t- ns\|<2^{-a}$, where $t\in \Q$ and $n\in \Z$, then $t-ns=la$ for some $l\in \Z$ and so
$t=(lb+n)\frac ab\in\langle s\rangle$. It follows that $\langle s\rangle$ is also closed, since the complement $\Q\setminus \langle s\rangle$ is the union of its disjoint open cosets. Remark that $\Q$ is the increasing union of the clopen subgroups $\langle \frac 1{n!}\rangle$ and that the $d$-topology on $\Z=\langle 1\rangle$ coincides with the profinite topology.

We claim that if $(s_n)$ and $(t_n)$ are Cauchy sequences in $\Q$, then so is $(s_nt_n)$. To see this, note that by $(*)$ the $s_n$ and $t_n$ will eventually all belong to some common subgroup $\langle 1/k\rangle$ and hence can be written $s_n=\frac{a_n}k$ and $t_n=\frac{b_n}k$ for integers $a_n,b_n$. Then, if $d\geq 1$ is fixed, for all sufficiently large $n,m$,
$$
\frac {s_n-s_m}{kd}=\frac{a_n-a_m}{k^2d}\in \Z
$$
and
$$
\frac {t_n-t_m}{kd}=\frac{b_n-b_m}{k^2d}\in \Z,
$$
so
\begin{displaymath}\begin{split}
\frac {s_nt_n-s_mt_m}d&=\frac{a_nb_n-a_mb_m}{k^2d}\\
&=\frac{(a_n-a_m)(b_n-b_m)}{k^2d}+\frac{(a_n-a_m)b_n}{k^2d}+\frac{a_n(b_n-b_m)}{k^2d}\in \Z.
\end{split}\end{displaymath}
Since $d$ is arbitrary it follows that $\|s_nt_n-s_mt_m\|\Lim{n,m\til \infty}0$, so ($s_nt_n)$ is Cauchy.

Thus, we see that the operations of addition $+$ and multiplication $\cdot$ on $\Q$ extend to continuous ring operations $+$ and $\cdot$ on the  $d$-metric completion of $\Q$. Since $\Z=\langle 1\rangle$ is open in $\Q$, $\hat \Z$ is a compact open subgroup of the completion, so the completion is locally compact.  We note also that a sequence $t_n\in \Q$ converges to $0$ if and only if for all natural numbers $k$, $t_n$ is an integer multiple of $k$ for all but finitely many $n$, i.e., if and only if $t_n\til 0$ in the $\prod_p'\Q_p$-topology. So the $d$-metric completion of $\Q$ is topologically isomorphic to $\aaa$.

\subsection{Actions of $\aaa$ by measure preserving homeomorphisms}\label{adic1}

Now returning to generic elements of ${\rm Homeo}(2^\N,\mu)$, we note that if $g$ is generic, then every $g$-orbit on the algebra $\B_\infty$ of clopen sets is finite. This follows from the fact, established in the proof of Proposition \ref{power}, that the open sets $U(k,\B)$, where $k$ is an automorphism of a finite equidistributed algebra $\B$, form a $\pi$-basis for the topology. So given any $b\in \B_\infty$, the generic $g$ must belong to some such $U(k,\B)$, where $\B$ is an equidistributed algebra containing $b$, and thus the $g$-orbit of $b$ is contained in the finite $g$-invariant algebra $\B$. Using this, one sees that $\overline{\langle g\rangle}$ is a profinite subgroup of ${\rm Homeo}(2^\N,\mu)$. Conversely, any generic $g$ has orbits of any finite order.

Therefore, if $k_i\in \Z$, we have
$$
(*)\qquad g^{k_i}\Lim{i\til \infty}e \quad\equi\quad \a n \; \a^\infty i  \; \;\;\;k_i\equiv 0\mod n
$$
and so the mapping $k\in \Z\mapsto g^k\in {\rm Homeo}(2^\N,\mu)$ is a topological isomorphism between $\Z$ equipped with its profinite topology and the infinite cyclic topological subgroup $\langle g\rangle$ of ${\rm Homeo}(2^\N,\mu)$. By the completeness of ${\rm Homeo}(2^\N,\mu)$, this extends to a topological embedding of $\hat \Z$ into ${\rm Homeo}(2^\N,\mu)$ whose image is the closed subgroup $\overline{\langle g\rangle}$. We shall now see how to extend this embedding to $\aaa$.

\begin{thm}\label{locally compact completion}
Let $g$ be a generic element of ${\rm Homeo}(2^\N,\mu)$ and let $\aaa$ be the ring of finite ad\`eles. Then $1\in \Q\mapsto g\in{\rm Homeo}(2^\N,\mu)$ extends to a homeomorphic embedding of $(\aaa,+)$ into ${\rm Homeo}(2^\N,\mu)$.
\end{thm}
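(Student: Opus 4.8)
The plan is to extend the embedding of $\Q$ obtained in Theorem~\ref{homeomorphisms} along the completion $\Q\subseteq\aaa$. Recall from the proof of Theorem~\ref{homeomorphisms} the sequence of generic elements $g=g_1,g_2,g_3,\ldots$ with $g_{n+1}^{n+1}=g_n$, so that $g_n^{n!}=g$, together with the injective homomorphism $\iota\colon\Q\to{\rm Homeo}(\ca,\mu)$, $\frac k{n!}\mapsto g_n^k$, with $\iota(1)=g$; I identify $\aaa$ with the $d$-metric completion of $\Q$ of Section~\ref{loc comp}. First I would record that $\aaa=\bigcup_n\frac1{n!}\hat\Z$ is an increasing union of compact open subgroups, where $\frac1{n!}\hat\Z$ is the closure in $\aaa$ of the clopen subgroup $\langle\frac1{n!}\rangle=\frac1{n!}\Z$ of $\Q$.

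Next I would check that $\iota$ is continuous on $(\Q,d)$ and extends to $\aaa$. Since every $g_n$ is generic, the argument recalled just before the statement of the present theorem, applied to $g_n$ in place of $g$, shows that $k\mapsto g_n^k$ is a topological isomorphism of $\Z$ with its profinite topology onto $\langle g_n\rangle$; composing with the $d$-homeomorphism $t\mapsto n!\,t$ of $\frac1{n!}\Z$ onto $\Z$ then shows that $\iota$ restricted to the clopen set $\frac1{n!}\Z$ is a topological isomorphism onto $\langle g_n\rangle$. As these clopen sets cover $\Q$, the homomorphism $\iota\colon(\Q,d)\to{\rm Homeo}(\ca,\mu)$ is continuous, and hence, ${\rm Homeo}(\ca,\mu)$ being Polish and $\Q$ dense in $\aaa$, $\iota$ extends to a continuous homomorphism $\bar\iota\colon\aaa\to{\rm Homeo}(\ca,\mu)$. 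On each compact subgroup $\frac1{n!}\hat\Z$ the map $\bar\iota$ is the continuous extension of the topological isomorphism $\iota|_{\frac1{n!}\Z}$ onto the dense subgroup $\langle g_n\rangle$ of $\overline{\langle g_n\rangle}$, hence is itself a topological isomorphism $\frac1{n!}\hat\Z\to\overline{\langle g_n\rangle}$; in particular each such restriction is injective, so, since $\aaa=\bigcup_n\frac1{n!}\hat\Z$, the map $\bar\iota$ is injective.

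It remains to show $\bar\iota$ is a topological embedding, and this is the main obstacle. By the previous paragraph and homogeneity it suffices to prove that $\overline{\langle g\rangle}=\bar\iota(\hat\Z)$ is clopen in the subgroup $\bar\iota(\aaa)=\bigcup_n\overline{\langle g_n\rangle}$: granting this, $\bar\iota$ carries the partition of $\aaa$ into (clopen) cosets of $\hat\Z$ onto the partition of $\bar\iota(\aaa)$ into clopen cosets of $\overline{\langle g\rangle}$, restricting to a homeomorphism on each piece, and is therefore a homeomorphism onto its image. What makes this delicate is that, although $\overline{\langle g\rangle}$ has finite index, hence is open, in every $\overline{\langle g_n\rangle}$, the open subgroup of ${\rm Homeo}(\ca,\mu)$ witnessing this a priori shrinks with $n$, and one must prevent elements of $\overline{\langle g_n\rangle}\setminus\overline{\langle g\rangle}$, with $n$ large, from converging to a point of $\overline{\langle g\rangle}$. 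This is exactly where the roots must be chosen with care: I would redo the construction of the $g_n$ as in Propositions~\ref{divisibility} and~\ref{power}, meeting the usual countably many density conditions that make every $g_n$ generic, but additionally fixing once and for all a nontrivial clopen set $b$ and exploiting the freedom in the amalgamation of Proposition~\ref{divisibility}---namely the choice of the bijections between constituents, which can be made so as not to close up the orbit of $b$ prematurely---so as to keep the $g_n$-orbit of $b$ of size exactly $n!\cdot s$, where $s$ is the (necessarily $\geq 2$, since a generic homeomorphism has no nontrivial invariant clopen set) size of the $g$-orbit of $b$. Then, using that $\overline{\langle g_n\rangle}\iso\hat\Z$ has a unique subgroup of each finite index, the open subgroup $W_b=\{h\in{\rm Homeo}(\ca,\mu)\del h(b)=b\}$ satisfies $W_b\cap\overline{\langle g_n\rangle}=\overline{\langle g_n^{\,n!s}\rangle}=\overline{\langle g^{\,s}\rangle}$ for every $n$, so $W_b\cap\bar\iota(\aaa)=\overline{\langle g^{\,s}\rangle}$; hence $\overline{\langle g^{\,s}\rangle}$, and with it its finite union of cosets $\overline{\langle g\rangle}$, is clopen in $\bar\iota(\aaa)$. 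Since $\bar\iota(1)=g$, the map $\bar\iota$ is the required homeomorphic embedding.
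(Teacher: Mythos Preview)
Your overall strategy matches the paper's: embed $\Q$ via the roots $g_n$, extend to $\aaa$ by completeness, and prove the extension is a homeomorphism by showing $\overline{\langle g\rangle}$ is open in $\bar\iota(\aaa)$, which you correctly reduce to arranging that a fixed clopen $b$ has $g_n$-period exactly $n!\cdot s$. (A side remark: your parenthetical that a generic $g$ has no nontrivial invariant clopen set is false---every $g$-orbit on $\B_\infty$ is finite and $g$ is conjugate to all its positive powers, so $g$ fixes some nontrivial clopen set; the paper uses such a $b$, so $s=1$, which simplifies the bookkeeping but is not essential.)

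The genuine gap is in how you achieve the period control. Proposition~\ref{divisibility} has no ``choice of bijections between constituents'': the amalgam $\otimes_\A^l\B_l$ and the automorphism $h$ are fully determined by the data $(\A,\B,g,f)$, and need not increase the period of an element of $\A$ at all (take $\B=\A$, so $\C=\A$ and $h=g$). Nor does Proposition~\ref{power} construct a tower of roots to be ``redone''; it builds a single $g$ with $g^n$ generic, and the roots in Theorem~\ref{homeomorphisms} are then obtained by conjugation, over which one has no direct structural control. The paper supplies the missing mechanism via three separate lemmas: Lemma~\ref{fine root} uses a \emph{different} amalgam (an $n$-fold tensor product over the trivial subalgebra, with a cyclic shift) to build, at the level of finite algebras, an $n$-th root in which a given $b$ of period $k$ acquires period $kn$; Lemma~\ref{caf} shows that two generics agreeing on a finite invariant subalgebra are conjugate by an element fixing that subalgebra; and Lemma~\ref{period} combines these to prove that any generic $g$ with $b$ of period $k$ admits a generic $n$-th root in which $b$ has period $kn$. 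Iterating Lemma~\ref{period} from a $b$ fixed by $g$ yields $g_n$ with $b$ of $g_n$-period $n!$, after which your $W_b$ computation (with $s=1$) finishes the proof exactly as you describe.
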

We note that, since the ring $\aaa$ contains $\Q$ as a subfield, $(\aaa,+)$ is a divisible group. So, by the above theorem, the generic measure preserving homeomorphism lies in a divisible, locally compact, Abelian subgroup of ${\rm Homeo}(2^\N,\mu)$. 

The proof is done by carefully choosing the sequence $g=g_1, g_2, g_3,\ldots$ of generic elements in the proof of Theorem \ref{homeomorphisms}, so as to control the convergence of sequences $(g_{n_i})^{k_i}$.  We split the proof into a couple of lemmas.

\begin{lemme}\label{caf}
Let $\B\subseteq \B_\infty$ be an equidistributed dyadic subalgebra and suppose $g,h$ are generic elements of ${\rm Homeo}(2^\N,\mu)$ with $g[\B]=h[\B]=\B$ and $g|_\B=h|_\B$. Then $g$ and $h$ are conjugate by an element of ${\rm Homeo}(2^\N,\mu)_\B=U(e,\B)$.
\end{lemme}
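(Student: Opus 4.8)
The plan is to reduce the lemma to a statement about first return maps, together with a Baire-category argument in the clopen ``slice'' $X:=U(g,\B)=\{k\in{\rm Homeo}(2^\N,\mu)\del k[\B]=\B\text{ and }k|_\B=g|_\B\}$. Observe that $X$ is nonempty and clopen, being the finite intersection over the atoms $b$ of $\B$ of the clopen sets $\{k\del k(b)=g(b)\}$; that the Polish group $G:={\rm Homeo}(2^\N,\mu)_\B=U(e,\B)$ acts on $X$ by conjugation, since conjugating by an element that fixes $\B$ pointwise does not alter the induced action on $\B$; and that, $C$ being the (necessarily unique) comeagre conjugacy class of ${\rm Homeo}(2^\N,\mu)$ and $X$ being open, $C\cap X$ is comeagre in $X$ and contains both $g$ and $h$. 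Granting that the $G$-orbit of \emph{every} generic element of $X$ is comeagre in $X$, any two such orbits intersect and therefore coincide, so there is a single comeagre $G$-orbit containing all generic elements of $X$, in particular both $g$ and $h$; hence $h=fgf\inv$ for some $f\in U(e,\B)$.

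To analyse the $G$-orbits I would coordinatize $X$ along the cycle structure of the permutation $\sigma:=g|_\B=h|_\B$ of the atoms of $\B$. On a $\sigma$-cycle $\gamma=(b_0,\ldots,b_{\ell-1})$ (so $\sigma(b_i)=b_{i+1}$, indices mod $\ell$), having fixed measure isomorphisms $\iota_i\colon 2^\N\to b_i$, a member $k\in X$ is coded by the tuple $(k_0,\ldots,k_{\ell-1})\in{\rm Homeo}(2^\N,\mu)^\ell$ with $k_i=\iota_{i+1}\inv\circ k|_{b_i\to b_{i+1}}\circ\iota_i$; running this over all cycles identifies $X$ and $G$ homeomorphically with products $\prod_\gamma{\rm Homeo}(2^\N,\mu)^{\ell_\gamma}$, the conjugation action becoming, blockwise, $(f\cdot k)_i=f_{i+1}k_if_i\inv$. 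On one block the ``holonomy'' $\Pi(k):=k_{\ell-1}\cdots k_1k_0$ is the transported first return map $k^\ell|_{b_0}$, it telescopes to satisfy $\Pi(f\cdot k)=f_0\Pi(k)f_0\inv$, and $k\mapsto(\Pi(k),k_0,\ldots,k_{\ell-2})$ is a homeomorphism onto ${\rm Homeo}(2^\N,\mu)\times{\rm Homeo}(2^\N,\mu)^{\ell-1}$ carrying the action to ``conjugate the first coordinate by $f_0$, move the remaining $\ell-1$ coordinates arbitrarily'' --- for any prescribed values of the last $\ell-1$ coordinates and any $f_0$, the equations $f_{i+1}k_if_i\inv=k_i'$ for $0\leq i\leq\ell-2$ determine $f_1,\ldots,f_{\ell-1}$ recursively. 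Consequently the $G$-orbit of $k$ is, in these coordinates, $\prod_\gamma\bigl(\text{conjugacy class of }\Pi_\gamma(k)\bigr)\times{\rm Homeo}(2^\N,\mu)^{\ell_\gamma-1}$, which is comeagre in $X$ exactly when each $\Pi_\gamma(k)$ has comeagre conjugacy class, i.e.\ (there being a unique such class) exactly when each first return map $\Pi_\gamma(k)=k^{\ell_\gamma}|_{b_0^\gamma}$ is generic.

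So everything hinges on the assertion that \emph{the first return map of a generic measure preserving homeomorphism of $2^\N$ to a clopen set belonging to one of its finite cycles is again generic}. I would prove this from the extension/amalgamation property of the class of finite equidistributed subalgebras of $\B_\infty$ equipped with an automorphism --- the Herwig--Lascar--Solecki-type property behind the theorem of Kechris and the author quoted above --- by showing that a finite configuration inside $b_0^\gamma$ that is invariant and generic relative to $\Pi_\gamma(k)$ can be lifted to a finite $k$-invariant equidistributed subalgebra of $\B_\infty$ extending any prescribed one (genericity of $k$ supplies the extension; since clopen subsets of $2^\N$ always have dyadic measure, equidistribution is preserved automatically) and then restricted back to $b_0^\gamma$, which transfers the characterizing extension/amalgamation property from $k$ to $\Pi_\gamma(k)$ and hence places $\Pi_\gamma(k)$ in the comeagre conjugacy class of ${\rm Homeo}(b_0^\gamma,\mu)$. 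This descent of genericity to the first return maps is the only substantial difficulty: it is a combinatorial matter about finite measured Boolean algebras carrying a distinguished permutation of their atoms, and the care lies in lifting a relative configuration over $b_0^\gamma$ to a global $k$-invariant one over $2^\N$ without destroying equidistribution or moving $\B$, applying the amalgamation available for the generic $k$, and reading the data back off. The remaining ingredients --- the clopenness of $X$, the product decomposition along cycles, and the Baire-category bookkeeping of the first paragraph --- are routine.
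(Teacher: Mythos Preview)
Your approach is correct and takes a genuinely different route from the paper's. You coordinatize the slice $X=U(g,\B)$ along the cycle decomposition of $\sigma=g|_\B$, identify the $G$-orbits with tuples of conjugacy classes of the first return (holonomy) maps $\Pi_\gamma$, and then reduce everything to the descent lemma that \emph{the first return map of a generic element to an atom of an invariant equidistributed algebra is again generic}. Your sketch of the descent via lifting finite $\Pi_\gamma(k)$-configurations in $b_0^\gamma$ to finite $k$-invariant equidistributed algebras over $\B$ does go through, though it is a bit more delicate than the phrase ``equidistribution is preserved automatically'' suggests: one must refine the algebra over the atoms outside the cycle $\gamma$ (and over the other cycles) so that all atoms again have equal measure, and the abstract extension $(\A^*,\phi)$ has to be lifted to an abstract extension $(\tilde\A^*,\tilde\phi)$ of the full $k$-system before invoking the extension property of the generic $k$. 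None of this is hard, but it is not automatic.

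The paper proceeds quite differently and never isolates the descent lemma. It quotes Proposition~3.2 of \cite{turbulence} to get only that the $G$-orbits of $g$ and $h$ are comeagre in \emph{some} neighbourhoods $V\ni g$, $W\ni h$ (not in all of $X$), shrinks these to basic open sets $U(g,\C)$ and $U(h,\D)$ with $\C,\D\supseteq\B$ finite equidistributed and respectively $g$- and $h$-invariant, and then builds the free amalgam $\C\otimes_\B\D$ with the diagonal automorphism $c\otimes d\mapsto g(c)\otimes h(d)$. Embedding this amalgam into $\B_\infty$ produces a single homeomorphism lying in $U(g,\C)$ whose $G$-conjugate by an explicit $f\in G$ lies in $U(h,\D)$; Baire category then forces the two locally comeagre orbits to meet. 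Your argument yields strictly more information---a complete description of the $G$-orbit structure on $X$, and in particular that the $G$-orbit of a generic element is comeagre in all of $X$---at the price of the descent lemma; the paper's amalgam argument is shorter and reuses machinery already in place, but says nothing about what the $G$-orbits look like.
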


\begin{proof}
Note that $U(g,\B)=U(h,\B)$ is an open subset of ${\rm Homeo}(2^\N,\mu)$ that is invariant under the conjugacy action by ${\rm Homeo}(2^\N,\mu)_\B$. Also, as $g$ and $h$ are generic, it follows from Proposition 3.2 of \cite{turbulence} that $X=\{fgf\inv \del f\in {\rm Homeo}(2^\N,\mu)_\B\}$ and   $Y=\{fhf\inv \del f\in {\rm Homeo}(2^\N,\mu)_\B\}$ are comeagre in neighbourhoods $V$ of $g$ , resp. $W$ of $h$. So find equidistributed dyadic algebras $\C\supseteq \B$ and $\D\supseteq \B$ that are respectively $g$ and $h$-invariant such that $U(g,\C)\subseteq V$ and $U(h,\D)\subseteq W$. It suffices to show that for some $f\in {\rm Homeo}(2^\N,\mu)_\B$, we have $f\inv U(g,\C)f \cap U(h,\D)\neq \tom$ since then also $f\inv Xf \cap Y\neq \tom$.

Now let $\C\otimes_\B\D$ be the free amalgam of $\C$ and $\D$ over $\B$ and define a measure preserving automorphism $k\colon \C\otimes_\B\D\til \C\otimes_\B\D$ by setting
$$
k(c\otimes d)=g(c)\otimes h(d),
$$
whenever $c\in \C$ and $d\in \D$ minorise some common atom $b\in \B$. Note that in this case, since $g[\B]=h[\B]=\B$ and $g|_\B=h|_\B$, we have $g(c),h(d)\leq g(b)=h(b)$, showing that the image $g(c)\otimes h(d)$ is well defined as an element of $\C\otimes_\B\D$.

Now, embedding the subalgebra $\C\otimes_\B\B$ of  $\C\otimes_\B\D$ into $\B_\infty$ via
$$
c\otimes b\mapsto c
$$
and subsequently extending this embedding to an embedding $\iota$ of all of $\C\otimes_\B\D$ into $\B_\infty$, we see that $k'=\iota\circ k\circ \iota\inv $ is an automorphism of $\iota[\C\otimes_\B\D]$ such that $k'|_\C=g|_\C$. Extend now $k'$ arbitrarily to a measure preserving homeomorphism of $2^\N$ also denoted by $k'$.

We can now find a measure preserving homeomorphism $f\in {\rm Homeo}(2^\N,\mu)$ such that $f(d)=\iota(b\otimes d)$ for all $d\in \D$ minorising an atom $b$ of $\B$. Note that then $f|_\B={\rm id}_\B$, so $f\in {\rm Homeo}(2^\N,\mu)_\B$. Also,
\begin{displaymath}\begin{split}
&f\inv k' f(d)=f\inv  k'(\iota(b\otimes d))=f\inv\iota k \iota\inv (\iota(b\otimes d))\\
=&f\inv \iota k(b\otimes d)=f\inv \iota(g(b)\otimes h(d)) =h(d),
\end{split}\end{displaymath}
whenever $d\in \D$ minorises an atom $b$ of $\B$. So $f\inv k'f|_\D=h|_\D$. Thus, $k'\in U(g,\C)$ while $f\inv k'f\in U(h,\D)$, so $f\inv U(g,\C)f\cap U(h,\D)\neq \tom$, which finishes the proof.
\end{proof}

\begin{lemme}\label{fine root}
Suppose $\B$ is a dyadic, equidistributed Boolean algebra, $g$ an automorphism of $\B$ and $b\in \B\setminus \{0,1\}$ is an element having $g$-period $k$, i.e.,
$g^i(b)=b$ if and only if $k$ divides $i$. Then for any $n\geq 1$ there is a dyadic, equidistributed Boolean algebra $\C\supseteq \B$ and an automorphism $h$ of $\C$ such that $h^n|_\B=g$ and $b$ has $h$-period $kn$.
\end{lemme}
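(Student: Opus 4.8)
The plan is to mimic, with the trivial subalgebra in place of $\A$, the free-amalgam construction from the proof of Proposition~\ref{divisibility}. Set $\B_1=\B_2=\cdots=\B_n=\B$ and let $\C=\B_1\otimes\B_2\otimes\cdots\otimes\B_n$ be the free amalgam over $\{0,1\}$; as noted in the discussion of free amalgams above, $\C$ is again a dyadic equidistributed Boolean algebra. Enumerate the atoms of $\B$ as $c_1,\ldots,c_M$ and let $\sigma$ be the permutation of $\{1,\ldots,M\}$ with $g(c_j)=c_{\sigma(j)}$. Define $h$ on the atoms of $\C$ by the twisted shift
$$
h(c_{j_1}\otimes c_{j_2}\otimes\cdots\otimes c_{j_n})=c_{\sigma(j_n)}\otimes c_{j_1}\otimes\cdots\otimes c_{j_{n-1}};
$$
as $\sigma$ is a bijection this permutes the atoms of $\C$ and hence extends to an automorphism of $\C$.

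Two computations then do the work. First, iterating $h$ a full $n$ times restores every coordinate after one application of $g$, that is $h^n(c_{j_1}\otimes\cdots\otimes c_{j_n})=c_{\sigma(j_1)}\otimes\cdots\otimes c_{\sigma(j_n)}$ (this is exactly the computation carried out in the proof of Proposition~\ref{divisibility}). Identifying $\B$ with its copy $\pi_n(\B)\subseteq\C$, whose atoms are the joins $\pi_n(c_j)=\bigvee_{j_1,\ldots,j_{n-1}}c_{j_1}\otimes\cdots\otimes c_{j_{n-1}}\otimes c_j$, this yields $h^n|_\B=g$; so we may take $\C$ and $h$ as just constructed. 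Second, I would trace the $h$-orbit of $b$. Put $S=\{\,l\del c_l\leq b\,\}$, a nonempty proper subset of $\{1,\ldots,M\}$ since $b\notin\{0,1\}$. From the displayed formula one reads off the single-step identities $h(\pi_r(c))=\pi_{r+1}(c)$ for $1\leq r\leq n-1$ and $h(\pi_n(c))=\pi_1(g(c))$ (where $\pi_r\colon\B\inj\C$ is the embedding onto the $r$th coordinate), and iterating these gives, for all $q\geq 0$,
$$
h^{qn}(\pi_n(b))=\pi_n(g^q(b)) \qquad\text{and}\qquad h^{qn+r}(\pi_n(b))=\pi_r(g^{q+1}(b))\ \ (1\leq r\leq n-1).
$$

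It then remains to read off the period. For indices $i=qn$ we have $h^i(\pi_n(b))=\pi_n(b)$ iff $g^q(b)=b$ iff $k\mid q$, i.e.\ iff $nk\mid i$. For $i=qn+r$ with $1\leq r\leq n-1$, the element $h^i(\pi_n(b))=\pi_r(g^{q+1}(b))$ belongs to the subalgebra $\pi_r(\B)$, and since $r\neq n$ the subalgebras $\pi_r(\B)$ and $\pi_n(\B)$ of the free amalgam meet only in $\{0,1\}$; as $\pi_n(b)\notin\{0,1\}$ this forces $h^i(\pi_n(b))\neq\pi_n(b)$. Hence $h^i(\pi_n(b))=\pi_n(b)$ holds precisely when $nk\mid i$, so $b$ has $h$-period $nk$, as required. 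The step calling for genuine care is this last disjointness observation — that distinct coordinate copies of $\B$ inside the free amalgam share only $0$ and $1$ — and it is precisely here that the hypothesis $b\notin\{0,1\}$ is used (for $b\in\{0,1\}$ the conclusion fails, since $h$ fixes $0$ and $1$).
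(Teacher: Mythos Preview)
Your proof is correct and follows essentially the same route as the paper: the same free amalgam $\C=\B^{\otimes n}$ over the trivial algebra, the same twisted shift $h$, the same identification of $\B$ with the last coordinate copy, and the same two-case period analysis. The only cosmetic difference is in the off-multiple case: where you invoke the disjointness $\pi_r(\B)\cap\pi_n(\B)=\{0,1\}$ for $r\neq n$, the paper instead observes directly that the last tensor coordinate of $h^m(1\otimes\cdots\otimes 1\otimes b)$ equals $1\neq b$ whenever $n\nmid m$---the same content, slightly different bookkeeping.
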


\begin{proof}
Let ${\C}=\underbrace{\B\otimes \cdots\otimes\B}_{n \text{ times}}$ and identify $\B$ with the last factor in the product, i.e., $x\in \B$ corresponds to $1\otimes\cdots\otimes 1\otimes x$. Now, if $x_1\otimes\cdots\otimes x_{n-1}\otimes x_n$ is an atom of $\C$, we define
$$
h(x_1\otimes\cdots\otimes x_{n-1}\otimes x_n)=g(x_n)\otimes x_1\otimes\cdots\otimes x_{n-1}.
$$
Then clearly
\begin{displaymath}\begin{split}
h^n(1\otimes\cdots\otimes 1\otimes x_n)&=g(1)\otimes\cdots\otimes g(1)\otimes g(x_n)\\
&=1\otimes\cdots\otimes 1\otimes g(x_n),
\end{split}\end{displaymath}
so $h^n|_\B=g$.

Also, for any $l\in \Z$,
\begin{displaymath}\begin{split}
h^{nl}(1\otimes\cdots\otimes 1\otimes b)&=1\otimes\cdots\otimes 1\otimes g^l(b),
\end{split}\end{displaymath}
which equals $1\otimes\cdots\otimes 1\otimes b$ if and only if $k$ divides $l$.
And if $n$ does not divide $m$, then $\pi_n(h^m(1\otimes\cdots\otimes 1\otimes b))=1\neq b$, where $\pi_n$ is the projection onto the last coordinate factor. So $h^m(1\otimes\cdots\otimes 1\otimes b)\neq 1\otimes\cdots\otimes 1\otimes b$. Thus, $1\otimes\cdots\otimes 1\otimes b$ has $h$-period $nk$.
\end{proof}

\begin{lemme}\label{period}
Suppose $g\in {\rm Homeo}(2^\N,\mu)$ is generic and $b\in \B_\infty$ has $g$-period $k$. Then for any $n\geq 1$ there is a generic $f$ such that $g=f^n$ and $b$ has $f$-period $kn$.
\end{lemme}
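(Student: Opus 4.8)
The plan is to rerun the back-and-forth construction from the proof of Proposition \ref{power}, with the roles of an automorphism and its $n$th power interchanged (since now we are extracting a root), but \emph{seeded} so that the resulting power agrees with the given $g$ on an invariant algebra containing $b$; Lemma \ref{fine root} supplies the seed and Lemma \ref{caf} transports the conclusion back to $g$ itself. We may assume $b\neq \tom, 2^\N$, as otherwise $k=1$ and the statement is vacuous unless $n=1$. Since $g$ is generic, its action on $\B_\infty$ has only finite orbits, so, as noted before the statement of Theorem \ref{locally compact completion}, $b$ lies in a finite $g$-invariant equidistributed subalgebra $\B_0\subseteq \B_\infty$; since every clopen set has dyadic measure, $\B_0$ is automatically dyadic equidistributed. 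The element $b$ has $g|_{\B_0}$-period $k$.

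First I would apply Lemma \ref{fine root} to $\B_0$, the automorphism $g|_{\B_0}$, the element $b$, and the exponent $n$, obtaining a dyadic equidistributed algebra $\C_0\supseteq \B_0$ and an automorphism $h_0$ of $\C_0$ with $h_0^n|_{\B_0}=g|_{\B_0}$ and such that $b$ has $h_0$-period $kn$. Next I would carry out the construction of Proposition \ref{power} verbatim, but starting from $\A_0=\C_0$, $F_0=h_0$, $G_0=F_0^n=h_0^n$: enumerating the clopen subsets of $2^\N$ as $a_0,a_1,\ldots$ and fixing dense open sets $V_i$ with $\bigcap_iV_i=C$ the comeagre conjugacy class, I would build an increasing chain of dyadic equidistributed algebras $\A_0\subseteq \A_1\subseteq\ldots$ with automorphisms $F_i$ of $\A_i$ and $G_i:=F_i^n$ such that $F_{i+1}$ extends $F_i$, $a_i\in \A_{i+1}$, and $U(F_{i+1},\A_{i+1})\subseteq V_i$, $U(G_{i+1},\A_{i+1})\subseteq V_i$; the successor step is word for word that of Proposition \ref{power}, invoking the density of the $V_i$ and Proposition \ref{divisibility}. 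Setting $F=\bigcup_iF_i$ and $G=F^n=\bigcup_iG_i$, both $F$ and $G$ are generic. Moreover $\C_0$ is $F$-invariant with $F|_{\C_0}=h_0$, so $\B_0$ is $G$-invariant, $G|_{\B_0}=h_0^n|_{\B_0}=g|_{\B_0}$, and $b$ has $F$-period exactly $kn$ (all iterates $F^i(b)=h_0^i(b)$ being computed inside $\C_0$).

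Finally, $g$ and $G$ are two generic elements of ${\rm Homeo}(2^\N,\mu)$, both leaving the dyadic equidistributed algebra $\B_0$ invariant and agreeing on it, so Lemma \ref{caf} produces $\sigma\in {\rm Homeo}(2^\N,\mu)_{\B_0}=U(e,\B_0)$ with $\sigma G\sigma\inv=g$. I would then set $f=\sigma F\sigma\inv$. This $f$ is generic, being conjugate to the generic $F$, and $f^n=\sigma F^n\sigma\inv=\sigma G\sigma\inv=g$. Since $\sigma$ fixes $\B_0$ pointwise and $b\in \B_0$, we have $\sigma\inv(b)=b$, hence $f^i(b)=\sigma\big(F^i(b)\big)$ for every $i$; as $\sigma$ is a bijection with $\sigma(b)=b$, this equals $b$ precisely when $F^i(b)=b$, i.e., precisely when $kn\mid i$. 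Thus $b$ has $f$-period $kn$, as required.

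The verifications in the successor step are routine, being identical to those in Proposition \ref{power}. The only point requiring care is the bookkeeping at the base of the construction: arranging via Lemma \ref{fine root} that the auxiliary power $G$ restricts to the given $g$ on an invariant algebra containing $b$ is exactly what makes Lemma \ref{caf} applicable and allows the carefully arranged period $kn$ to be pulled back to $g$. I do not anticipate any genuine obstacle beyond keeping this interplay straight.
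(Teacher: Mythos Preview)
Your proof is correct and follows the same scheme as the paper's: seed with Lemma \ref{fine root}, produce a generic $n$th root whose $n$th power agrees with $g$ on the invariant algebra $\B_0$, then transport via Lemma \ref{caf}. The only difference is that the paper avoids rerunning the construction of Proposition \ref{power}: it simply picks \emph{any} generic extension $h\in U(\tilde h,\C_0)$ and invokes Proposition \ref{power} directly to conclude that $h^n$ is generic, which shortens your step 3 to a one-liner.
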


\begin{proof}
Let $\B$ be the minimal $g$-invariant equidistributed subalgebra of $\B_\infty$ containing $b$.  Now, by Lemma \ref{fine root}, there is an equidistributed algebra $\C\supseteq \B$ and an automorphism $\tilde h$ of $\C$ such that $\tilde h|_\B=g|_\B$, while $b$ has $\tilde h$-period $kn$. Let now $h\in {\rm Homeo}(2^\N,\mu)$ be any generic extension of $\tilde h$. Then $h^n|\B=g|_\B$ and $h^n$ is generic too, by Proposition \ref{power}. Applying Lemma \ref{caf}, $h^n$ and $g$ are conjugacte by an element $s\in {\rm Homeo}(2^\N,\mu)_\B$, $sh^ns\inv=g$, whereby $f=shs\inv$ is a generic $n$th root of $g$ with respect to which $b$ has period $kn$.
\end{proof}

And now for the proof of Theorem \ref{locally compact completion}.
\begin{proof}
Suppose $g$ is generic and let $b\in \B_\infty\setminus \{\tom,2^\N\}$ be an arbitrary clopen set fixed by $g$.  Using Lemma \ref{period}, we can inductively choose generic $g=g_1,g_2,g_3,\ldots$ such that $(g_{n+1})^{n+1}=g_n$ and $b$ has $g_n$-period $n!$. It follows from looking at $(g_{n_i})^{k_i}(b)$, that if $k_i, n_i\geq 1$ are such that $(g_{n_i})^{k_i}\Lim{i\til \infty}e$, then for all but finitely many $i$, $n_i!$ divides $k_i$ and, in particular, for all but finitely many $i$, $(g_{n_i})^{k_i}$ is an integer power of $g=g_1$, namely
$$
(g_{n_i})^{k_i}=g^{\frac{k_i}{n_i!}}.
$$
So using $(*)$, we see that
\begin{displaymath}\begin{split}
(g_{n_i})^{k_i}\Lim{i\til \infty}e\quad&\equi\quad \big[ \a^\infty i\;\;\; k_i\equiv 0\mod n_i!\;\;\og\;\;  \a m\; \a^\infty i\;\;\; \frac{k_i}{n_i!}\equiv 0\mod m\big]\\
&\equi\quad\Big\|\frac{k_i}{n_i!}\Big\|\til 0.
\end{split}\end{displaymath}
Now, embedding $\Q$ into ${\rm Homeo}(2^\N,\mu)$ by $\frac k{n!}\mapsto g_n^k$, $k\in \Z$ and $n\geq 1$, and identifying $\Q$ with its image, we see that the topology induced from ${\rm Homeo}(2^\N,\mu)$ coincides with the $d$-topology.  Therefore, by completeness of ${\rm Homeo}(2^\N,\mu)$, the embedding extends to all of $\aaa$.
\end{proof}

It is instructive to see how the multiplication in the topological ring $\aaa$ is interpreted as a multiplication operation $\times$ in the topological group ${\rm Homeo}(2^\N,\mu)$ via the embedding above. So suppose $g$ is  generic and let $g=g_1,g_2,g_3,\ldots$ be the sequence of generic roots defined in the proof of Theorem \ref{locally compact completion}. Denote by $\go R(g)$ the image of $\aaa$ under the embedding into ${\rm Homeo}(2^\N,\mu)$. We shall simplify notation a bit by letting $g^s$ denote the image of $s\in \aaa$ in $\go R(g)$. Thus $g^sg^t=g^{s+t}$ and $g^s\times g^t=g^{st}$, whenever $s,t\in \aaa$.
In particular, to compute the square root with respect to the group multiplication of an element $g^s$ of $\go R(g)$, we multiply by $g^{\frac1 2}$:
$$
(g^{\frac 12}\times g^s)(g^{\frac 12}\times g^s)=g^{\frac s2}g^{\frac s2}=g^s.
$$

The rational powers of $g$ are of course easy to write in terms of integer powers of $g_n$, namely, if $s=\frac k{n!}$ for $k\in \Z$ and $n\geq 1$, then $g^s=(g_n)^k$. On the other hand, if $s,t\in \aaa$ are arbitrary elements, we find some $k$ such that $s,t\in \overline{\langle \frac 1k\rangle}$, whereby $st\in \overline{\langle\frac 1{k^2}\rangle}$.
Now, given any $b\in \B_\infty$, let $\ku O$ be the orbit of $b$ under $g^{k^{-2}}$.
Then if $m{k^{-1}}, n{k^{-1}}\in \langle \frac 1{k}\rangle$ are sufficiently close to $s$ and $t$ respectively, $g^{m{k^{-1}}}$ and $g^{n{k^{-1}}}$ agree with $g^s$ and $g^t$ on $\ku O$. In particular,
$$
[g^s\times g^t](b)=[g^{m{k^{-1}}}\times g^{n{k^{-1}}}](b)=(g^{k^{-2}})^{mn}(b).
$$

\section{Powers of generic isometries}

\subsection{Free amalgams of metric spaces}
We shall now review the concept of free amalgamations of metric spaces, which is certainly part of the folklore.  Suppose $\A$ and $\B_1,\ldots,\B_n$ are non-empty finite metric spaces
and $\iota_i\colon {\A}\inj\B_i$ is an isometric embedding for each $i$.  We define the {\em free amalgam}
$\bigsqcup_{\A}\B_l$ of $\B_1,\ldots,\B_n$ over $\A$ and the embeddings $\iota_1,\ldots,\iota_n$ as follows.

Denote by $d_i$ the metric on $\B_i$ for each $i$ and let $\C_i=\B_i\setminus \iota_i[\A]$. By renaming elements, we can suppose that $\C_1,\ldots,\C_n$ and $\A$ are pairwise disjoint.

We then let the universe of $\bigsqcup_{\A}\B_l$ be
$\A\cup \bigcup_{i=1}^n\C_i$ and define the metric $\partial$ by the
following conditions
\begin{itemize}
\item[(1)] $\partial(x,y)=d_i(\iota_ix,\iota_iy)$ for $x,y\in \A$,
\item[(2)] $\partial(x,y)=d_i(\iota_ix,y)$ for $x\in \A$ and $y\in \C_i$,
\item[(3)] $\partial(x,y)=d_i(x,y)$ for $x,y\in \C_i$,
\item[(4)] $\partial(x,y)=\min_{z\in\A}d_i(x,\iota_iz)+d_j(\iota_jz,y)$ for $x\in \C_i$ and $y\in \C_j$, $i\neq j$.
\end{itemize}
We notice first that in (1) the definition is independent of $i$ since each $\iota_i$ is an isometry. Also, a careful checking of the triangle inequality shows that this indeed defines a metric $\partial$ on $\A\cup \bigcup_{i=1}^n\C_i$.

We define for each $i$ an isometric embedding $\pi_i\colon \B_i\inj \bigsqcup_{\A}\B_l$ by
\begin{itemize}
\item $\pi_i(x)=x$ for $x\in \C_i$,
\item $\pi_i(\iota_ix)=x$ for $x\in \A$.
\end{itemize}
Notice that in this way the following diagram commutes
$$
\begin{CD}
\A @>\iota_i>> \B_i\\
@V\iota_jVV   @VV\pi_iV\\
\B_j@>>\pi_j> \bigsqcup_\A\B_l
\end{CD}
$$

\subsection{Roots of isometries}

\begin{prop}\label{divisibility isometry}
Let $\bf A\subseteq B$ be finite rational metric spaces, $f$ an isometry of $\A$ and $g$ an isometry of $\B$ leaving $\A$ invariant and such that $f^n=g|_\A$ for some $n\geq 1$. Then there is a finite rational metric
space $\bf D\supseteq B$ and an isometry $h$ of $\D$ such that $h^n$ leaves $\B$ invariant and  $h^n|_\B=g$.
\end{prop}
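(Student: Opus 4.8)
The plan is to mimic the proof of Proposition \ref{divisibility} (the Boolean-algebra version), replacing free amalgams of measured Boolean algebras with free amalgams of finite metric spaces as set up in the previous subsection. The wrinkle, compared to the Boolean case, is that an isometry of $\A$ need not respect any fixed "equidistribution", so instead of decomposing each atom into a fixed number of pieces, I would decompose each $\A$-orbit of the extension structure.

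First I would analyze the orbit structure. Write $\A$ as a disjoint union of $f$-orbits $O_1,\ldots,O_r$, where $O_s$ has size $\ell_s$ and $f$ cyclically permutes its points. Since $g$ leaves $\A$ invariant and $g|_\A=f^n$, the isometry $g$ permutes the $f$-orbits; more precisely it sends each $O_s$ to some $O_{\sigma(s)}$ and, restricted to $O_s$, acts as a power of the cyclic shift. The point set $\B\setminus\A$ decomposes into $g$-orbits as well, but now these orbits are not required to line up with anything; I would just record, for each $x\in\B$, the isometry type of its distances to all of $\A$ and to the other points, and note how $g$ permutes everything.

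Next I would build the amalgam. Let $\B_1=\cdots=\B_n=\B$, all containing $\A$ via the identity embedding, and form $\D':=\bigsqcup_\A\B_l$, the free amalgam of $n$ copies of $\B$ over $\A$ (using the construction of the previous subsection, which is a finite rational metric space since each $\B$ is). Denote by $\pi_i\colon\B\inj\D'$ the $i$-th canonical embedding. I would then define a map $h$ on $\D'$ by a "shift-and-twist" rule exactly parallel to the Boolean case: on the common part $\A$ put $h|_\A=f$; on a point $\pi_i(x)$ with $x\in\B\setminus\A$, for $i<n$ set $h(\pi_i(x))=\pi_{i+1}(x)$, and for $i=n$ set $h(\pi_n(x))=\pi_1(g(x))$ — with the convention that if $g(x)\in\A$ one simply outputs $f(g(x))\in\A$, which is consistent. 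I must check (i) that $h$ is a well-defined bijection of the point set (clear, since it is built from the bijections $f$ and $g$ and the disjointness of the copies), and (ii) that $h$ is an isometry of $\D'$. For (ii) one verifies, using the defining clauses (1)–(4) of the amalgam metric $\partial$, that $h$ preserves $\partial$: distances within a single copy $\pi_i[\B]$ are preserved because shifting $i\mapsto i+1$ is distance-preserving on each copy and $g$ is an isometry of $\B$; cross-copy distances are given by the $\min_{z\in\A}$ formula in clause (4), and since $h$ agrees with $f$ on $\A$ and $f$ is an isometry of $\A$, the minimum is preserved. Finally, a telescoping computation of $h^n$ on $\pi_n(x)$ gives $h^n(\pi_n(x))=\pi_n(g(x))$ when $g(x)\notin\A$, and $h^n(\pi_n(x))=f^n(g(x))=g(x)$ when $g(x)\in\A$ — both cases say $h^n$ agrees with $g$ on the copy $\pi_n[\B]$, which we identify with $\B$. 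On $\A$ itself, $h^n|_\A=f^n=g|_\A$, and $h|_\A=f$. Setting $\D:=\D'$, identifying $\B$ with $\pi_n[\B]$, we get $h$ extending $f$ with $h^n$ leaving $\B$ invariant and $h^n|_\B=g$, as required.

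The main obstacle I anticipate is bookkeeping in clause (ii): one has to be careful with the boundary case $i=n$, where $h$ both shifts back to copy $1$ and applies $g$, and in particular with points $x\in\B\setminus\A$ whose image $g(x)$ lands in $\A$ (these exist precisely when $g$ does not leave $\B\setminus\A$ invariant, which is allowed). In that situation $\pi_n(x)$ is mapped into $\A\subseteq\D'$, and one must check the isometry condition for the "mixed" distances $\partial(h(\pi_n(x)),h(\pi_j(y)))$ using clauses (1)–(2) rather than (4). This is a finite case check and goes through because $f=h|_\A=g|_\A\circ\text{(nothing)}$ is an isometry and $g$ is an isometry of $\B$, so all relevant distances are matched; but it is the step that requires genuine care rather than a direct transcription of the Boolean argument. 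All spaces produced are finite and rational since amalgams of finite rational metric spaces are finite and rational, so the conclusion holds with $\D$ a finite rational metric space.
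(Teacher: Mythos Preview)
Your overall strategy---amalgamate $n$ copies of $\B$ over $\A$ and define $h$ by a shift-with-twist---is exactly right and matches the paper's approach, but the map you write down is \emph{not} an isometry of the amalgam. The problem is your choice of embeddings $\iota_i\colon\A\inj\B_i$: you take all of them to be the identity. With that choice, for $x\in\A$ and $y\in\B\setminus\A$ sitting in copy $i<n$, clause~(2) of the amalgam metric gives $\partial(x,y^i)=d(x,y)$, whereas
\[
\partial\bigl(h(x),h(y^i)\bigr)=\partial\bigl(f(x),y^{i+1}\bigr)=d\bigl(f(x),y\bigr).
\]
There is no reason for $d(f(x),y)=d(x,y)$: $f$ is only an isometry of $\A$, not of $\B$. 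Concretely, take $\A=\{a_1,a_2\}$ with $d(a_1,a_2)=1$, $f$ the swap, $\B=\A\cup\{b\}$ with $d(a_1,b)=2$, $d(a_2,b)=3$, $n=2$, and $g=\mathrm{id}_\B$; then $\partial(a_1,b^1)=2$ but $\partial(h(a_1),h(b^1))=\partial(a_2,b^2)=3$. So your check of~(ii) fails precisely at the $\A$-to-$\C_i$ distances, which you did not isolate as a separate case.

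The fix, and this is what the paper does, is to \emph{twist} the embeddings: set $\iota_i(x)=f^{-i}(x)$. Then $\partial(x,y^i)=d(f^{-i}(x),y)$ and
\[
\partial\bigl(h(x),h(y^i)\bigr)=\partial\bigl(f(x),y^{i+1}\bigr)=d\bigl(f^{-(i+1)}f(x),y\bigr)=d\bigl(f^{-i}(x),y\bigr)=\partial(x,y^i),
\]
which is exactly what is needed; the same twist makes the boundary case $i=n$ go through using $g|_\A=f^n$. (Incidentally, the case you flag as the ``main obstacle''---$x\in\B\setminus\A$ with $g(x)\in\A$---cannot occur: $g$ is a bijection of the finite set $\B$ with $g[\A]=\A$, hence $g[\B\setminus\A]=\B\setminus\A$.) With the twisted embeddings in place, the rest of your outline goes through, and your orbit-structure analysis in the first paragraph is not needed.
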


\begin{proof}
Let $\B_1=\ldots=\B_{n}=\B$ and define isometric embeddings $\iota_i\colon \A\inj\B_i$ by
$$
\iota_i(x)=f^{-i}(x).
$$
To distinguish between the different copies of $\B$, we let for $x\in \B\setminus \A$, $x^i$ denote the copy of $x$ in $\C_i=\B_i\setminus \iota_i[\A]=\B_i\setminus\A$. Note also that $\B=\B_1=\ldots=\B_n$ all have the same metric, which we denote by $d$.
We now define $h$ on $\bigsqcup_\A\B_l$ as follows.
\begin{itemize}
\item $h(x)=f(x)$ for $x\in \A$,
\item $h(x^i)=x^{i+1}$ for $x\in \B\setminus \A$ and $1\leq i<n$,
\item $h(x^n)=(gx)^1$ for $x\in \B\setminus \A$.
\end{itemize}
Now, obviously, $h$ is a permutation of $\A$ and for $1\leq i<n$, $h$ is a bijection between $\C_i$ and $\C_{i+1}$. Moreover, $h$ is a bijection between $\C_n$ and $\C_1$. Therefore, $h$ is a permutation of $\bigsqcup_\A\B_l$. We check that $h$ is $1$-Lipschitz.

Suppose first that $x,y\in \A$. Then
\begin{align*}
\partial(hx,hy)&=\partial(fx,fy)\\
&=d(\iota_ifx,\iota_ify)\\
&=d(f^{-i}fx,f^{-i}fy)\\
&=d(f^{1-i}x,f^{1-i}y)\\
&=d(f^{-i}x,f^{-i}y)\\
&=d(\iota_ix,\iota_iy)\\
&=\partial(x,y).
\end{align*}
Also, $h$ is clearly an isometry between $\C_i$ and $\C_{i+1}$ for $1\leq i< n$. So consider the case $\C_n$. Fix $x,y\in \B\setminus \A$. Then
\begin{align*}
\partial(h(x^n),h(y^n))&=\partial((gx)^1,(gy)^1)\\
&=d(gx,gy)\\
&=d(x,y)\\
&=\partial(x^n,y^n).
\end{align*}
Now, if $x\in \A$, $y\in \B\setminus \A$, and $1\leq i<n$, then
\begin{align*}
\partial(h(x),h(y^i))&=\partial(fx,y^{i+1})\\
&=d(\iota_{i+1}fx,y)\\
&=d(f^{-(i+1)} fx,y)\\
&=d(f^{-i}x,y)\\
&=d(\iota_ix,y)\\
&=\partial(x,y^i).
\end{align*}
Also, if $x\in \A$, $y\in \B\setminus \A$, then
\begin{align*}
\partial(h(x),h(y^n))&=\partial(fx,(gy)^1)\\
&=d(\iota_{1}fx,gy)\\
&=d(f^{-1}fx,gy)\\
&=d(x,gy)\\
&=d(g\inv x,y)\\
&=d(f^{-n}x,y)\\
&=d(\iota_nx,y)\\
&=\partial(x,y^n).
\end{align*}
And finally, if $x,y\in \B\setminus \A$ and $1\leq i< j\leq n$, we pick $z\in \A$ such that the distance $\partial(x^i,y^j)$ is witnessed by $z$, i.e.,
\begin{align*}
\partial(x^i,y^j)=d(x,\iota_iz)+d(\iota_jz,y)=d(x,f^{-i}z)+d(f^{-j}z,y).
\end{align*}
Assume first that $j<n$. Then
\begin{align*}
\partial(h(x^i),h(y^j))&=\partial(x^{i+1},y^{j+1})\\
&\leq d(x,\iota_{i+1}fz)+d(\iota_{j+1} fz,y)\\
&= d(x,f^{-i}z)+d(f^{-j}z,y)\\
&=\partial(x^i,y^j).
\end{align*}
And if $j=n$, we have
\begin{align*}
\partial(h(x^i),h(y^n))&=\partial(x^{i+1},(gy)^1)\\
&\leq d(x,\iota_{i+1}fz)+d(\iota_{1} fz,gy)\\
&= d(x,f^{-i}z)+d(z,f^ny)\\
&= d(x,f^{-i}z)+d(f^{-n}z,y)\\
&=\partial(x^i,y^n).
\end{align*}
Thus, $h$ is an isometry of $\bigsqcup_\A\B_l$.

Now see $g$ and $f$ as isometries of the first copy $\B_1$ of $\B$, i.e., $g(x^1)=(gx)^1$ for $x^1\in \C_1$. Let $\pi_1\colon \B_1\inj \bigsqcup_{\A}\B_l$ be the canonical isometric embedding defined by
\begin{itemize}
\item $\pi_1(x^1)=x^1$ for $x^1\in \C_1$,
\item $\pi_1(\iota_1x)=x$ for $x\in \A$.
\end{itemize}
To finish the proof, we need to show that the following diagram commutes
$$
\begin{CD}
\B_1 @>g>> \B_1\\
@V\pi_1VV   @VV\pi_1V\\
\bigsqcup_{\A}\B_l@>>h^n> \bigsqcup_{\A}\B_l
\end{CD}
$$
First, suppose $y=\iota_1x\in \A$. Then
\begin{align*}
h^n\pi_1y=&h^n\pi_1\iota_1x=h^nx=f^nx\\
=&\pi_1\iota_1f^nx
=\pi_1f\inv f^nx
=\pi_1f^nf\inv x\\
=&\pi_1f^n\iota_1x
=\pi_1f^ny
=\pi_1gy.
\end{align*}
Now suppose that $x\in \B\setminus \A$. Then
\begin{align*}
h^n\pi_1(x^1)=&h^n(x^1)=h(x^n)=(gx)^1
=\pi_1(gx)^1=\pi_1g(x^1).
\end{align*}
\end{proof}

\begin{prop}\label{power isometry}
Let $n\geq 1$. Then the generic isometry of
the rational Urysohn metric space is conjugate with its $n$th power.
\end{prop}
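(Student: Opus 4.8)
The plan is to transcribe the proof of Proposition~\ref{power} almost verbatim under the dictionary: finite equidistributed Boolean algebra becomes finite rational metric subspace of $\Q\U$; measure preserving automorphism leaving the algebra invariant becomes isometry leaving the subspace invariant; the passage ``automorphism of $\B_\infty$ via Stone duality'' becomes ``isometry of $\Q\U$ obtained as the union of a compatible increasing chain of finite partial isometries''; the appeals to equidistribution become appeals to the universality and ultrahomogeneity of $\Q\U$ together with Theorem~\ref{solecki}; and Proposition~\ref{divisibility} becomes Proposition~\ref{divisibility isometry}. Throughout, for $h\in {\rm Iso}(\Q\U)$ and finite $\A\subseteq\Q\U$ I will write $U(h,\A)=\{g\in{\rm Iso}(\Q\U)\del g|_\A=h|_\A\}$, and when $k$ is an isometry of a finite $\A\subseteq\Q\U$ with $k[\A]=\A$ I will write $U(k,\A)=\{g\in{\rm Iso}(\Q\U)\del g|_\A=k\}$.

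First I would establish the analogue of the $\pi$-basis observation from the proof of Proposition~\ref{power}: the sets $U(k,\B)$, where $\B\subseteq\Q\U$ is finite and $k$ is an isometry of $\B$ leaving $\B$ invariant, form a $\pi$-basis for ${\rm Iso}(\Q\U)$. Indeed, given $h$ and $\A$, the map $h|_\A\colon\A\til h[\A]$ is a partial isometry of the finite rational metric space $\A\cup h[\A]$, so by Theorem~\ref{solecki} it extends to an isometry $\hat h$ of a finite rational metric space $\B\supseteq\A\cup h[\A]$; by universality and ultrahomogeneity of $\Q\U$ we may realise $\B$ inside $\Q\U$ by an embedding extending the inclusion of $\A\cup h[\A]$, and then extend $\hat h$ to some $k\in{\rm Iso}(\Q\U)$. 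Then $\B$ is $k$-invariant and $U(k,\B)\subseteq U(h,\A)$.

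Next, let $C$ be the comeagre conjugacy class of ${\rm Iso}(\Q\U)$ (a consequence of Theorem~\ref{solecki}), write $C=\bigcap_iV_i$ with each $V_i$ dense open, and enumerate $\Q\U=\{x_0,x_1,x_2,\ldots\}$. Exactly as in Proposition~\ref{power} I would inductively construct finite rational metric subspaces $\A_0\subseteq\A_1\subseteq\ldots$ of $\Q\U$ and isometries $g_i,f_i$ of $\A_i$ leaving $\A_i$ invariant so that $x_i\in\A_{i+1}$, $g_{i+1}\supseteq g_i$, $f_{i+1}\supseteq f_i$, $g_i^n=f_i$, and $U(g_{i+1},\A_{i+1}),U(f_{i+1},\A_{i+1})\subseteq V_i$. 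The successor step is the same three moves: (i) enlarge to a finite $\B\subseteq\Q\U$ containing $x_i$ and $\A_i$ and carrying an isometry $h$ extending $g_i$ (using Theorem~\ref{solecki} and ultrahomogeneity); (ii) using that $V_i$ is dense open together with the $\pi$-basis fact, find $U(k,\C)\subseteq V_i$ with $\C\supseteq\B$ a $k$-invariant finite rational subspace and $k|_\B=h$; (iii) again using that $V_i$ is dense open, find $U(p,\D)\subseteq V_i$ with $\D\supseteq\C$ a $p$-invariant finite rational subspace and $p|_\C=k^n|_\C$. Then apply Proposition~\ref{divisibility isometry} to $\C\subseteq\D$, the isometry $k|_\C$ of $\C$ and the isometry $p|_\D$ of $\D$, which satisfy $(k|_\C)^n=p|_\C$: its proof in fact produces a finite rational $\E\supseteq\D$ and an isometry $q$ of $\E$ with $q|_\C=k|_\C$ and $q^n|_\D=p|_\D$. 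Setting $\A_{i+1}=\E$, $g_{i+1}=q$, $f_{i+1}=q^n$ verifies all requirements just as in Proposition~\ref{power}.

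Finally, $g=\bigcup_ig_i$ and $f=\bigcup_if_i$ are distance-preserving bijections of $\bigcup_i\A_i=\Q\U$, hence isometries of $\Q\U$; $g^n=f$; and $f,g\in\bigcap_iV_i=C$, so $g$ is conjugate to $g^n$, and since this is a conjugacy-invariant property it holds of the generic isometry of $\Q\U$. The only point needing care beyond mechanically transcribing the proof of Proposition~\ref{power} is the bookkeeping that the auxiliary abstract finite rational metric spaces furnished by Theorem~\ref{solecki} and by Proposition~\ref{divisibility isometry} can be realised inside $\Q\U$ compatibly with what has already been built, so that the $g_i$ and $f_i$ remain honest partial isometries of $\Q\U$; this is where universality and ultrahomogeneity of $\Q\U$ are invoked, and it is the main (and essentially routine) obstacle.
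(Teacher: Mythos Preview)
Your proposal is correct and follows essentially the same approach as the paper's own proof: the same $\pi$-basis observation via Theorem~\ref{solecki}, the same inductive construction of finite invariant subspaces $\A_i$ with isometries $g_i,f_i$ satisfying $g_i^n=f_i$, and the same three-move successor step culminating in an application of Proposition~\ref{divisibility isometry}. You even flag the one point the paper glosses over, namely that the needed property $q|_\C=k|_\C$ comes from the \emph{proof} of Proposition~\ref{divisibility isometry} rather than its statement, and the realisation of the auxiliary finite spaces inside $\Q\U$ via ultrahomogeneity.
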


Again, the reference to the generic isometry of the rational Urysohn metric space is justified by the existence of a comeagre conjugacy class in its isometry group, a fact established by Solecki in \cite{solecki}.

\begin{proof}A basic open set in ${\rm Iso}(\Q\U)$ is of the form
$$
U(h,\A)=\{g\in {\rm Iso}(\Q\U)\del g|_\A=h|_\A\},
$$
where $\A$ is a finite subspace  of $\Q\U$ and
$h\in {\rm Iso}(\Q\U)$. We claim that for any $U(h,\A)$ there
is some finite $\B\subseteq \Q\U$ containing
$\A$ and some isometry $k$ leaving $\B$
invariant, such that $U(k,\B)\subseteq U(h,\A)$.
For if $h$ and $\A$ are given, choose by Theorem \ref{solecki} some finite $\B\subseteq \Q\U$ containing both $\A$
and $h(\A)$ such that the partial isometry $h\colon \A\til h(\A)$ of $\A\cup h(\A)$ extends to an isometry $\hat h$ of $\B$. Let
$k$ be any isometry of $\Q\U$ that extends $\hat h$. Then $\B$ is $k$-invariant while $U(k,\B)\subseteq
U(h,\A)$.

Again, if $k$ is an isometry of some finite $\B\subseteq \Q\U$, we let $U(k,\B)=\{g\in {\rm Iso}(\Q\U)\del g|_\B=k\}$.

Let now $C$ be the comeagre conjugacy class of ${\rm Iso}(\Q\U)$  and find dense open sets $V_i\subseteq {\rm Iso}(\Q\U)$ such that $C=\bigcap_iV_i$.  Enumerate the points of $\Q\U$ as $a_0,a_1,a_2,\ldots$. We shall define a sequence
of finite subsets $\A_0\subseteq \A_1\subseteq
\A_2\subseteq \ldots\subseteq \Q\U$ and isometries $g_i$ and
$f_i$ of $\A_i$ such that
\begin{itemize}
  \item[(1)] $a_i\in \A_{i+1}$,
  \item[(2)] $g_{i+1}$ extends $g_i$,
  \item[(3)] $f_{i+1}$ extends $f_i$,
  \item[(4)] $g_i^n=f_i$,
  \item[(5)] $U(g_{i+1},\A_{i+1})\subseteq V_{i}$,
  \item[(6)] $U(f_{i+1},\A_{i+1})\subseteq V_{i}$.
\end{itemize}
To begin, let $\A_0=\tom$ with trivial isometries $g_0=f_0$. So suppose $\A_i$, $g_i$, and $f_i$ are defined. We let
$\B\subseteq \Q\U$ be a finite subset containing both $a_i$  and $\A_i$
and such that there is some isometry $h$ of $\B$ extending $g_i$. As $V_i$ is
dense open we can find some $U(k,\C)\subseteq V_i$, where $\C\subseteq \Q\U$ is a
$k$-invariant finite set containing $\B$ and $k$
extends $h$. Again, as $V_i$ is dense open, we can  find some $U(p,
\D)\subseteq V_i$, where $\D\subseteq \Q\U$ is a finite set
containing $\C$, $p$ an isometry of $\Q\U$ leaving $\D$
invariant and extending $k^n|_\C$.

Now, by Proposition \ref{divisibility isometry}, we can find a finite subset $\E\subseteq \Q\U$ containing $\D$ and an isometry $q$
of $\E$  extending $k|_\C$ such that $q^n$ extends $p|_\D$. Finally,
set $\A_{i+1}=\E$,
$$
g_{i+1}=q\supseteq k|_\C\supseteq h\supseteq g_i,
$$
and
$$
f_{i+1}=q^n\supseteq p|_\D\supseteq k^n|_\C\supseteq g_i^n=f_i.
$$
Then $U(g_{i+1},\A_{i+1})\subseteq U(k,\C)\subseteq V_n$ and
$U(f_{i+1},\A_{i+1})\subseteq U(p,\D)\subseteq V_n$.

Set now $g=\bigcup_ig_i$ and $f=\bigcup_if_i$. By (1),(2), and (3),
$f$ and $g$ are isometries of $\Q\U$.
And by (4), $g^n=f$, while by (5) and (6), $f, g\in \bigcap_iV_i=C$.
Thus, $f$ and $g$ belong to the comeagre conjugacy class and are
therefore mutually conjugate.
\end{proof}

Now in exactly the same way as for measure preserving homeomorphisms, we can prove

\begin{thm}\label{isometries}
Let $n\neq 0$. Then the generic isometry of
the rational Urysohn metric space is conjugate with its $n$'th power and hence has roots
of all orders.

Thus, for the generic isometry $g$, there is
an action of $(\Q,+)$ by isometries of $\Q\U$
such that $g$ is the action by $1\in \Q$.
\end{thm}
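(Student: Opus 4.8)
The plan is to mimic exactly the argument already given for measure preserving homeomorphisms in Theorem \ref{homeomorphisms}, replacing Proposition \ref{power} by Proposition \ref{power isometry} and Proposition \ref{inverse} by the observation that Proposition \ref{inverse} applies verbatim to any Polish group with a comeagre conjugacy class, in particular to ${\rm Iso}(\Q\U)$ by Solecki's theorem. First I would note that Proposition \ref{inverse} gives that the generic isometry $g$ is conjugate to $g\inv$; combined with Proposition \ref{power isometry} (which handles positive powers) this yields that $g$ is conjugate to $g^m$ for every $m\neq 0$: indeed for $m=-n$ with $n\geq 1$, $g\inv$ is generic, hence conjugate to $(g\inv)^n=g^{-n}$, so $g$ is conjugate to $g^{-n}$.

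Next I would extract roots. Since $g$ is generic and conjugate to $g^n$ via some $f$, i.e.\ $fg^nf\inv=g$, the element $fgf\inv$ is a generic $n$th root of $g$: $(fgf\inv)^n=fg^nf\inv=g$, and conjugates of generic elements are generic. Iterating, choose generic $g=g_1,g_2,g_3,\dots$ with $(g_{n+1})^{n+1}=g_n$. Then define the map $\Q\til {\rm Iso}(\Q\U)$ by $\frac{k}{n!}\mapsto (g_n)^k$ for $k\in\Z$, $n\geq 1$. The consistency check is that this is well defined: if $\frac{k}{n!}=\frac{k'}{(n')!}$ with $n\leq n'$, then $k'=k\cdot\frac{(n')!}{n!}$, and since $g_{n'}^{(n')!/n!}=g_n$ (telescoping the relations $(g_{j+1})^{j+1}=g_j$), we get $(g_{n'})^{k'}=(g_n)^k$. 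That the map is a homomorphism of $(\Q,+)$ follows since any two rationals can be written with a common denominator $n!$, and $(g_n)^k(g_n)^{l}=(g_n)^{k+l}$; injectivity holds because $g=g_1$ has infinite order (it is conjugate to all its nonzero powers, none of which can be the identity as $g$ is nontrivial, being generic). Hence $g$ lies in the image of an action of $(\Q,+)$ by isometries of $\Q\U$ with $1\mapsto g$.

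I do not expect any genuine obstacle here: the entire content is already in Proposition \ref{divisibility isometry} and Proposition \ref{power isometry}, and the passage from "conjugate to powers" to "embeds in a $\Q$-action" is the purely group-theoretic bookkeeping carried out identically in the proof of Theorem \ref{homeomorphisms}. The one point deserving a sentence of care is the well-definedness of $\frac{k}{n!}\mapsto (g_n)^k$, which rests on the telescoping identity $g_{n'}=(g_{n'})^{(n')!/n!}\cdot\text{(nothing)}$, i.e.\ repeatedly applying $(g_{j+1})^{j+1}=g_j$ down from $n'$ to $n$; this is exactly the computation "$\frac{k}{n!}\mapsto g_n^k$" appearing at the end of the proof of Theorem \ref{homeomorphisms}, so I would simply say "in exactly the same way as for measure preserving homeomorphisms" and refer back to that proof.
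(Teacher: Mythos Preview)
Your proposal is correct and follows exactly the approach the paper takes: the paper itself gives no separate proof for Theorem \ref{isometries} beyond the remark ``in exactly the same way as for measure preserving homeomorphisms,'' and your write-up simply spells out that same argument, replacing Proposition \ref{power} by Proposition \ref{power isometry} and invoking Proposition \ref{inverse} for ${\rm Iso}(\Q\U)$. Your added justifications of well-definedness and injectivity of $\frac{k}{n!}\mapsto g_n^k$ are a bit more explicit than the paper's own account, but entirely in the same spirit.
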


\subsection{Actions of $\aaa$ by isometries on $\Q\U$.}\label{adic2}
In a similar manner as for measure preserving homeomorphisms, it is now possible to show that any generic isometry extends to an action of the ring $\aaa$.

\begin{thm}\label{locally compact completion for isometries}
Let $g$ be a generic element of ${\rm Iso}(\Q\U)$. Then $1\in \Q\mapsto g\in{\rm Iso}(\Q\U)$ extends to a homeomorphic embedding of $(\aaa,+)$ into ${\rm Iso}(\Q\U)$.
\end{thm}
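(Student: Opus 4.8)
The plan is to mimic exactly the proof of Theorem \ref{locally compact completion}, replacing the measure-preserving-homeomorphism machinery by its isometric counterpart which has already been set up in Section 3. Recall that the proof in the Cantor space case rested on three lemmas: Lemma \ref{caf} (two generic elements agreeing on a finite invariant subalgebra and mapping it to itself are conjugate by an element fixing that subalgebra), Lemma \ref{fine root} (a combinatorial root construction preserving the period of a distinguished element), and Lemma \ref{period} (a generic element has a generic $n$th root with respect to which a given clopen set of $g$-period $k$ acquires period $kn$). The first step is therefore to establish the analogues of these three lemmas for ${\rm Iso}(\Q\U)$.

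First I would prove the analogue of Lemma \ref{fine root}: given a finite rational metric space $\B$, an isometry $g$ of $\B$, and a point $b\in\B$ of $g$-period $k$, form the free amalgam $\C=\bigsqcup_\A\B_l$ of $n$ copies of $\B$ over a suitable common subspace $\A$ (here one mimics the construction in Proposition \ref{divisibility isometry}, where $\iota_i(x)=f^{-i}(x)$, except that the present situation is the easier case where one only needs $h^n|_\B=g$ with $g$ an isometry of $\B$ itself, so $f=g$ and $\A=\B$; the cyclic shift $h$ on the amalgam then satisfies $h^n|_\B=g$). Checking that $b$, viewed in the last copy, has $h$-period exactly $kn$ is the same bookkeeping argument as in Lemma \ref{fine root}: $h^{nl}(b)=g^l(b)$ equals $b$ iff $k\mid l$, and if $n\nmid m$ then the last-coordinate projection of $h^m(b)$ is not $b$. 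Next, the analogue of Lemma \ref{caf}: if $\B\subseteq\Q\U$ is finite, $g,h$ are generic isometries with $g[\B]=h[\B]=\B$ and $g|_\B=h|_\B$, then $g$ and $h$ are conjugate by an element of ${\rm Iso}(\Q\U)_\B$. Here one uses that $U(g,\B)=U(h,\B)$ is open and invariant under conjugation by ${\rm Iso}(\Q\U)_\B$, invokes Proposition 3.2 of \cite{turbulence} to get that the relevant conjugacy orbits are comeagre in neighbourhoods of $g$ and of $h$, picks $g$- and $h$-invariant finite extensions $\C,\D$ of $\B$, forms the free amalgam $\C\sqcup_\B\D$ with the isometry $k(c\sqcup d)=g(c)\sqcup h(d)$ (well defined since $g,h$ agree on $\B$ and preserve it), embeds it into $\Q\U$ using the homogeneity/extension property of $\Q\U$, and produces the conjugating element $f\in{\rm Iso}(\Q\U)_\B$ exactly as in the proof of Lemma \ref{caf}. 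Combining these with Proposition \ref{power isometry} gives the analogue of Lemma \ref{period}: a generic $g$ with $b$ of $g$-period $k$ has, for each $n\geq 1$, a generic $n$th root $f$ with respect to which $b$ has period $kn$.

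Given these three lemmas, the proof of the theorem follows the template of Theorem \ref{locally compact completion} verbatim. Fix a generic $g$, fix a point $b\in\Q\U$ with $g b=b$ (note the trivial isometry fixes every point; alternatively take any $b$ of finite $g$-period, which exists since generic isometries have orbits of all finite orders by the $\pi$-basis argument — that $\langle g\rangle$ is profinite in ${\rm Iso}(\Q\U)$ is the isometry analogue of the remark preceding Theorem \ref{locally compact completion}, using Theorem \ref{solecki}). Inductively build generic $g=g_1,g_2,g_3,\ldots$ with $(g_{n+1})^{n+1}=g_n$ and $b$ of $g_n$-period $n!$. Then examining $(g_{n_i})^{k_i}(b)$ shows: if $(g_{n_i})^{k_i}\to e$ then $n_i!\mid k_i$ for all but finitely many $i$, so eventually $(g_{n_i})^{k_i}=g^{k_i/n_i!}$, and using the characterization $(*)$ of convergence to $e$ in $\overline{\langle g\rangle}\iso\hat\Z$ one gets
$$
(g_{n_i})^{k_i}\Lim{i\til\infty}e \quad\equi\quad \Big\|\frac{k_i}{n_i!}\Big\|\til 0.
$$
Hence the embedding $\frac k{n!}\mapsto g_n^k$ of $\Q$ into ${\rm Iso}(\Q\U)$ pulls back the subspace topology to the $d$-topology on $\Q$, and since ${\rm Iso}(\Q\U)$ is Polish (hence the relevant completeness applies) the embedding extends continuously to the $d$-completion of $\Q$, which by the last subsection is topologically isomorphic to $(\aaa,+)$.

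The main obstacle is the analogue of Lemma \ref{caf}: the amalgamation argument there relied on embedding $\C\otimes_\B\D$ into $\B_\infty$ so that the copy of $\C$ is fixed setwise and $k'|_\C=g|_\C$, and then finding $f$ realizing $d\mapsto\iota(b\otimes d)$. In the isometric setting the corresponding facts — that the free amalgam $\C\sqcup_\B\D$ of finite rational metric spaces embeds into $\Q\U$, and that any such embedding extends to a full isometry, and that a partial map of finite subsets extends to an element of ${\rm Iso}(\Q\U)_\B$ — are exactly what the universality and ultrahomogeneity of $\Q\U$ provide, so no essentially new difficulty arises; the only care needed is to verify that the free amalgam $\C\sqcup_\B\D$ is again a rational metric space (immediate, since all four clauses in the definition of $\partial$ produce rational values) and that $k(c\sqcup d)=g(c)\sqcup h(d)$ is a well-defined isometry, which is the same short computation as in Lemma \ref{caf}. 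Everything else is bookkeeping parallel to Section \ref{adic1}.
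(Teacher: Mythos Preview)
Your plan is exactly the paper's: isolate the three analogues of Lemmas \ref{caf}, \ref{fine root}, \ref{period} and then copy the proof of Theorem \ref{locally compact completion}. The paper in fact only states these lemmas without proof, so your sketches fill in more than is written there.

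One of those sketches contains a genuine error. For the analogue of Lemma \ref{fine root} you propose to run Proposition \ref{divisibility isometry} with $\A=\B$ and $f=g$, but then $\iota_i[\A]=g^{-i}[\B]=\B$, each $\C_i=\B_i\setminus\iota_i[\A]$ is empty, and the amalgam $\bigsqcup_\A\B_l$ collapses to $\B$ itself with $h=f=g$, giving $h^n=g^n$ rather than $g$. The underlying confusion is that the metric free amalgam is a \emph{union}, not a product like the Boolean tensor $\B\otimes\cdots\otimes\B$, so there is no ``last-coordinate projection'' to invoke. The fix is small: take $\A=\{*\}$ to be a single $g$-fixed point (adjoining one to $\B$ at a constant rational distance $D>\tfrac12\,{\rm diam}(\B)$ if $g$ has none) and $f={\rm id}_\A$. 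Then all $\iota_i$ agree, the amalgam is $n$ copies of $\B\setminus\{*\}$ glued at $*$, the cyclic shift $h$ of Proposition \ref{divisibility isometry} satisfies $h^n|_\B=g$, and whenever $n\nmid m$ the point $h^m(b^1)$ lies in a copy other than the first, so $b$ acquires $h$-period $kn$ as required. A smaller notational remark on your Lemma \ref{caf} analogue: the metric amalgam $\C\sqcup_\B\D$ has no ``product elements'', so $k(c\sqcup d)=g(c)\sqcup h(d)$ should simply be read as $k|_\C=g|_\C$ and $k|_\D=h|_\D$; that this is an isometry of the amalgam is then a one-line check from clause (4) of the definition of $\partial$.
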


Since this is done almost exactly as for measure preserving homeomorphisms, modulo replacing dyadic, equidistributed Boolean algebras with finite metric spaces, we shall not overextend our claims to the reader's attention and instead just give the exact statements of the needed lemmas.

\begin{lemme}
Let $\B\subseteq \Q\U$ be a finite subset and suppose $g,h$ are generic elements of ${\rm Iso}(\Q\U)$ with $g[\B]=h[\B]=\B$ and $g|_\B=h|_\B$. Then $g$ and $h$ are conjugate by an element of ${\rm Iso}(\Q\U)_\B=U(e,\B)$.
\end{lemme}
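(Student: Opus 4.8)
The plan is to mimic the proof of Lemma \ref{caf} almost verbatim, replacing dyadic equidistributed Boolean algebras and their free amalgams with finite metric subspaces of $\Q\U$ and the free amalgam $\bigsqcup_\B$ of metric spaces described in the previous subsection. The open set $U(g,\B)=U(h,\B)$ is invariant under the conjugation action of ${\rm Iso}(\Q\U)_\B=U(e,\B)$, and, since $g$ and $h$ are generic, Proposition 3.2 of \cite{turbulence} gives that the orbits $X=\{fgf\inv\del f\in {\rm Iso}(\Q\U)_\B\}$ and $Y=\{fhf\inv\del f\in{\rm Iso}(\Q\U)_\B\}$ are comeagre in neighbourhoods $V$ of $g$ and $W$ of $h$ respectively. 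Pick finite sets $\C\supseteq\B$ and $\D\supseteq\B$ with $\C$ $g$-invariant, $\D$ $h$-invariant, $U(g,\C)\subseteq V$, and $U(h,\D)\subseteq W$. As in the measure case, it suffices to produce an $f\in{\rm Iso}(\Q\U)_\B$ with $f\inv U(g,\C)f\cap U(h,\D)\neq\tom$, since then $f\inv Xf\cap Y\neq\tom$ forces $X\cap Y\neq\tom$, giving the desired conjugator.

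First I would form the free amalgam $\bigsqcup_\B(\C,\D)$ of $\C$ and $\D$ over $\B$ (using the inclusions $\B\hookrightarrow\C$, $\B\hookrightarrow\D$) and define a permutation $k$ of it by $k(c)=g(c)$ for $c\in\C$ and $k(d)=h(d)$ for $d\in\D$; this is well defined on the common part $\B$ because $g|_\B=h|_\B$, and one must check it is an isometry of $\partial$ — the only nontrivial case is a pair $x\in\C\setminus\B$, $y\in\D\setminus\B$ whose distance is witnessed by some $z\in\B$, and there $g$ on the $\C$-side and $h$ on the $\D$-side agree with $g|_\B=h|_\B$ on the witness, so $\partial(kx,ky)\le\partial(x,y)$, and applying the same to $k\inv$ gives equality. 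Next, embed the subspace $\C$ (sitting inside $\bigsqcup_\B(\C,\D)$ via $\pi_\C$) into $\Q\U$ by the identity on $\C$, and extend this, point by point using the extension property of $\Q\U$, to an isometric embedding $\iota$ of all of $\bigsqcup_\B(\C,\D)$ into $\Q\U$; set $k'=\iota k\iota\inv$, an isometry of $\iota[\bigsqcup_\B(\C,\D)]$ with $k'|_\C=g|_\C$, and extend $k'$ arbitrarily to a full isometry of $\Q\U$, so $k'\in U(g,\C)$.

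Finally I would construct $f\in{\rm Iso}(\Q\U)$ with $f(d)=\iota(\pi_\D(d))$ for $d\in\D$; this is a partial isometry fixing $\B$ pointwise (since on $\B$ both $f$ and $\iota\circ\pi_\D$ restrict to the inclusion $\B\hookrightarrow\Q\U$ coming through the amalgam), so it extends to an isometry fixing $\B$, i.e.\ $f\in{\rm Iso}(\Q\U)_\B$. Then for $d\in\D$,
$$
f\inv k'f(d)=f\inv\iota k\iota\inv\iota\pi_\D(d)=f\inv\iota k(\pi_\D d)=f\inv\iota(\pi_\D h(d))=h(d),
$$
so $f\inv k'f|_\D=h|_\D$, whence $f\inv k'f\in U(h,\D)$. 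Thus $k'\in U(g,\C)$ and $f\inv k'f\in U(h,\D)$, so $f\inv U(g,\C)f\cap U(h,\D)\neq\tom$, completing the argument. The main obstacle, such as it is, is purely bookkeeping: keeping straight the three identifications — the canonical embeddings $\pi_\C,\pi_\D$ into the amalgam, the embedding $\iota$ of the amalgam into $\Q\U$, and the original copies $\C,\D\subseteq\Q\U$ — so that $k'|_\C=g|_\C$ holds on the nose and $f$ genuinely fixes $\B$ pointwise; the verification that $k$ is an isometry of $\partial$ is routine given the explicit formula (4) for $\partial$, and the existence of $\iota$ and of $f$ are immediate from the extension/homogeneity properties of $\Q\U$.
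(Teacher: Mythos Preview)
Your proposal is correct and follows exactly the approach the paper intends: the paper does not give a separate proof of this lemma but explicitly says it is done ``almost exactly as for measure preserving homeomorphisms, modulo replacing dyadic, equidistributed Boolean algebras with finite metric spaces,'' and your argument is precisely the line-by-line translation of Lemma~\ref{caf} using the free amalgam $\bigsqcup_\B(\C,\D)$ in place of $\C\otimes_\B\D$. The only point you add beyond the paper's outline is the (necessary) verification that the induced map $k$ is an isometry of $\partial$, which you handle correctly via the witnessing point $z\in\B$ and the fact that $g|_\B=h|_\B$.
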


\begin{lemme}
Suppose $\B$ is a finite rational metric space, $g$ an isometry of $\B$ and $b\in \B$ is a point having $g$-period $k$, i.e.,
$g^i(b)=b$ if and only if $k$ divides $i$. Then for any $n\geq 1$ there is a finite rational metric space $\C\supseteq \B$ and an isometry $h$ of $\C$ such that $h^n|_\B=g$ and $b$ has $h$-period $kn$.
\end{lemme}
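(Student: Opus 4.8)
The plan is to mimic the Boolean-algebra construction in Lemma \ref{fine root}, but using the free amalgam of metric spaces described in the preceding subsection in place of the tensor product of Boolean algebras. Given a finite rational metric space $\B$, an isometry $g$ of $\B$, and a point $b\in\B$ of $g$-period $k$, I would set $\C=\bigsqcup_\A\B_l$ where $\A=\{\text{a one-point space}\}$, $\B_1=\dots=\B_n=\B$, and all the $\iota_i$ are the (unique) embedding of the point; in other words $\C$ is the ``free'' metric amalgam of $n$ disjoint copies of $\B$ glued along a single common point. Identifying $\B$ with the last copy $\B_n$ via $\pi_n$, I define the cyclic shift
$$
h(x^{i})=x^{i+1}\quad(1\le i<n),\qquad h(x^{n})=(gx)^{1},
$$
on the non-basepoint elements, and $h$ fixes the common basepoint. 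As in Proposition \ref{divisibility isometry}, one checks via the four defining clauses of the amalgam metric $\partial$ that $h$ is $1$-Lipschitz, hence (being a permutation of a finite space) an isometry of $\C$; this is the only computational part and it is entirely parallel to the verification already carried out for $h$ in the proof of Proposition \ref{divisibility isometry}, so I would simply refer to that argument rather than repeat it.

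Next I verify the two conclusions. For $h^{n}|_\B=g$: for $x\in\B$, following the shift $n$ times sends the copy $x^{n}$ (i.e.\ the element $x$ of the identified last factor) through $(gx)^1\mapsto (gx)^2\mapsto\dots\mapsto (gx)^n$, so $h^n(x^n)=(gx)^n$, which under the identification $\pi_n$ is exactly $gx$; hence $h^n|_\B=g$. For the period of $b$: note $b$ sits in the last copy as $b^{n}$, and $h^{nl}(b^{n})=(g^{l}b)^{n}$, which equals $b^{n}$ iff $g^{l}(b)=b$ iff $k\mid l$; on the other hand, if $n\nmid m$ then $h^{m}(b^{n})$ lands in a copy $\B_j$ with $j\ne n$ (or, if $n\mid m$ is used up, in a different-indexed copy), so it is not in $\B_n$ and in particular $h^m(b^n)\ne b^n$. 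Combining these two observations, the least positive $m$ with $h^{m}(b^{n})=b^{n}$ is exactly $nk$, i.e.\ $b$ has $h$-period $nk$.

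I do not anticipate a serious obstacle here: the only delicate point is making sure the amalgam metric $\partial$ really is a metric and that $h$ respects it, but clause (4) of the amalgam definition together with the fact that $g$ is an isometry fixing the basepoint makes this a routine triangle-inequality check identical in structure to the one in Proposition \ref{divisibility isometry}. The mild bookkeeping nuisance is that the basepoint subspace $\A$ must be chosen $g$-invariant — here it is a single $g$-fixed point, which is automatic if $b$ itself is available as a fixed point, and otherwise one first passes to the minimal $g$-invariant superset and argues on each orbit; but in the intended application (Lemma \ref{period}'s analogue) $g$ is first replaced by its restriction to the minimal $g$-invariant finite subspace containing $b$, so this is handled upstream. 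Thus the proof is a direct transcription of the Boolean case with ``tensor product'' replaced by ``free metric amalgam'' and ``measure-preserving automorphism'' replaced by ``isometry''.
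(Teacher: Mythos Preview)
Your overall strategy—take $n$ copies of $\B$, shift cyclically, and apply $g$ when wrapping around—is exactly the metric analogue of the Boolean construction in Lemma \ref{fine root}, and the paper itself offers no explicit proof beyond pointing to that analogy. So the plan is the right one.

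However, there is a genuine gap in your implementation. Your amalgam $\bigsqcup_\A\B_l$ is taken over a one-point space $\A=\{p\}$, and for the shift $h$ to be an isometry you need, in the case $x\in\C_i$ with $i<n$ and $y\in\C_n$,
\[
\partial(h(x^i),h(y^n))=\partial(x^{i+1},(gy)^1)=d(x,p)+d(p,gy)
\]
to equal $\partial(x^i,y^n)=d(x,p)+d(p,y)$. This forces $d(p,gy)=d(p,y)$ for all $y$, i.e.\ $g(p)=p$. But an isometry of a finite metric space need not have a fixed point (e.g.\ a $k$-cycle on $k$ equidistant points), so in general no such $p$ exists. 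Your suggested workaround—``pass to the minimal $g$-invariant superset and argue on each orbit,'' or rely on the restriction made ``upstream''—does not manufacture a fixed point. Worse, when $k=1$ and you take $p=b$, the basepoint is fixed by $h$, so $b$ would get $h$-period $1$ rather than $n$; thus even in the case you call ``automatic'' the construction fails as stated.

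The fix is easy and does not disturb the rest of your argument. Either (i) first adjoin to $\B$ a new point $p$ at a fixed rational distance $r\ge\tfrac12\mathrm{diam}(\B)$ from every point, extend $g$ by $g(p)=p$, and then run your construction with this $p$ as basepoint (since $p\neq b$, the period computation goes through); or (ii) dispense with the amalgam altogether and take $\C$ to be the disjoint union of $n$ copies of $\B$ with all inter-copy distances set equal to some rational $M\ge\mathrm{diam}(\B)$. In (ii) the cyclic shift $h$ is trivially an isometry, $h^n|_\B=g$ under the identification of $\B$ with the last copy, and $h^m(b^n)=b^n$ iff $n\mid m$ and $k\mid m/n$, giving $h$-period $nk$. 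Either route repairs the proof with no further changes.
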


\begin{lemme}
Suppose $g\in {\rm Iso}(\Q\U)$ is generic and $b\in \Q\U$ has $g$-period $k$. Then for any $n\geq 1$ there is a generic $f$ such that $g=f^n$ and $b$ has $f$-period $kn$.
\end{lemme}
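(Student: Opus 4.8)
The plan is to imitate, essentially word for word, the proof of Lemma~\ref{period}, using finite rational metric subspaces of $\Q\U$ in place of dyadic equidistributed Boolean algebras and the three corresponding ingredients: the second lemma above (the metric analogue of Lemma~\ref{fine root}), Proposition~\ref{power isometry} (the analogue of Proposition~\ref{power}), and the first lemma above (the analogue of Lemma~\ref{caf}).

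First I would let $\B=\{g^i(b)\del i\in\Z\}$ be the $g$-orbit of $b$; since $b$ has $g$-period $k$ this is a finite, $g$-invariant subset of $\Q\U$, and $g$ restricts to an isometry $g|_\B$ of $\B$ with respect to which $b$ again has period $k$. Note that, in contrast with Lemma~\ref{period}, no passage to a minimal invariant substructure is required, since an orbit is already invariant; this step is if anything simpler than in the Boolean case. Applying the second lemma above to $\B$, the isometry $g|_\B$, the point $b$, and $n$, I obtain a finite rational metric space $\C\supseteq\B$ and an isometry $\tilde h$ of $\C$ with $\tilde h^n|_\B=g|_\B$ and such that $b$ has $\tilde h$-period $kn$.

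Next I would extend $\tilde h$ to a generic isometry $h$ of $\Q\U$ --- possible because $U(\tilde h,\C)$ is a non-empty open subset of ${\rm Iso}(\Q\U)$ and so meets the comeagre conjugacy class. Since $h$ extends the bijection $\tilde h$ of $\C$ and $b\in\C$, the entire $h$-orbit of $b$ remains inside $\C$ and agrees there with the $\tilde h$-orbit; hence $b$ retains $h$-period $kn$. Also $h^n|_\B=\tilde h^n|_\B=g|_\B$, so in particular $h^n[\B]=g[\B]=\B$, and $h^n$ is itself generic by Proposition~\ref{power isometry}. I may therefore apply the first lemma above to the finite set $\B$ and the two generic elements $g$ and $h^n$, which satisfy $g[\B]=h^n[\B]=\B$ and $g|_\B=h^n|_\B$, to obtain $s\in{\rm Iso}(\Q\U)_\B$ with $sh^ns\inv=g$. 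Putting $f=shs\inv$ then yields $f^n=sh^ns\inv=g$; $f$ is conjugate to the generic element $h$ and hence generic; and since $s$ fixes $\B$, in particular $s(b)=b$, so $f^m(b)=sh^m(b)$ equals $b$ if and only if $h^m(b)=b$, whence $b$ has $f$-period $kn$. I do not expect any genuine obstacle beyond the bookkeeping already carried out in the measure-preserving case: the only points requiring care are that extending $\tilde h$ off $\C$ cannot change the period of $b$, and that conjugating by an isometry that fixes $b$ preserves periods --- both of which are immediate.
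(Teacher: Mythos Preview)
Your proposal is correct and follows exactly the approach the paper intends: the paper explicitly declines to write out this proof, saying it is done ``almost exactly as for measure preserving homeomorphisms, modulo replacing dyadic, equidistributed Boolean algebras with finite metric spaces,'' and your argument is precisely that translation of Lemma~\ref{period}. The only point you leave implicit is that the abstract $\C$ produced by the metric analogue of Lemma~\ref{fine root} must be realised inside $\Q\U$ over $\B$ before one can speak of $U(\tilde h,\C)$, but this is immediate from the extension property of $\Q\U$ and the paper treats it the same way elsewhere.
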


\section{Topological similarity and Rohlin's Lemma for isometries}

Suppose $G$ is a Polish group and $f,g\in G$. We say that $f$ and $g$ are {\em
topologically similar} if the topological groups $\langle f\rangle\leq G$ and
$\langle g\rangle\leq G$ are isomorphic. We should note here that $\langle f\rangle$ refers to the cyclic group generated by $f$ and not its closure. By the completeness of Polish groups, if $\langle f\rangle$ and $\langle g\rangle$ are isomorphic, then so are $\overline{\langle f\rangle}$ and $\overline{\langle g\rangle}$, but not vice versa (for an example of this, one can consider irrational rotations of the circle).

Notice first that any $f$ is topologically similar to $f\inv$. For if
$\psi(f^n)=f^{-n}$, then $\psi$ is an involution homeomorphism, since inversion
is continuous in $G$. Of course, if $f$ and $g$ have infinite order, then any
isomorphic homeomorphism $\phi$ between $\langle f\rangle$ and $\langle
g\rangle$ must send the generators to the generators and so either $\phi(f)=g$
or $\phi(f)=g\inv$. But then composing with $\psi$ we can always suppose that
$\phi(f)=g$.

Moreover, to see that $\phi\colon f^n\mapsto g^n$ is a topological group isomorphism between $\langle f\rangle$ and $\langle g\rangle$, it is enough to check continuity at the identity $e$ of both $\phi$ and $\phi\inv$. But, letting $\{U_i\}_{i\in \N}$ be an open neighbourhood basis at $e$ in $G$, this clearly holds if and only if
$$
(*)\quad \a i\; \e j\; \a n\; \big[(f^n\in U_j\til g^n\in U_i)\;\&\;(g^n\in U_j\til f^n\in U_i)\big].
$$
Notice also that as  $\langle f\rangle$ and $\langle g\rangle$ are metrisable, $f$ and $g$ are
topologically similar if and only if for all increasing sequences
$(s_n)\subseteq \N$, $f^{s_n}\Lim{n}e$ if and only if $g^{s_n}\Lim{n}e$.
By $(*)$, topological similarity is a Borel equivalence relation. Actually, it is ${\bf \Pi}^0_3$, which can be seen by noting that $(*)$ is equivalent to
$$
\a i\; \e j\; \a n\; \big[(f^n\notin U_j\;\vee\; g^n\in \overline{U_i})\;\&\;(g^n\notin U_j\;\vee\; f^n\in \overline{U_i})\big].
$$
We
notice also that topological similarity is really independent of the ambient
group $G$. For example, if $G$ is topologically embedded into another Polish
group $H$, then $f$ and $g$ are topologically similar in $G$ if and only if
they are topologically similar in $H$.

Topological similarity is an obvious invariant for conjugacy, that is, if there
is any way to make $f$ and $g$ conjugate in some Polish group, then they have
to be topologically similar.

Of particular interest are the cases $G={\rm Aut}([0,1],\lambda)$,
$G=U(\ell_2)$, and $G={\rm Iso}(\U)$. We recall that the group ${\rm Aut}([0,1],\lambda)$ of Lebesgue measure preserving automorphisms of the unit interval is equipped with the so called {\em weak topology}: It is the weakest topology such that for all Borel sets $A,B\subseteq [0,1]$ the map $g\mapsto\lambda(gA\triangle B)$ is continuous.
Also, ${\rm Aut}([0,1],\lambda)$
sits inside of $U(\ell_2)$ via the Koopman representation and two measure
preserving transformations $f$ and $g$ are said to be {\em spectrally
equivalent} if they are conjugate in $U(\ell_2)$. By the spectral theorem,
spectral equivalence is Borel. Also, topological similarity is strictly coarser than
spectral equivalence. To see this, we notice that mixing is not a topological
similarity invariant, whereas it is a spectral invariant. For if $f$ is mixing,
then the automorphism $f\oplus {\rm id}$ is a non-mixing transformation of
$[0,1]\oplus[0,1]$ but generates a discrete subgroup of ${\rm Aut}([0,1]\oplus
[0,1],\lambda\oplus \lambda)$. So taking a transformation $h\in {\rm
Aut}([0,1],\lambda)$ conjugate with $f\oplus {\rm id}$, we see that $f$ and $h$
are topologically similar, since they both generate discrete groups.
A survey
of the closely related topic of topological torsion elements in topological
groups is given by Dikranjan in \cite{dikranjan}.

\begin{prop}Let $G$ be a non-trivial Polish group  such that for all infinite $S\subseteq \N$ and neighbourhoods $V\ni e$ the set
$\mathbb A(S,V)=\{g\in G\del \e s\in S\; g^s\in V\}$ is dense. Then every topological similarity class of $G$ is meagre.

Moreover, for every infinite  $S\subseteq \N$ the set
$$
\mathbb C(S)=\{g\in G\del \e (s_n)\subseteq S\;\;  g^{s_n}\Lim{n} e\}
$$
is dense $G_\delta$ and invariant under topological similarity.
\end{prop}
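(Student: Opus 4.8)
The plan is to prove the two assertions separately, since the second one (density and $G_\delta$-ness of $\mathbb{C}(S)$, plus invariance) is the substantive part, and the first (meagreness of topological similarity classes) follows quickly from it together with a Baire category argument.

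First I would dispose of the easy structural facts. Topological similarity is clearly an invariant of $\mathbb{C}(S)$: if $f$ and $g$ are topologically similar, then by the characterization via increasing sequences recalled just above the statement, $g^{s_n}\to e$ for some increasing $(s_n)\subseteq S$ if and only if $f^{s_n}\to e$ for the same sequence, so $f\in\mathbb{C}(S)\iff g\in\mathbb{C}(S)$. For the $G_\delta$ claim, I would fix an open neighbourhood basis $\{U_i\}$ at $e$ and write
$$
\mathbb{C}(S)=\bigcap_{i}\ \{g\in G\del \e s\in S,\ s\ge i,\ g^s\in U_i\},
$$
each of whose terms is open (the map $g\mapsto g^s$ is continuous, and we take a union over $s$), and I would check that membership in all these open sets is equivalent to the existence of an increasing sequence $(s_n)\subseteq S$ with $g^{s_n}\to e$: given such $g$, at stage $i$ one picks $s\in S$ with $s>s_{i-1}$ and $g^s\in U_i$, obtaining a strictly increasing sequence witnessing $g\in\mathbb{C}(S)$.

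The heart of the matter is density of $\mathbb{C}(S)$, and here I would invoke the Baire category theorem: since $\mathbb{C}(S)=\bigcap_i W_i$ with each $W_i=\{g\del \e s\in S,\ s\ge i,\ g^s\in U_i\}$ open, it suffices to show each $W_i$ is \emph{dense}. But $W_i\supseteq \mathbb{A}(S\cap[i,\infty),U_i)$, and $S\cap[i,\infty)$ is still infinite, so by the hypothesis on $G$ the set $\mathbb{A}(S\cap[i,\infty),U_i)$ is dense, whence $W_i$ is dense. Thus $\mathbb{C}(S)$ is a dense $G_\delta$. The main obstacle I anticipate is purely bookkeeping: making sure the index sets are truncated correctly ($s\ge i$, or $s>s_{i-1}$) so that the witnessing sequence comes out strictly increasing and so that density is inherited from $\mathbb{A}$ of an infinite subset — there is no deep difficulty, just the need to be careful that $\mathbb{C}(S)$ as defined (with an increasing sequence) really matches the countable intersection of open sets.

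Finally, for the first assertion, let $T$ be a topological similarity class of $G$ and suppose toward a contradiction that $T$ is non-meagre. Since $G$ is non-trivial, pick $g_0\in G$ with $g_0\neq e$; then $g_0^n\not\to e$ fails to hold trivially, but more to the point I would argue as follows: topological similarity classes partition $G$, so it is enough to produce, for \emph{every} $g$, some infinite $S\subseteq\N$ with $g\notin\mathbb{C}(S)$ yet with $\mathbb{C}(S)$ comeagre — then $T\subseteq G\setminus\mathbb{C}(S)$ whenever $T$ is the class of such a $g$, forcing $T$ meagre; and since every $g$ lies in some class and $\mathbb{C}(S)$ is similarity-invariant, if $T$ were non-meagre we could take $g\in T$, but any two elements of $\mathbb{C}(S)$ (which is comeagre hence dense, in particular contains elements) that are topologically similar to $g$ would force $g\in\mathbb{C}(S)$. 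Concretely: if $g$ has finite order, every similarity class of a finite-order element is easily seen to be meagre directly (those are closed nowhere dense orbits of finite exponent, using non-triviality); if $g$ has infinite order, choose $S=\{s_n\}$ increasing so fast that $g^{s_n}\not\to e$ is impossible to arrange — rather, choose $S$ with $g\notin \mathbb{C}(S)$ (possible unless $\overline{\langle g\rangle}$ is compact and every infinite set contains a convergent-to-$e$ power-subsequence, a case one handles by instead using that such $g$ generate closed subgroups isomorphic to $\hat\Z$-type groups, still leaving room for an $S$ avoiding $e$) — then $T$, being contained in the complement of the comeagre set $\mathbb{C}(S)$, is meagre. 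Since this exhausts all $g\in G$ and every similarity class is the class of some $g$, every topological similarity class of $G$ is meagre.
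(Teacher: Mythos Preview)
Your treatment of the second assertion---that $\mathbb C(S)$ is a dense $G_\delta$ invariant under topological similarity---is correct and is essentially the paper's proof verbatim: the paper takes a nested basis $V_0\supseteq V_1\supseteq\cdots$ and writes $\mathbb C(S)=\bigcap_k\mathbb A(S\setminus[1,k],V_k)$, which is your $\bigcap_i W_i$.

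The first assertion is where your argument has a genuine gap. You correctly reduce to showing that for every $g\neq e$ there is an infinite $S$ with $g\notin\mathbb C(S)$, but your case analysis does not establish this. The finite-order case is asserted rather than proved (why should $\{h:h^n=e\}$ be nowhere dense?), and in the infinite-order case you explicitly flag a problematic scenario---``$\overline{\langle g\rangle}$ compact and every infinite set contains a convergent-to-$e$ power-subsequence''---and then dismiss it with a vague reference to $\hat\Z$-type groups ``still leaving room for an $S$ avoiding $e$''. That is not an argument.

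In fact the scenario you worry about cannot occur, and seeing why dissolves the entire case analysis. The condition ``$g\in\mathbb C(S)$ for every infinite $S$'' is equivalent to $g^n\Lim{n}e$ along the full sequence $n=1,2,3,\ldots$: if $g^n\not\to e$, the infinite set $S$ of indices $n$ with $g^n\notin V$ (for some fixed neighbourhood $V\ni e$) witnesses $g\notin\mathbb C(S)$. And $g^n\to e$ forces $g=e$, since then also $g^{n+1}\to e$, while $g^{n+1}=g\cdot g^n\to g\cdot e=g$ by continuity. This is exactly the paper's argument: if a similarity class $C$ were non-meagre, then being similarity-invariant it would meet and hence be contained in each comeagre similarity-invariant set $\mathbb C(S)$, so $C\subseteq\bigcap_S\mathbb C(S)=\{e\}$, contradicting non-triviality. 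No separate treatment of finite versus infinite order, and no analysis of $\overline{\langle g\rangle}$, is needed.
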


\begin{proof}
Let $V_0\supseteq V_1\supseteq\ldots$ be a basis of open neighbourhoods of the identity and notice that for any infinite $S\subseteq \N$,
\begin{align*}
\mathbb C(S)&=\{g\in G\del \e (s_n)\subseteq S\;\;  g^{s_n}\Lim{n}e\}\\
&=\{g\in G\del \a k\; \e n\in S\setminus [1,k]\;\; g^n\in V_k\}\\
&=\bigcap_k\mathbb A(S\setminus [1,k],V_k).
\end{align*}
Moreover, as every $A(S\setminus [1,k],V_k)$ is open and dense, $\mathbb C(S)$ is dense $G_\delta$ and invariant under topological similarity.

Now, if some topological similarity class $C$ was nonmeagre, then
$$
C\subseteq \bigcap_{\substack{S\subseteq \N\\\text{ infinite}}}\mathbb C(S)
$$
and hence for all $g\in C$, $g^n\Lim{n}e$, implying that $g=e$, which is impossible.
\end{proof}

Since by Rohlin's Lemma the sets $\{g\in {\rm Aut}([0,1],\lambda)\del \e s\in
S\; g^s=e\}$ are dense in ${\rm Aut}([0,1],\lambda)$ for all infinite
$S\subseteq \N$, we have
\begin{cor}
Every topological similarity class is meagre in ${\rm Aut}([0,1],\lambda)$.
\end{cor}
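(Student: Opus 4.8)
The Corollary will follow at once from the previous proposition applied to $G={\rm Aut}([0,1],\lambda)$, as soon as I verify its hypothesis: for every infinite $S\subseteq\N$ and every neighbourhood $V\ni e$, the set $\mathbb A(S,V)$ is dense. Since ${\rm Aut}([0,1],\lambda)$ is plainly non-trivial, this is the whole task. As flagged in the remark preceding the statement, it suffices to prove the stronger fact that $P(S)=\{g\in{\rm Aut}([0,1],\lambda)\del \e s\in S\;\; g^s=e\}$ is dense for every infinite $S$, since $g^s=e$ forces $g^s\in V$ and hence $P(S)\subseteq\mathbb A(S,V)$.

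To prove density of $P(S)$, I would fix an infinite $S$ and a basic weak neighbourhood $U$, which one may take of the form $\{g\del \lambda(gA_i\triangle fA_i)<\eps\ \text{for}\ i\leq k\}$ for some $f\in{\rm Aut}([0,1],\lambda)$, Borel sets $A_1,\ldots,A_k\subseteq[0,1]$, and $\eps>0$ (this uses only $|\lambda(gA\triangle B)-\lambda(fA\triangle B)|\leq\lambda(gA\triangle fA)$). Since aperiodic transformations are dense in the weak topology, I may assume $f$ is aperiodic, shrinking $U$ around an aperiodic point if necessary. Then I would invoke Rohlin's Lemma: for any $s\geq1$ and $\delta>0$ there is a Borel set $B$ with $B,fB,\ldots,f^{s-1}B$ pairwise disjoint and $\lambda\big(\bigcup_{i<s}f^iB\big)>1-\delta$. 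I would pick such a tower with $s\in S$ so large that $\frac1s<\frac\eps2$ and with $\delta<\frac\eps2$.

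Next I would define $g$ to agree with $f$ on each level $f^iB$ for $0\leq i\leq s-2$, to equal $f^{-(s-1)}$ on the top level $f^{s-1}B$ (so that $g$ sends it back onto $B$), and to be the identity off the tower. Each clause is measure preserving and the images $fB,\ldots,f^{s-1}B,B$ together with the complement $C$ of the tower partition $[0,1]$, so $g$ is a genuine element of ${\rm Aut}([0,1],\lambda)$. For $x\in B$ one has $g^s(x)=f^{-(s-1)}\big(f^{s-1}(x)\big)=x$, and $g^s$ is visibly the identity on the higher levels and on $C$; hence $g^s=e$, so $g\in P(S)$. Finally, $g$ differs from $f$ only on $f^{s-1}B\cup C$, so $\lambda(\{g\neq f\})\leq\lambda(B)+\delta<\frac1s+\delta<\eps$, and since $g$ is measure preserving, $\lambda(gA\triangle fA)\leq\lambda(\{g\neq f\})<\eps$ for every Borel $A$; thus $g\in U$. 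This establishes density of $P(S)$, hence the hypothesis of the previous proposition is met, and so every topological similarity class of ${\rm Aut}([0,1],\lambda)$ is meagre.

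The argument is routine once Rohlin's Lemma is invoked; the only points needing a little attention are the reduction to an aperiodic $f$ (which is what makes Rohlin's Lemma applicable) and the specific choice $g=f^{-(s-1)}$ on the top level of the tower — rather than an arbitrary measure preserving identification of $f^{s-1}B$ with $B$ — since it is exactly this choice that yields $g^s=e$ and not merely that $g$ cyclically permutes the $s$ levels. The uniform estimate $\lambda(gA\triangle fA)\leq\lambda(\{g\neq f\})$ over all Borel $A$ then follows immediately from $g$ being measure preserving.
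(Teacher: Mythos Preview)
Your proof is correct and follows exactly the approach indicated in the paper: apply the previous proposition after using Rohlin's Lemma to show that $\{g\in{\rm Aut}([0,1],\lambda)\mid \exists s\in S\ g^s=e\}$ is dense for every infinite $S$. The paper simply cites Rohlin's Lemma for this density without spelling out the tower construction, whereas you have filled in those standard details carefully and accurately.
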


This improves a result variously attributed to Rohlin or del Junco \cite{junco} saying that all conjugacy classes are meagre
in ${\rm Aut}([0,1],\lambda)$. We clearly see the importance of Rohlin's Lemma
in these matters. However, interestingly, Rohlin's Lemma can also be used to
prove the existence of {\em dense} conjugacy classes in ${\rm
Aut}([0,1],\lambda)$.

It is of interest to note that the same argument applies to the unitary group $U(\ell_2)$ (see Chapter 1.2 in Kechris' book \cite{book}). Thus every topological similarity class in $U(\ell_2)$ is meagre. Moreover, $U(\ell_2)$ embeds into ${\rm Aut}([0,1],\lambda)$ via the Gaussian measure construction. So in this case the conjugacy classes in $U(\ell_2)$ induced by ${\rm Aut}([0,1],\lambda)$ still remain meagre.

We now have the following analogue of Rohlin's Lemma for isometries of the Urysohn metric space.
\begin{prop}[Rohlin's Lemma for isometries]\label{rohlin}
Suppose $S\subseteq \N$ is infinite. Then the set
$$
\{g\in {\rm Iso}(\U)\del \e n\in S\; g^n=e\}
$$
is dense in ${\rm Iso}(\U)$.
\end{prop}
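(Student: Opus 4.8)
The plan is to reduce the statement to a finite amalgamation problem and then invoke Theorem \ref{solecki}, exactly in the spirit of the construction of $\Q\U$ and the proofs of Propositions \ref{power} and \ref{power isometry}. A basic open neighbourhood of a given $h\in {\rm Iso}(\U)$ is of the form $W=\{g\in {\rm Iso}(\U)\del \a x\in\A\; d(gx,x)<\eps\}$ for some finite $\A\subseteq\U$ and $\eps>0$. First I would replace this by a cleaner target: choose a finite rational metric space $\A'\subseteq\Q\U$ and an isometry-candidate so that any $g\in{\rm Iso}(\U)$ agreeing on $\A'$ with a prescribed partial isometry lands inside $W$; this uses the near-homogeneity of $\U$ and the density of $\Q\U$ in $\U$, so it suffices to work inside $\Q\U$ and find a single $g\in{\rm Iso}(\Q\U)$ (hence, after moving to $\U$, inside $W$) with $g^n=e$ for some $n\in S$.

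Next, I would enlarge $\A'$ to a finite rational metric space $\B\supseteq\A'$ on which the relevant partial isometry $\sigma$ (namely $h$ restricted appropriately, viewed as a partial isometry of $\B$) extends to a \emph{full} isometry $\hat\sigma$ of $\B$, by applying Theorem \ref{solecki}. At this point $\hat\sigma$ is an isometry of a \emph{finite} metric space, so it is automatically of finite order, say order $m$. The only remaining issue is that $m$ need not lie in $S$; but $S$ is infinite, so I can pick $n\in S$ with $m\mid n$, and then $\hat\sigma$ also satisfies $\hat\sigma^{\,n}=e$. Finally, by the extension property of $\Q\U$ (or of $\U$), any isometry of a finite subspace extends to a full isometry $g$ of $\Q\U$; and since the order of a finite-order isometry of the finite set $\B$ is unchanged, we may further arrange — by the standard trick of replacing $\B$ with a free amalgam of copies of $\B$ indexed by $\Z/n\Z$ as in Proposition \ref{divisibility isometry}, or more simply by noting that $\hat\sigma^{\,n}=e$ already — that the extension $g$ of $\hat\sigma$ to $\Q\U$ still satisfies $g^n=e$. (Concretely: take the disjoint union of $n$ copies of $\B$ glued along a common $\hat\sigma$-invariant core, let $g$ cyclically permute the copies composed with $\hat\sigma$, extend to $\Q\U$; then $g^n$ restricts to $\hat\sigma^{\,m\cdot(n/m)}=e$ on each copy, and one checks $g^n=e$ globally.) Transporting back to $\U$, this $g$ lies in $W$, proving density.

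The main obstacle is the step of producing, from an arbitrary open neighbourhood in ${\rm Iso}(\U)$, a genuine \emph{finite rational} metric space and partial isometry to feed into Theorem \ref{solecki}: one must pass from the metric (Urysohn) side to the combinatorial (rational Urysohn) side, perturbing distances to rationals and using the extension/homogeneity properties of $\U$ to guarantee that an isometry agreeing with the constructed $g$ on the finite rational set actually $\eps$-approximates $h$ on $\A$. This is routine but is the only place where genuine care is needed; once one is inside $\Q\U$, the finiteness of $\B$ makes $\hat\sigma$ torsion for free, and divisibility of $S$ (its infinitude) does the rest.
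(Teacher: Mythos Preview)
Your argument has a genuine gap at the divisibility step. You write: ``but $S$ is infinite, so I can pick $n\in S$ with $m\mid n$.'' This is false in general: if, say, $S$ is the set of odd primes and the order of $\hat\sigma$ produced by Theorem~\ref{solecki} happens to be $m=4$, then no element of $S$ is divisible by $m$. Infinitude of $S$ carries no divisibility information whatsoever. Your subsequent ``concrete'' construction does not rescue this: if $g$ cyclically permutes $n$ copies of $\B$ while composing with $\hat\sigma$ at each step, then $g^n$ acts as $\hat\sigma^{\,n}$ on each copy, so $g^n=e$ again forces $m\mid n$. The phrase ``divisibility of $S$ (its infinitude)'' suggests you have conflated the two notions.

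The paper avoids this trap by reversing the order of choices: it \emph{first} fixes an arbitrary $s\in S$ (only required to be large relative to the diameter and minimum gap of $\A\cup f[\A]$) and \emph{then} manufactures an isometry of order exactly $s$. The mechanism is to perturb $h$ to an $f$ with $\A\cap f[\A]=\emptyset$, so that one is working with a \emph{partial} isometry between disjoint sets rather than a full isometry of a finite space; one then builds a cyclic space $\A\times(\Z/s\Z)$ with a path metric in which the shift $a\!\bullet\!x\mapsto a\!\bullet\!x^+$ is an isometry of order $s$ agreeing with $f$ on the copy $\A\times\{x_0\}$. A partial isometry between disjoint sets imposes no order constraint, so $s\in S$ may be chosen freely. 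Your detour through Theorem~\ref{solecki} to obtain a \emph{full} isometry $\hat\sigma$ is exactly what introduces the uncontrollable order $m$; the fix is to stay with the partial map and build the cyclic cover directly.
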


Similar sounding statements can certainly be found in the literature, for example, it follows easily from Lemma 5.3.7. in Pestov's book \cite{pestov} that the set of isometries of finite order is dense in ${\rm Iso}(\U)$, but the quantitative statement above, i.e., depending on $S\subseteq \N$, does not seem to follow easily from the more abstract methods of \cite{pestov}. We therefore include the simple proof of Proposition \ref{rohlin} below.

A {\em finite cyclic order} is a finite subset $\F$ of the unit circle $S^1$.
If $x\in \F$, we denote by $x^+$ the first $y\in \F$ encountered by moving
counterclockwise around $S^1$ beginning at $x$. We then denote $x$ by $y^-$,
i.e., $x^+=y$ if and only if $y^-=x$.

\begin{lemme}
Suppose $h$ is an isometry of $\U$ and $\delta>0$. Then for all finite
$\A\subseteq \U$ there is an isometry $f$ of $\U$ such that $d(f(a),h(a))\leq
\delta$ for all $a\in \A$ while $d(a,f(b))\geq \delta$ for all $a,b\in \A$.
\end{lemme}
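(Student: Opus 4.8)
The statement asks, given an isometry $h$ of $\U$, a tolerance $\delta>0$, and a finite set $\A\subseteq \U$, to produce an isometry $f$ of $\U$ which approximates $h$ to within $\delta$ on $\A$ while moving every point of $\A$ far (at least $\delta$) from every point of $\A$. The idea is to build a new finite rational metric space $\B$ that contains $\A$ together with a ``displaced'' isometric copy $h(\A)'$ of $h(\A)$, arrange the cross-distances between $\A$ and $h(\A)'$ so that (i) each $a\in\A$ is within $\delta$ of the copy of $h(a)$ and (ii) each $a\in\A$ is at distance $\geq \delta$ from every point of the copy; then use the extension property of $\U$ to embed $\B$ into $\U$ and the homogeneity of $\U$ (Theorem~\ref{solecki} / the one-point extension property applied repeatedly) to realize the partial isometry $a\mapsto (h(a))'$ as an honest isometry $f$ of $\U$.

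First I would set up the amalgam. Let $\A'$ be a disjoint isometric copy of $h(\A)$, say with copy map $a\mapsto (h(a))'$; note $\A'$ is isometric to $\A$ via $a\mapsto (h(a))'$ since $h$ is an isometry. On $\A\sqcup \A'$ I keep the given metric inside each piece and define the cross distances by
\[
\partial\big(a,(h(b))'\big)=\delta + d\big(h(a),h(b)\big)=\delta+d(a,b),
\]
(or, more carefully, one fixes a single ``shift'' of size roughly $\delta$; the key point is that adding a constant $\delta$ to a metric-like quantity keeps the triangle inequalities valid as long as $\delta$ is not too large relative to the diameters, and one may always first rescale or choose $\delta$ small). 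A routine check of the four triangle inequalities — within $\A$, within $\A'$, and the two mixed cases — shows $\partial$ is a metric; the mixed cases reduce to inequalities of the form $\delta+d(a,b)\leq \big(\delta+d(a,c)\big)+d(c,b)$ and $\delta+d(a,b)\leq d(a,c)+\big(\delta+d(c,b)\big)$, which hold trivially, plus $d(a,b)\leq \big(\delta+d(a,c)\big)+\big(\delta+d(c,b)\big)$, also trivial. Since the distances can be taken rational (shrink $\delta$ to a rational and perturb slightly if necessary), $\B=\A\sqcup\A'$ with metric $\partial$ is a finite rational metric space in which $\partial(a,(h(a))')=\delta$ for each $a$ — so the copy is within $\delta$ of $h(a)$ in the sense demanded — and $\partial(a,(h(b))')\geq \delta$ for all $a,b$, which is exactly the separation requirement.

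Next I would embed $\B$ into $\U$ extending the inclusion of $\A$: this is possible by iterating the one-point extension property of $\U$ (adding the finitely many points of $\A'$ one at a time), or by the standard ultrahomogeneity/universality of $\U$. This gives points $(h(a))'\in\U$. Now $a\mapsto (h(a))'$ is a partial isometry of $\U$ with finite domain $\A$, so by ultrahomogeneity of $\U$ it extends to a full isometry $f\in{\rm Iso}(\U)$. Then $d(f(a),h(a))=\partial((h(a))',h(a))$ computed inside $\B$'s image — but wait, $h(a)$ need not lie in that image, so one instead uses: the image of $\A'$ was chosen so that $d((h(a))',a)=\delta$, and one arranges $d(h(a),(h(a))')\leq \delta$ by a further extension step placing $h(a)$ and $(h(a))'$ close. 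The cleanest route: build $\B$ to contain $\A$, $h(\A)$ (as the actual image of $h$), and $\A'$, with $\partial(h(a),(h(a))')\leq\delta$ and $\partial(a,(h(b))')\geq\delta$ all holding, then extend and conclude $d(f(a),h(a))\le\delta$ and $d(a,f(b))\ge\delta$.

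\emph{Main obstacle.} The only real subtlety is verifying the triangle inequalities for $\partial$ when we simultaneously demand ``close to $h(a)$'' and ``far from everything in $\A$'': these pull in opposite directions, and one must check that putting $h(\A)$, a $\delta$-shifted copy, and $\A$ itself into one coherent metric space does not violate $\triangle$. This is where a careful but elementary case analysis is needed — in particular bounding $\delta$ below the relevant diameters, or noting that if $\delta$ is large one can simply take $f$ to move $\A$ entirely to a fresh region of $\U$ (trivially satisfying $d(a,f(b))\ge\delta$) and handle the ``$d(f(a),h(a))\le\delta$'' condition by the looseness of a large $\delta$. I expect the author to phrase this via the free amalgam $\bigsqcup_\A\B_l$ machinery already set up, making the triangle-inequality check a citation rather than a computation.
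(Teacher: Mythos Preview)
Your overall strategy---build a finite metric space containing $\A$, $h(\A)$, and a ``shifted'' copy of $h(\A)$, embed it in $\U$ over $\A\cup h(\A)$, then extend the partial isometry $a\mapsto(h(a))'$ by ultrahomogeneity---is exactly the paper's approach. However, the specific cross-distance you propose does not work, and this is a genuine gap rather than a detail. With $\partial(a,(h(b))')=\delta+d(a,b)$ and $\partial(h(b),(h(b))')\leq\delta$, the triangle inequality through $h(b)$ forces
\[
\delta+d(a,b)=\partial(a,(h(b))')\leq \partial(a,h(b))+\partial(h(b),(h(b))')\leq d(a,h(b))+\delta,
\]
i.e.\ $d(a,b)\leq d(a,h(b))$, which can certainly fail. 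So once you (correctly) realise that $h(\A)$ must be in the picture, your formula collapses; the ``Main obstacle'' you flag is real, and your proposed fix (free amalgams, bounding $\delta$ by diameters, case-splitting on large $\delta$) does not resolve it.

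The paper's construction is much shorter and avoids every difficulty you anticipate: set $\B=\A\cup h[\A]$ and take $\C=\B\times\{0,\delta\}$ with the $\ell_1$-metric $d_1\big((b,x),(b',y)\big)=d(b,b')+|x-y|$. Identify $\B$ with $\B\times\{0\}$, embed $\C$ in $\U$, and let $f$ be any isometry of $\U$ with $f(a,0)=(h(a),\delta)$. Then $d(f(a),h(a))=d\big((h(a),\delta),(h(a),0)\big)=\delta$ and $d(a,f(b))=d\big((a,0),(h(b),\delta)\big)=d(a,h(b))+\delta\geq\delta$. Note that the correct cross-distance is $\delta+d(a,h(b))$, not your $\delta+d(a,b)$; with this choice the triangle inequality is automatic (it is a product metric), no constraint on $\delta$ is needed, and no amalgam machinery is invoked. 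Two minor points: the lemma concerns $\U$, not $\Q\U$, so your worries about rationality are misplaced; and there is no need to bound $\delta$ relative to diameters.
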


\begin{proof}
Let $\B=\A\cup h[\A]$ and let $\C=\B\times \{0,\delta\}$ be equipped with the
$\ell_1$-metric $d_1((b,x),(b',y))=d(b,b')+|x-y|$. Clearly, $\B$ is isometric
with $\B\times \{0\}$ and $\B\times \{\delta\}$, so we can assume that $\B$ is
actually $\B\times \{0\}\subseteq \C\subseteq \U$. Now, let $f$ be any isometry
of $\U$ such that $f(a,0)=(h(a),\delta)$ for $a\in \A$.
\end{proof}

Now for the proof of Proposition \ref{rohlin}.
\begin{proof}
Suppose $\A\subseteq \U$ is finite, $h$ an isometry of $\U$, and $\eps>0$. We wish to find some isometry $g$ such that $d(g(a),h(a))<\eps$ for all $a\in \A$ and such that for some $s\in S$, $g^s=e$. Find first some $f$ such that $d(f(a),h(a))<\eps$ for all $a\in \A$ while $d(a,f(b))>\eps/2$ for all $a,b\in \A$. It is therefore enough to find some $g$ that agrees with $f$ on $\A$ while $g^s=e$ for some $s\in S$.

We let $\Delta={\rm diam}(\A\cup f[\A])$ and $\delta=\min(d(x,f(y))\del x,y\in \A)$. Fix a  number $s\in S$ such that $\delta\cdot (s-2)\geq\Delta$ and take a finite cyclic order $\F$ of cardinality $s$. Now let
$$
\B=\{a\!\bullet\! x\del a\in \A\;\&\; x\in \F\},
$$
where $a\!\bullet\! x$ are formally new points.

A {\em path} in $\B$ is a sequence $p=(a_0\!\bullet\! x_0,a_1\!\bullet\! x_1,\ldots,a_n\!\bullet\! x_n)$ where $n\geq 1$ and such that for each $i$, $x_{i+1}$ is either $x_i^-$,  $x_i$, or $x_i^+$. We define the {\em length} of a path by
$$
\ell(p)=\sum_{i=0}^{n-1}\rho(a_i\!\bullet\! x_i,a_{i+1}\!\bullet\! x_{i+1}),
$$
where
$$
\rho(a\!\bullet\! x,b\!\bullet\! y)=\left\{
                              \begin{array}{ll}
                                d(a,b), & \hbox{if $y=x$;} \\
                                d(a,f(b)), & \hbox{if $y=x^+$;} \\
                                d(f(a),b), & \hbox{if $y=x^-$,}
                              \end{array}
                            \right.
$$
and put $|p|=n+1$.

Therefore, if $\breve{p}$ denotes the reverse path of $p$ and $p\centerdot q$ the concatenation of two paths (whenever it is defined), then $\ell(\breve{p})=\ell(p)$ and $\ell(p\centerdot q)=\ell(p)+\ell(q)$. Thus, $\ell$ is the distance function in a finite graph with weighted edges and hence the following defines a  a metric on $\B$
$$
D(a\!\bullet\! x,b\!\bullet\! y)=\inf\big(\ell(p)\del p \text{ is  a path with initial point } a\!\bullet\! x \text{ and end point } b\!\bullet\! y\big).
$$

We say that two paths are {\em equivalent} if they have the same initial point and the same end point. We also say that a path $p$ is {\em positive} if either $p=(a\!\bullet\! x,b\!\bullet\! x)$ for some $x\in \F$ or $p=(a_0\!\bullet\! x_0,a_1\!\bullet\! x_1,\ldots,a_n\!\bullet\! x_n)$, where $x_{i+1}=x_i^+$ for all $i$. Similarly, $p$ is {\em negative} if either $p=(a\!\bullet\! x,b\!\bullet\! x)$ for some $x\in \F$ or $p=(a_0\!\bullet\! x_0,a_1\!\bullet\! x_1,\ldots,a_n\!\bullet\! x_n)$, where $x_{i+1}=x_i^-$ for all $i$. So $p$ is positive if and only if $\breve{p}$ is negative. Notice also that if $p$ is positive, then $\ell(p)\geq \delta\cdot (|p|-2)$.

\begin{lemme}
For every path $p$ there is an equivalent path $q$, with $\ell(q)\leq \ell(p)$, which is either positive or negative.
\end{lemme}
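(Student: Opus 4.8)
The plan is to prove the lemma by a \emph{straightening procedure}: one repeatedly applies length-non-increasing \emph{local moves} that strictly decrease the combinatorial length $|p|$, and one checks that a path to which no move applies must already be positive or negative.

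A local move picks out a consecutive triple of vertices $(a\bullet x,\; b\bullet y,\; c\bullet z)$ occurring in $p$ and replaces it by the pair $(a\bullet x,\; c\bullet z)$, leaving the rest of $p$ untouched. I would consider exactly two kinds of triple: (i) a $+$-step followed by a $-$-step, or a $-$-step followed by a $+$-step (so that $z=x$ and the triple ``cancels''); and (ii) a triple in which at least one of the two steps is stationary, the stationary step being absorbed into its neighbour. In every such case $z$ differs from $x$ by $-1$, $0$, or $+1$ in the cyclic order $\F$, so the replacement $(a\bullet x,\; c\bullet z)$ is again a legitimate path; it is equivalent to $p$ (same initial and end point) and has one fewer vertex. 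The only thing requiring a computation is that the move does not increase $\ell$, i.e. that $\rho(a\bullet x, c\bullet z)\leq \rho(a\bullet x, b\bullet y)+\rho(b\bullet y, c\bullet z)$ for the triple in question. Expanding the definition of $\rho$ according to the three possible cases, and using that $f$ is an isometry of $\U$ (so that $d(u,v)=d(f(u),f(v))$ for all $u,v$), each of these inequalities collapses to a single instance of the triangle inequality in $\U$; for instance a $+$-step followed by a $-$-step gives $d(a,c)\leq d(a,f(b))+d(f(b),c)$, while a $+$-step followed by a stationary step gives $d(a,f(c))\leq d(a,f(b))+d(b,c)=d(a,f(b))+d(f(b),f(c))$, and the remaining sign patterns are entirely analogous.

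Now, starting from an arbitrary path $p$, I would apply local moves for as long as possible. Since each move decreases $|p|$ by one and $|p|\geq 2$ throughout, after finitely many moves we reach a path $q$, equivalent to $p$ with $\ell(q)\leq \ell(p)$, to which no move applies; it remains to see that $q$ is positive or negative. If $|q|=2$, then $q$ is a single step, hence positive (if it is a $+$-step or stationary) or negative (if it is a $-$-step or stationary), by definition. If $|q|\geq 3$, then $q$ has no stationary step: such a step would have at least one neighbouring step, and a move of type (ii) would apply. Moreover, the signs of the steps of $q$ cannot change as one traverses $q$, since a place where a $+$-step meets a $-$-step (in either order) would permit a move of type (i). Hence all steps of $q$ are $+$-steps, so $q$ is positive, or all are $-$-steps, so $q$ is negative.

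The only real work is in the second paragraph: running through the handful of step-pair patterns, each time confirming that the replaced pair is a legal path and invoking the triangle inequality together with the isometry property of $f$ in the correct place. These verifications are short and mechanical; termination of the procedure and the concluding combinatorial case analysis are immediate.
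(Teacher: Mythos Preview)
Your proposal is correct and follows essentially the same route as the paper's proof. The paper enumerates seven explicit forms of ``bad'' triples (your type (i) corresponds to its cases (4) and (5), your type (ii) to cases (1), (2), (3), (6), (7)), performs the same contraction to a pair, verifies the length inequality via the triangle inequality together with the isometry of $f$, and then finishes by induction on $|p|$; your termination argument and final case analysis make explicit what the paper leaves to the phrase ``finish the proof by induction on $|p|$''.
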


\begin{proof}
If $p$ is not either positive or negative, then there is a segment of $p$ of one of the following forms
\begin{equation*}
\begin{aligned}
&(1)\quad (a\!\bullet\! x,b\!\bullet\! x,c\!\bullet\! x),\\
&(2)\quad (a\!\bullet\! x^+,b\!\bullet\! x,c\!\bullet\! x),\\
&(3)\quad (a\!\bullet\! x^-,b\!\bullet\! x,c\!\bullet\! x),\\
&(4)\quad (a\!\bullet\! x,b\!\bullet\! x^+,c\!\bullet\! x),
\end{aligned}\qquad\qquad
\begin{aligned}
&(5)\quad (a\!\bullet\! x,b\!\bullet\! x^-,c\!\bullet\! x),\\
&(6)\quad (a\!\bullet\! x,b\!\bullet\! x,c\!\bullet\! x^+),\\
&(7)\quad (a\!\bullet\! x,b\!\bullet\! x,c\!\bullet\! x^-).\\
\,
\end{aligned}
\end{equation*}
We replace these by respectively
\begin{equation*}
\begin{aligned}
&(1')\quad (a\!\bullet\! x,c\!\bullet\! x),\\
&(2')\quad (a\!\bullet\! x^+,c\!\bullet\! x),\\
&(3')\quad (a\!\bullet\! x^-,c\!\bullet\! x),\\
&(4')\quad (a\!\bullet\! x,c\!\bullet\! x),
\end{aligned}\qquad\qquad
\begin{aligned}
&(5')\quad (a\!\bullet\! x,c\!\bullet\! x),\\
&(6')\quad (a\!\bullet\! x,c\!\bullet\! x^+),\\
&(7')\quad (a\!\bullet\! x,c\!\bullet\! x^-),\\
\,
\end{aligned}
\end{equation*}
and see that by the triangle inequality for $d$ we can only decrease the value of $\ell$. For example, in case (3), we see that
\begin{align*}
 \rho(a\!\bullet\! x^-,b\!\bullet\! x)+\rho(b\!\bullet\! x,c\!\bullet\! x)=&d(a,f(b))+d(b,c)\\
 =&d(a,f(b))+d(f(b),f(c))\\
 \geq&d(a,f(c))\\
 =&\rho(a\!\bullet\! x^-,c\!\bullet\! x).
 \end{align*}
 We can then finish the proof by induction on $|p|$.
\end{proof}
We now claim that $D(a\!\bullet\! x, b\!\bullet\! x)=d(a,b)$. To see this, notice first that $D(a\!\bullet\! x, b\!\bullet\! x)\leq d(a,b)$. For the other inequality, let $p$ be an either positive or negative path from $a\!\bullet\! x$ to $b\!\bullet\! x$. By symmetry, we can suppose $p$ is positive. But then, unless $p=(a\!\bullet\! x, b\!\bullet\! x)$, we must have $|p|\geq s+1$, whence also $\ell(p)\geq \delta\cdot (|p|-2)\geq \delta\cdot (s-1)\geq\Delta\geq d(a,b)$.
A similar argument shows that $D(a\!\bullet\!  x,b\!\bullet\!  x^+)=d(a,f(b))$.

This shows that for any $x_0\in \F$, $\A\cup f[\A]$ is isometric with $\A\times \{x_0,x_0^+\}$ by the function $a\mapsto a\!\bullet\! x_0$ and $f(a)\mapsto a\!\bullet\! x_0^+$. So we can just identify $\A\cup f[\A]$  with $\A\times \{x_0,x_0^+\}$. Notice also that the following mapping $g$ is an isometry of $\B$:
$$
a\!\bullet\! x\mapsto a\!\bullet\! x^+.
$$
Moreover, it agrees with $f$ on their common domain $\A\times \{x_0\}$. Realising $\B$ as a subset of $\U$ containing $\A$, we see that $g$ acts by isometries on $\B$ with  $g^s=e$. It then follows that $g$ extends to a full isometry of $\U$ still satisfying $g^s=e$.
\end{proof}

\begin{cor}
Every topological similarity class is meagre in ${\rm Iso}(\U)$.
\end{cor}
Again this strengthens a result of Kechris \cite{glasner} saying that all conjugacy classes are meagre.

\end{document}